\newtheorem{lemma}{Lemma}[section]
\newtheorem{theorem}[lemma]{Theorem}
\newtheorem*{theorem*}{Theorem}
\newtheorem{corollary}[lemma]{Corollary}
\newtheorem{proposition}[lemma]{Proposition}
\newtheorem*{proposition*}{Proposition}
\newtheorem{problem}{Problem}
\newtheorem*{problem*}{Problem}
\theoremstyle{definition}
\newtheorem*{claim*}{Claim}
\newtheorem*{definition}{Definition}
\newtheorem*{remark}{Remark}
\newtheorem*{remarks}{Remarks}
\DeclareMathOperator{\Dlim}{D-lim}
\newcommand{\lE}{\mathbb{E}^{\log}}
\newcommand{\C}{{\mathbb C}}
\newcommand{\E}{{\mathbb E}}
\newcommand{\D}{{\mathbb D}}
\newcommand{\N}{{\mathbb N}}
\renewcommand{\P}{{\mathbb P}}
\newcommand{\R}{{\mathbb R}}
\renewcommand{\S}{\mathbb{S}}
\newcommand{\T}{{\mathbb T}}
\newcommand{\Z}{{\mathbb Z}}
\newcommand{\U}{{\mathbb U}}
\newcommand{\CX}{{\mathcal X}}
\newcommand{\CZ}{{\mathcal Z}}
\newcommand{\bM}{{\mathbf{M}}}
\newcommand{\wh}{\widehat}
\newcommand{\norm}[1]{\left\Vert #1\right\Vert}
\DeclareMathOperator{\id}{id}
\DeclareMathOperator{\reel}{Re}
\renewcommand{\Re}{\reel}
\begin{document}

\title[Correlations of multiplicative functions]{Correlations of multiplicative functions along special sequences}
\title[Correlations of multiplicative functions]{Correlations of multiplicative functions along deterministic and independent sequences}
\thanks{The author was supported  by the
	 Hellenic Foundation for
	Research and Innovation, 
	Project
	No: 1684.}

\author{Nikos Frantzikinakis}
\address[Nikos Frantzikinakis]{University of Crete, Department of mathematics and applied mathematics, Voutes University Campus, Heraklion 71003, Greece} \email{frantzikinakis@gmail.com}
\begin{abstract}
We study correlations of multiplicative functions taken along deterministic sequences and sequences that satisfy certain linear independence assumptions. The results obtained extend recent results of Tao and  Ter\"av\"ainen and results of the author. Our approach is to use tools from ergodic theory in order to effectively exploit feedback from analytic number theory. The results on deterministic sequences crucially use structural properties of measure preserving systems associated with bounded multiplicative functions that were recently obtained by the author and Host. The results on independent sequences  depend on multiple ergodic theorems obtained using the theory of characteristic factors and qualitative equidistribution results on nilmanifolds.
\end{abstract}


\subjclass[2010]{Primary: 11N37, 37A45; Secondary:    11K65.}

\keywords{Multiplicative functions, Liouville function,  Chowla conjecture,
	Elliott conjecture, Furstenberg correspondence.}

\maketitle

\section{Introduction and main results}\label{S:MainResults}
Let $\lambda\colon \N\to \{-1,+1\}$ be the Liouville function that  is defined to be $1$ on integers with an even number of prime factors, counted with  multiplicity, and $-1$ elsewhere. Its values are expected to be randomly distributed and
based on this several conjectures have been formulated. One such conjecture, by Chowla \cite{Ch65}, asserts that the values of $\lambda$ form a normal sequence of $\pm 1$. Equivalently,
if  $n_1,\ldots, n_\ell\in \N$ are distinct, then (see Section~\ref{SS:notation} for our notation on averages)
$$
\E_{m\in\N} \,  \lambda(m+n_1)\cdots \lambda(m+n_\ell)=0.
$$
The conjecture  is settled for $\ell=1$, this is elementarily equivalent to the prime number theorem, and remains open for  $\ell\geq 2$.
Recently, a  version involving logarithmic averages
was established for $\ell=2$ by Tao \cite{T15} and for all odd values of $\ell$ by
Tao and  Ter\"av\"ainen  \cite{TT18b,TT18}. Similar results are also known for Ces\`aro averages on almost all scales~\cite{TT18c}. But even in its logarithmic form, the conjecture  remains open for all even  $\ell\geq 4$.

It is also expected that if $a\colon \N\to \N$ is low complexity strictly increasing sequence, 
 then the
sequence $\lambda\circ a$ inherits the randomness properties of $\lambda$. It  is indeed a classical result  of Kamae~\cite{K73} and Weiss~\cite{W71}, proved in  the 70's,
that normality of a sequence is preserved by composition with a deterministic sequence.
But since normality of the
 Liouville function is  unknown,  it is unclear how to extend  known results about correlations of  $\lambda$ to results about $\lambda\circ a$. Our
first goal is to solve this problem for a large class of deterministic sequences.
 In particular, it is  a consequence of Theorem~\ref{T:deterministic} below,  that if the sequence  $a\colon \N\to \N$ is  deterministic and totally ergodic  (for example,  take $a(n)=[n\alpha+\beta]$ where $\alpha>1$ is  irrational and $\beta\in\R$), then for  $\ell=2$ and for all   odd $\ell$ we have for all distinct  $n_1,\ldots, n_\ell\in \N$ that
$$
\lE_{m\in\N}\, \lambda(a(m+n_1))\cdots \lambda(a(m+n_\ell))=0.
$$

On a slightly different direction, Matom\"aki, Radziwi{\l}{\l},  and Tao \cite{MRT15} established an averaged version of the Chowla conjecture, implying that if $\bM:=(M_k)_{k\in\N}$, with $M_k\to\infty$, is  such that  all limits $\E_{m\in{\bM}}$ below exist,  then
\begin{equation}\label{E:Chowla}
\lim_{N\to\infty}\E_{n_1,\ldots,n_\ell\in [N]} \big|\E_{m\in\bM}\, \lambda(m) \, \lambda(m+n_1)\cdots \lambda(m+n_\ell)\big|=0.
\end{equation}
In \cite{F17} this result was   extended
  to shifts given by  
  arbitrary
   linearly independent polynomials $p_1,\ldots, p_\ell \colon \N^r \to \Z$ with zero constant terms.
Our second goal is to show that if we replace in \eqref{E:Chowla} the average $\E_{m\in\bM}$ with  a
  logarithmic average,  then we can establish results for vastly more general
classes of shifts and we can also replace the limit in density with a regular limit.
For instance, it follows from Theorem~\ref{T:indaper''}  that
if $S$ is a subset of $\N^\ell$ with  independent elements (see definition in Section~\ref{SS:independent}), then
$$
\lim_{|n|\to\infty, n\in S}\big(\lE_{m\in\bM}\, \lambda(m)\,  \lambda(m+n_1)\cdots  \lambda(m+n_\ell)\big)=0
$$
where $n_1,\ldots, n_\ell$ denote the coordinates of  $n\in \N^\ell$.
One corollary of this result is Theorem~\ref{T:indaper}, which implies
 that
if  the sequences  $a_1,\ldots, a_\ell \colon \N \to \N$  have different growth rates and $\bM:=(M_k)_{k\in\N}$, with $M_k\to\infty$, is  such that  all limits $\lE_{m\in{\bM}}$ below exist, then
$$
\lim_{n\to\infty}\big(\lE_{m\in\bM}\, \lambda(m)\,  \lambda(m+a_1(n))\cdots  \lambda(m+a_\ell(n))\big)=0.
$$
 Moreover,
we show that the previous results apply to arbitrary collections of multiplicative functions with values on the complex unit disc  as long as at least one of them satisfies some
aperiodicity assumptions.

 Our last goal is to establish  related results for  correlations of arbitrary multiplicative functions   $f_1,\ldots, f_\ell\colon \N\to[-1,1]$. It follows from  Theorem~\ref{T:indequi} below that if $\alpha_1,\ldots, \alpha_\ell\in \R$ are rationally independent, and   $\bM$ is as before, then
$$
\E_{n\in\N} \big(\lE_{m\in\bM}\,  f_1(m+[n\alpha_1])\cdots  f_\ell(m+[n\alpha_\ell])\big)= \lE_{m\in\bM}f_1(m) \cdots  \lE_{m\in\bM}f_\ell(m).
$$

Note that this identity is no longer true if we replace the average $\E_{n\in\N}$ with a limit or a limit in density, or if the multiplicative functions $f_1,\ldots, f_\ell$ take values on the complex unit disc. On the other hand, we show that the shifts can be replaced by arbitrary collections of sequences that satisfy some linear independence and equidistribution assumptions.

\subsection{Definitions and notation} \label{SS:notation}
In order to  facilitate exposition, we introduce some definitions and notation.
\subsubsection{Averages}
For  $N\in\N$  we let  $[N]:=\{1,\dots,N\}$. Let $a\colon \N\to \C$ be a  sequence.   If $A$ is a non-empty finite subset of $\N$  we let
$$
\E_{n\in A}\,a(n):=\frac{1}{|A|}\sum_{n\in A}\, a(n), \quad
\lE_{n\in A}\,a(n):=\frac{1}{\sum_{n\in A}\frac{1}{n}}\sum_{n\in A}\frac{a(n)}{n}.
$$
If $A$ is an infinite subset of $\N$ we let
$$
\E_{n\in A}\, a(n):=\lim_{N\to\infty} \E_{n\in A\cap [N]}\, a(n), \quad
\lE_{n\in A}\, a(n):=\lim_{N\to\infty} \lE_{n\in A\cap [N]}\, a(n)
$$
if  the limits exist. Also, if $a\colon \N^r\to \C$ is a sequence, we let
$$
\E_{n\in \N^r}\, a(n):=\lim_{N\to\infty} \E_{n\in  [N]^r}\, a(n)
$$
if the limit exists.

Lastly, let $\bM= ([M_k])_{k\in\N}$ be a sequence of intervals with $M_k\to \infty$.
We let
$$
\E_{n\in\bM}\, a(n):=\lim_{k\to\infty}\E_{n\in[M_k]} \, a(n), \quad
\lE_{n\in\bM}\, a(n):=\lim_{k\to\infty}\lE_{n\in[M_k]} \, a(n)
$$
if  the limits exist. Henceforth, we implicitly assume that the lengths of
all  sequences of intervals considered increase to infinity.

\subsubsection{Convergence in density}
A subset $Z$ of $\N^r$ has
{\em (natural) density $0$} if
$$
\lim_{N\to\infty}\frac{|Z\cap [N]^r|}{N^r}=0.
$$ If $n=(n_1,\ldots, n_r)\in \N^r$, we let $|n|:=|n_1|+\cdots+|n_r|$.
\begin{definition}\label{D:UD-lim}
	Let $r\in\N$. We say that the sequence $a\in \ell^\infty(\N^r)$ {\em converges in density} to $0$,
	and write
	$\Dlim_{|n|\to\infty} a(n)=0$, if any of the following three equivalent conditions hold:
	\begin{enumerate}

		\item For every $\varepsilon>0$ the set $\{
		n\in \N^r\colon |a(n)|\geq \varepsilon\}$ has natural density $0$;
		
		\item $\lim_{N\to \infty} \E_{n\in [N]^r}|a(n)|=0$;

\item 	$\lim_{|n|\to \infty, n\notin Z\, }a(n)=0$ for some $Z\subset \N^r$ with natural density $0$.
	\end{enumerate}
\end{definition}

 Several other notions used in the next two subsections are properly defined in Section~\ref{S:prel}.

\subsection{Correlations along deterministic sequences}\label{SS:det}
We start by giving some results related to correlations of multiplicative functions composed with a fixed low complexity sequence.
The following  definition  gives precise meaning to the term low complexity and also defines the notion of total ergodicity that is  crucial for our purposes (see
Section~\ref{S:prel} for basic background in ergodic theory and for the definition of F-systems).
\begin{definition}	With $\U$ we denote the complex unit disc.  We say that:
\begin{itemize}
	\item A sequence $a\colon \N\to  \U$ is {\em totally ergodic} if all its F-systems are totally ergodic.
	It is {\em deterministic} if all its F-systems have zero entropy.

	
\item 	A  sequence $a\colon\N\to\N$ is  {\em totally ergodic} (or {\em deterministic}) if it is strictly increasing,
its range $A:=a(\N)$ is a set of positive density, and
 the $\{0,1\}$-valued sequence ${\bf 1}_A$ is totally ergodic (respectively, deterministic).
	\end{itemize}
	\end{definition}
\begin{remarks}
	$\bullet$ 	In the bibliography (for example in \cite{K73, W71, W00}) a deterministic sequence is often referred to as  {\em completely deterministic}. Note that these definitions use Ces\`aro averages, but for our purposes in the definition of F-systems we use logarithmic averages.
	
	$\bullet$
 	See \cite[Lemma~4.25]{AKLR14} (or \cite{W00}) for necessary and sufficient conditions for a sequence $a\colon \N\to \mathbb{U}$ to be deterministic   that involve the word complexity of
	the  sequence.
	 For
	finite valued sequences,  when deterministic sequences are defined using Ces\`aro averages, they read as follows: For every $\varepsilon>0$ there exists $N\in \N$ such that if we  change the
	values of $a(n)$ on  a set of $n\in \N$ with natural density at most $\varepsilon$, then we can get a new sequence that has at most
	$2^{\varepsilon N}$ words of length $N$ on its range.
\end{remarks}
 An example of a sequence $a\colon \N\to \N$ that  is deterministic  and totally ergodic is $a(n)=n$ and another one is $a(n)=[n\alpha+\beta]$  where $\alpha$ is an irrational greater than $1$ and $\beta\in \R$. Other examples can be given by considering a uniquely and totally ergodic system $(X,\mu,T)$ with zero entropy and constructing $a\colon \N\to \N$ by taking the elements of the set
$S=\{n\in \N\colon T^nx_0\in U\}$ in increasing order, where   $x_0\in X$ is arbitrary and $U$ is a set with positive measure and with boundary of measure zero. One such example is the set    $S:=\{n\in \N\colon \{n^d\alpha\}\in [b,c)\}$ where $d\in \N$, $0\leq b<c<1$,  and   $\alpha$ is irrational (for $d=1$ these examples include all sequences of the form $[n\alpha+\beta]$, where $\alpha>1$ is irrational).

\begin{definition}
	A function $f\colon \N\to \C$ is called \emph{multiplicative} if
$$
f(mn)=f(m)f(n) \ \text{ whenever } \  (m,n)=1.
$$
	It is called \emph{completely multiplicative} if  the previous identity holds for every $m,n\in \N$. A \emph{Dirichlet character} is a periodic
	completely multiplicative function $\chi$  with $\chi(1)=1$.
For convenience,  we extend all multiplicative functions to $\Z$ by letting $f(n)=0$ for $n\leq 0$.
\end{definition}

Henceforth, with $\P$ we denote the set of prime numbers.
For notational convenience we  use  the following  notion of equivalence:
\begin{definition} Let $a,b\colon \P\to \mathbb{U}$.  We write
	$a\sim b$ if  $$
	\lE_{p\in \P}(1-\Re(a(p)\cdot \overline{b(p)}))=0.
	$$
\end{definition}
\begin{remarks}
	$\bullet$	If we restrict to sequences that take values on the unit circle, then $\sim$ is an equivalence relation and $a\sim b$ is equivalent to $\lE_{p\in \P}|a(p)-b(p)|^2=0$.
	
	$\bullet$ Using terminology from  \cite{TT18} we  have  that two multiplicative functions $f,g\colon \N\to\mathbb{U}$ satisfy  $f\sim g$ exactly when   ``$f$ weakly pretends to be $g$''.
\end{remarks}

Our first  theorem extends results of Tao \cite{T15} and Tao, Ter\"av\"ainen~\cite{TT18} that  correspond to the case  $a(n)=n$ (see Section~\ref{S:prel} for definitions of the notions used).

 \begin{theorem}\label{T:deterministic}
 Let   $a\colon \N\to \N$
 be a  deterministic and  totally ergodic sequence. Let $\ell\in \N$ and
 $f_1,\ldots, f_\ell\colon \N\to \U$ be   multiplicative functions such that
 for every Dirichlet character $\chi$ we have 	$f_1\cdots f_\ell\nsim \chi$. Then
 for   	all $n_1,\ldots, n_\ell\in \N$,
  we have
 \begin{equation}\label{E:Odd}
 	\lE_{m\in \N}\prod_{j=1}^\ell f_j(a(m+n_j))=0.
\end{equation}
 Furthermore, the conclusion holds if $\ell=2$, $n_1\neq n_2$, and either $f_1$ or $f_2$ is strongly aperiodic.
 	\end{theorem}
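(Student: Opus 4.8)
The plan is to pass, via a Furstenberg-type correspondence, to the case $a(n)=n$ already handled by Tao~\cite{T15} and Tao--Ter\"av\"ainen~\cite{TT18}, the transfer being powered by the structural description of Furstenberg systems of bounded multiplicative functions due to the author and Host. Throughout put $A:=a(\N)$ and let $d>0$ be its density.

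\textit{Unfolding the composition with $a$.} Since $a(m)\sim m/d$, logarithmic averages over $m\in\N$ of a bounded quantity depending only on $a(m)$ equal the corresponding logarithmic averages over $k\in A$, so
\[
\lE_{m\in\N}\prod_{j=1}^\ell f_j(a(m+n_j))=\frac1d\,\lE_{k\in\N}\Big[\mathbf 1_A(k)\prod_{j=1}^\ell f_j(\phi_{n_j}(k))\Big],
\]
where $\phi_n(k)$ denotes the element of $A$ lying $n$ positions after $k$ in the increasing enumeration. Collecting the indices with equal $n_j$ (and replacing each block by the product multiplicative function, which does not change $f_1\cdots f_\ell$), we may assume $n_1<\cdots<n_\ell$, so the $\phi_{n_j}(k)$ are distinct. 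Writing $\mathbf 1_A(k)=\sum_{\vec h}w_{\vec h}(k)$ with $w_{\vec h}(k):=\mathbf 1_A(k)\,\mathbf 1[\phi_{n_j}(k)=k+h_j\ \text{for all }j]$, each $w_{\vec h}$ is a bounded local function of $\mathbf 1_A$, and a Markov-type estimate (the forward gap $\phi_{n_\ell}(k)-k$ has bounded mean over $k\in A$) bounds $\sum_{\vec h:\max_j h_j>M}w_{\vec h}$ by $O(1/M)$ uniformly in the averaging scale. Hence it suffices to show that for each fixed tuple $h_1<\cdots<h_\ell$ of positive integers
\[
\lE_{k\in\N}\Big[w_{\vec h}(k)\prod_{j=1}^\ell f_j(k+h_j)\Big]=0 ,
\]
and here the weight $w_{\vec h}$ is deterministic and totally ergodic, being a local function of the deterministic, totally ergodic sequence $\mathbf 1_A$.

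\textit{From a weighted correlation to a disjointness statement.} Along a subsequence of scales realizing the relevant subsequential limit, form the logarithmic Furstenberg system $(X,\mu,T)$ of the sequence $k\mapsto(\mathbf 1_A(k),f_1(k),\dots,f_\ell(k))$, with coordinate functions $F_0,F_1,\dots,F_\ell$. Then $w_{\vec h}$ is represented by a function $\widetilde w$ measurable with respect to the factor $\mathcal Z$ generated by $F_0$, which is an F-system of $\mathbf 1_A$, hence of zero entropy and totally ergodic, and the correspondence gives $\lE_k[w_{\vec h}(k)\prod_j f_j(k+h_j)]=\int_X\widetilde w\,\prod_j F_j\circ T^{h_j}\,d\mu$ while $\int_X\prod_j F_j\circ T^{h_j}\,d\mu$ is a subsequential value of $\lE_{k\in\N}\prod_j f_j(k+h_j)$. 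The latter vanishes: by Tao--Ter\"av\"ainen~\cite{TT18}, using the hypothesis $f_1\cdots f_\ell\nsim\chi$ for every Dirichlet character $\chi$ together with the $h_j$ being distinct; and in the case $\ell=2$, $h_1\ne h_2$, by Tao~\cite{T15}, using strong aperiodicity of $f_1$ or $f_2$. Therefore the proof reduces to the independence (disjointness) identity
\[
\int_X\widetilde w\,\prod_{j=1}^\ell F_j\circ T^{h_j}\,d\mu=\int_X\widetilde w\,d\mu\cdot\int_X\prod_{j=1}^\ell F_j\circ T^{h_j}\,d\mu ,
\]
that is, to the assertion that the zero-entropy, totally ergodic factor $\mathcal Z$ is disjoint from the factor generated by $F_1,\dots,F_\ell$.

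\textit{The main obstacle.} Proving this disjointness is the heart of the matter, and it is here that the structural theorems of the author and Host on Furstenberg systems of multiplicative functions are used. On ergodic components, the factor generated by the $F_j$ splits, up to isomorphism, as a product of a system of completely positive entropy and a rotation on a procyclic group. The positive-entropy part is disjoint from $\mathcal Z$ --- indeed from the product of $\mathcal Z$ with the procyclic part, which is again a zero-entropy system --- while the procyclic part is disjoint from $\mathcal Z$ because a totally ergodic system shares no nontrivial factor with a rotation on a procyclic group. The second point is exactly where total ergodicity of $a$ is needed: it excludes the rational eigenvalues that could otherwise reintroduce the periodic obstruction which the hypothesis $f_1\cdots f_\ell\nsim\chi$ was designed to rule out. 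Combining the two disjointnesses --- using that the positive-entropy factor is relatively independent of the rest --- yields the displayed identity, and since it holds along every sequence of scales the original logarithmic average exists and equals $0$. The technical care goes into running this uniformly over the ergodic decomposition, bookkeeping the sequences of scales along which the Furstenberg systems are formed, and making the local-function and tail estimates of the first step precise.
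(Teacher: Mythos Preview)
Your approach is correct and rests on the same two decisive inputs as the paper: the disjointness of F-systems of bounded multiplicative functions from zero-entropy totally ergodic systems (here Theorem~\ref{T:DisjointSeveral}, drawn from \cite{FH18,FH19}), and the vanishing results of Tao~\cite{T15} and Tao--Ter\"av\"ainen~\cite{TT18} for shifts of the identity sequence. The reduction, however, is carried out differently. You decompose $\mathbf 1_A$ according to gap patterns $\vec h$, truncate with a Markov-type tail bound, and for each fixed $\vec h$ invoke disjointness inside the joint F-system of $(\mathbf 1_A,f_1,\dots,f_\ell)$ to factor the deterministic local weight $w_{\vec h}$ off the multiplicative correlation $\lE_k\prod_j f_j(k+h_j)$, which then vanishes by~\cite{T15,TT18}. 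The paper avoids the gap decomposition and the truncation altogether: it encodes the composition with $a$ via a single function $F_{j,n}(x,y)=x_j(\tau_y(n))\,\mathbf 1_{y(0)=1}$ on the product sequence space $X\times Y$ (with $\tau_y(n)$ the position of the $(n{+}1)$-st one in $y$); this function is not local but is continuous off a $\nu$-null set, so disjointness applied directly to it produces the identity of Theorem~\ref{T:identity},
\[
\lE_{m}\prod_{j=1}^\ell f_j(a(m+n_j))=\lE_{n}\lE_{m}\prod_{j=1}^\ell f_j(m+a(n+n_j)),
\]
after which~\cite{T15,TT18} kill the inner average for every fixed $n$. Your route is more hands-on and makes the role of the deterministic weight explicit; the paper's route is cleaner, sidesteps the tail estimate, and yields a reusable identity of independent interest. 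Your sketch of why disjointness holds --- ergodic components of the multiplicative F-system splitting as a positive-entropy piece times a rational-spectrum piece, each disjoint from a zero-entropy totally ergodic system for its own reason --- captures the mechanism behind~\cite{FH18,FH19}, which the paper simply quotes as a black box.
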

 \begin{remarks}
 	$\bullet$ The conclusion is expected to hold for all deterministic sequences $a\colon \N\to \N$. But even relaxing the total ergodicity assumption to ergodicity seems difficult.

$\bullet$ Note that \eqref{E:Odd} is non-trivial even when $\ell=1$  (this case  is also implicit in \cite{FH19}).
	\end{remarks}
 Note that when $\ell$ is odd or $\ell=2$,  the previous result applies to the case where all the multiplicative functions are equal to  the Liouville or the M\"obius function.

Henceforth, with $\S$ we denote the complex unit circle.
Using the previous result for $\ell=2$ and $f_1=f$, $f_2=\bar{f}$, where $f\colon \N\to \S$ is a strongly aperiodic multiplicative function, we can immediately deduce using an argument from  \cite{T16} (see \cite[Proposition~2.3]{F19} for the needed result)  the following:
\begin{corollary}
Let   $a\colon \N\to \N$
be a  deterministic and  totally ergodic sequence
and $f\colon\N\to \S$ be  a strongly aperiodic multiplicative function.
Then
\begin{equation}\label{E:unbounded}
\sup_{n\in\N}\Big|\sum_{k=1}^nf(a(k))\Big|=+\infty.
\end{equation}
	\end{corollary}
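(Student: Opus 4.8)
The plan is to derive this from Theorem~\ref{T:deterministic} applied with $\ell=2$. Take $f_1:=f$ and $f_2:=\bar f$, where $\bar f$ denotes the pointwise conjugate $n\mapsto\overline{f(n)}$; since $f$ is multiplicative and $\S$-valued, $\bar f$ is again an $\S$-valued multiplicative function (indeed $\bar f=f^{-1}$ pointwise). The first assertion of Theorem~\ref{T:deterministic} does not apply here, since $f_1f_2\equiv 1$ weakly pretends to be the principal Dirichlet character; instead we use the second (``Furthermore'') assertion, whose hypotheses are met because $f$ is strongly aperiodic and we take $n_1\neq n_2$. It gives, for all distinct $n_1,n_2\in\N$,
$$
\lE_{m\in\N}\, f\big(a(m+n_1)\big)\,\overline{f\big(a(m+n_2)\big)}=0 .
$$
Setting $g:=f\circ a\colon\N\to\S$ and using that logarithmic averages (when they exist) are invariant under integer shifts of the averaging variable, this is precisely the statement that
$$
\lE_{m\in\N}\, g(m+h)\,\overline{g(m)}=0\qquad\text{for every } h\in\N .
$$

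It then remains to pass from the vanishing of all nonzero self-correlations of the bounded $\S$-valued sequence $g$ to the unboundedness of its partial sums. This is the content of Tao's argument from \cite{T16}, isolated in the precise form we need as \cite[Proposition~2.3]{F19}: a sequence $g\colon\N\to\S$ with $\lE_{m\in\N}\,g(m+h)\,\overline{g(m)}=0$ for all $h\geq 1$ necessarily has $\sup_{n}\big|\sum_{k=1}^{n}g(k)\big|=+\infty$. Applying this to our $g$ yields \eqref{E:unbounded}. The heuristic behind that implication is that if $S_n:=\sum_{k=1}^n g(k)$ were bounded then, expanding $|S_n|^2=\sum_{j,k\leq n}g(j)\overline{g(k)}$ and taking a logarithmic average over $n\leq N$, the diagonal $j=k$ would contribute $\lE_{n\leq N}n\to\infty$ (here $\S$-valuedness is used), while the off-diagonal part is a weighted combination of truncated self-correlations of $g$; forcing the latter to be negligible contradicts the boundedness of $\lE_{n\leq N}|S_n|^2$.

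The one genuinely delicate point lies inside this last implication rather than in the reduction: the off-diagonal terms above involve the truncated sums $\sum_{k\leq n-h}g(k+h)\overline{g(k)}$ rather than the full averages, so one must control them with sufficient uniformity in the shift $h$ (equivalently, work along a suitable sequence of scales) to make the heuristic rigorous. This is exactly what is handled in \cite[Proposition~2.3]{F19}, so once Theorem~\ref{T:deterministic} is available the Corollary follows immediately.
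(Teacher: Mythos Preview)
Your proposal is correct and follows precisely the paper's own argument: apply the $\ell=2$ ``Furthermore'' clause of Theorem~\ref{T:deterministic} with $f_1=f$, $f_2=\bar f$, and then invoke \cite[Proposition~2.3]{F19} (the isolated form of Tao's argument from \cite{T16}) to pass from vanishing two-point correlations of $g=f\circ a$ to unboundedness of its partial sums. The additional heuristic you provide for the last implication is accurate but not needed, since the paper simply cites the black-box result.
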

\begin{remark}
The same argument works if $f$ coincides with a strongly aperiodic multiplicative function $f\colon \N\to \U$ on a set with logarithmic density one  and  satisfies $\liminf_{N\to\infty}\lE_{n\in [N]}|f(n)|>0$.
	\end{remark}
When $a(n)=n$, the divergence in  \eqref{E:unbounded} was established  by Tao~\cite{T16} for every completely multiplicative function $f\colon \N\to \S$ and this was a decisive step in his solution of the Erd\"os discrepancy problem.
It seems likely that the conclusion of the corollary also holds   for every completely multiplicative function $f\colon \N\to \S$ but it is not clear how to prove this when $f$ is  not strongly aperiodic.

\subsection{Correlations along independent sequences}\label{SS:independent}
Next, we give results about correlations of multiplicative functions with shifts belonging to sets, or given by  sequences, that  satisfy certain linear independence properties.
\begin{definition}
	We say that a subset $S$ of $\N^\ell$  \emph{has independent elements} if for every non-zero $k\in \Z^\ell$
	the equation $k\cdot n=0$ has only finitely many solutions in $S$.
\end{definition}
\begin{remark}
	The range of the three collections of  sequences  given in  examples (i)-(iii) below form subsets of
	$\N^\ell$ with independent elements. These  are ``thin sets'', but there are also examples of subsets of $\N^\ell$ with independent elements that have density $1$; to see this, using a standard construction, take $Z$ to be  a set of zero density  that contains all but finitely many  elements of each of the sets $\{n\in \N^\ell\colon k\cdot n=0\}$, where $k\in \Z^\ell$ is  non-zero, and let $S:=\N^\ell\setminus Z$.
\end{remark}

\begin{theorem}\label{T:indaper''}
	Let $\ell\in \N$,
	$f_0,\ldots, f_\ell\colon \N\to \U$  be multiplicative functions that admit log-correlations on
	$\bM$, and suppose that at least one of them is strongly aperiodic.
	Furthermore,   let  $S$ be an infinite  subset of $\N^\ell$  that has independent elements.
	We set  $n_0:=0$ and denote the coordinates of $n\in \N^\ell$ with $n_1,\ldots, n_\ell$.
	Then
	\begin{equation}\label{E:inS'}
		\lim_{|n|\to \infty, n\in S}\big(\lE_{m\in\bM}\, \prod_{j=0}^\ell f_j(m+n_j)\big)=0.
	\end{equation}
\end{theorem}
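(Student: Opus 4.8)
\emph{Plan of proof.}

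\textbf{Step 1: passage to an ergodic model.} Since $f_0,\dots,f_\ell$ admit log--correlations on $\bM$, the weak-$*$ limit along $\bM$ of the logarithmically averaged point masses at the orbit of $m\mapsto\bigl((f_0(m+\cdot)),\dots,(f_\ell(m+\cdot))\bigr)$ under the shift on $\U^{\Z}\times\cdots\times\U^{\Z}$ exists and produces a measure preserving system $(X,\CX,\mu,T)$ together with functions $F_0,\dots,F_\ell$ with $\|F_j\|_\infty\le 1$ (namely $F_j$ reads off the $0$-th coordinate of the $j$-th factor) such that for every $n\in\N^\ell$, with $n_0=0$,
$$
\lE_{m\in\bM}\prod_{j=0}^\ell f_j(m+n_j)=\int_X\prod_{j=0}^\ell T^{n_j}F_j\,d\mu=:c(n).
$$
Fix an index $i_0$ with $f_{i_0}$ strongly aperiodic. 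By the theorem of Matom\"aki--Radziwi{\l}{\l}--Tao in its logarithmic form (together with the asymptotic shift invariance of logarithmic averages), $\lim_{H\to\infty}\E_{h\in[H]}\big|\lE_{m\in\bM}f_{i_0}(m)\overline{f_{i_0}(m+h)}\big|=0$; translated through the model this says $\E(F_{i_0}\mid\CI)=0$, where $\CI$ is the $\sigma$-algebra of $T$-invariant sets, and in fact the spectral measure of $F_{i_0}$ is Rajchman. This is the only input from analytic number theory; everything else is ergodic theory.

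\textbf{Step 2: reduction to a nilfactor, valid along $S$.} The goal becomes $\lim_{|n|\to\infty,\ n\in S}c(n)=0$. Using a multiple ergodic theorem for the averages $\int\prod_j T^{n_j}F_j\,d\mu$ --- obtained from the Host--Kra--Ziegler theory of characteristic factors together with a van der Corput / PET induction --- one shows that $c(n)$ differs from $\int_X\prod_{j=0}^\ell T^{n_j}\E(F_j\mid\CZ_s)\,d\mu$ by a quantity tending to $0$ as $|n|\to\infty$ along $S$, for a Host--Kra factor $\CZ_s$ of degree $s=s(\ell)$. The delicate point, and the place where the hypothesis on $S$ enters, is that this reduction must hold for the genuine limit along $S$ rather than merely for Ces\`aro averages or in density: here one uses that, because $S$ has independent elements, every nonzero integer linear form $k\cdot n$ --- in particular every pairwise difference $n_i-n_j$ --- tends to infinity along $S$, so each van der Corput step can be run along $S$ and the induction terminates at the two-point case, which is handled by the Rajchman property from Step 1. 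After this reduction we pass to $\CZ_s$, which is an inverse limit of nilsystems, and by approximation we may assume $X=G/\Gamma$ is a nilmanifold with $Tx=gx$ for some $g\in G$, and that the invariant factor of this nilsystem is inherited from $\CI$.

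\textbf{Step 3: equidistribution along $S$ and conclusion.} On the nilsystem we have $c(n)=\Psi(g^{n_1},\dots,g^{n_\ell})$, where $\Psi(h_1,\dots,h_\ell)=\int_{G/\Gamma}F_0(x)\prod_{j=1}^\ell F_j(h_j x)\,d\mu(x)$ is continuous on $(G/\Gamma)^\ell$. By Leibman's qualitative equidistribution theorem, the orbit $n\mapsto(g^{n_1},\dots,g^{n_\ell})\Gamma^\ell$ over $n\in\Z^\ell$ equidistributes in $\big(\overline{\langle g\rangle}\big)^\ell$; since $S$ has independent elements the sub-orbit over $n\in S$ cannot concentrate on any proper sub-nilmanifold (such concentration would force a relation $k\cdot n=c$ with $k\neq0$ for infinitely many $n\in S$, after the low-order frequencies have been trivialised using $\E(F_{i_0}\mid\CZ_1)=0$), so it too equidistributes there. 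Hence
$$
\lim_{|n|\to\infty,\ n\in S}c(n)=\int_{(\overline{\langle g\rangle})^\ell}\Psi\,dm=\int_{G/\Gamma}\prod_{j=0}^\ell\E(F_j\mid\CI)\,d\mu,
$$
and this vanishes because $\E(F_{i_0}\mid\CI)=0$. Undoing the approximation and the inverse-limit passage gives $\lim_{|n|\to\infty,\ n\in S}c(n)=0$, i.e.\ \eqref{E:inS'}.

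\textbf{Main obstacle.} The heart of the argument is Step~2: making the characteristic-factor reduction quantitative enough to survive passage to the possibly sparse set $S$ --- equivalently, showing that the ``non-nilpotent'' part of the correlation sequence tends to $0$ along \emph{every} sequence escaping all rational hyperplanes, not merely on a set of density one. This is exactly where the independence of the elements of $S$ is used in full strength, through the bookkeeping that keeps all linear forms in $n$ escaping to infinity during the van der Corput / PET induction; the companion equidistribution in Step~3 then only needs to be checked on nilsystems whose low-order frequencies have already been killed by the strong aperiodicity of $f_{i_0}$.
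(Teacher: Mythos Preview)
Your Step~2 is where the argument breaks down, and the fix is not a matter of bookkeeping: you are missing a key ingredient that the paper uses in an essential way.  The paper does \emph{not} try to reduce the raw correlation $c(n)=\int\prod_j T^{n_j}F_j\,d\mu$ to a nilfactor along $S$.  Instead it first invokes the Tao--type identity (Theorem~\ref{T:Tao2}, based on the entropy decrement argument and valid precisely because the $f_j$ are multiplicative):
\[
c(n)=\int\prod_{j=0}^{\ell}T^{n_j}F_j\,d\mu=\E_{p\in\P_d}\int\prod_{j=0}^{\ell}T^{pn_j}F_j\,d\mu
\]
for some fixed $d\in\N$.  The inserted average over primes $p$ is the whole point: for each \emph{fixed} $n$ the right-hand side is now a genuine multiple ergodic average (in the variable $p$), so the Host--Kra characteristic factor machinery (Theorem~\ref{T:FHK}) applies and one may replace each $F_j$ by $\E(F_j\mid\CZ_\ell)$ with an error that is zero---not merely $o(1)$---for every $n$ with distinct coordinates.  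Only then does one pass to the limit along $S$, and on the nilsystem side the independence hypothesis is used exactly once, via Lemma~\ref{L:orthogonality}: the average over $p$ (equivalently over $m$, after Proposition~\ref{P:ergid2}) vanishes for every $n\in S$ outside a finite set, because a certain linear form $\sum_j n_j k_j$ is nonzero.

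Your proposed route---running van der Corput/PET ``along $S$'' to reduce $c(n)$ itself to a nilfactor---cannot work as stated.  Van der Corput is an averaging inequality, and there is no average over $n$ when one takes $\lim_{|n|\to\infty,\,n\in S}$.  The best general structural information about the sequence $n\mapsto c(n)$ is the Bergelson--Host--Kra/Leibman decomposition into a nilsequence plus a sequence that tends to $0$ \emph{in density}; the latter piece can certainly be bounded away from $0$ along a sparse set like $S$.  Your supporting claim that ``every pairwise difference $n_i-n_j$ tends to infinity along $S$'' is also false: independence only forbids $k\cdot n=0$ infinitely often, not $k\cdot n=c$ for $c\neq 0$ (for instance $S=\{(1,m):m\in\N\}\subset\N^2$ has independent elements but $n_1\equiv 1$).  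Finally, in Step~1 strong aperiodicity together with Matom\"aki--Radziwi{\l}{\l}--Tao gives orthogonality to the Kronecker factor (continuous spectral measure), not the stronger Rajchman property you invoke; the paper only needs the former (Proposition~\ref{P:nt}(i)), and the extra prime average is what makes that suffice.
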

\begin{remarks}
	$\bullet$ Proving \eqref{E:inS'} under the weaker  assumption that $S$ has distinct elements is as hard as Elliott's conjecture
	(a generalization of the Chowla conjecture). In fact, if  \eqref{E:inS'} holds for the set $S:=\{(n_1n,\ldots, n_\ell n), n\in\N\}$ for some specific $n_1,\ldots, n_\ell\in \N$, then  Theorem~\ref{T:Tao2} below gives  that $\lE_{m\in\bM}\, \prod_{j=0}^\ell f_j(m+n_j)=0$.	

$\bullet$	Note that for $\ell$ odd, Theorem~\ref{T:indaper''} is not covered by  Theorem~\ref{T:deterministic} (for $a(n)=n$) because the assumptions on the multiplicative functions in
each result are  different.
\end{remarks}

From Theorem~\ref{T:indaper''} we can easily deduce (see  Section~\ref{S:independent}) a result about correlations of multiplicative functions with shifts given by sequences that satisfy certain independence properties that we define next.
 \begin{definition}	
Let $\ell\in \N$. We say that a collection of  sequences $a_1,\ldots, a_\ell \colon \N^r\to  \N$ 
\begin{itemize}
\item {\em independent}  if for every   $k_1,\ldots, k_\ell\in \Z$, not all of them zero,   we have that $\sum_{j=1}^\ell k_j a_j(n)\neq 0$ for all but finitely many  $n\in \N^r$.

	\item
  {\em weakly independent} if for every   $k_1,\ldots, k_\ell\in \Z$, not all of them zero, we have that
   $\sum_{j=1}^\ell k_j a_j(n)\neq 0$  outside  a set of $n\in\N^r$ with  density zero.
\end{itemize}
\end{definition}
Restricting to the case $r=1$ we can easily verify  that the following collections of sequences are independent:
\begin{enumerate}[(i)]
\item $[n\alpha_1+\beta_1],\ldots, [n\alpha_\ell+\beta_\ell]$, where $\alpha_1,\ldots, \alpha_\ell\in \R_+$  are rationally independent, meaning, all non-trivial integer combinations of the $\alpha_1,\ldots, \alpha_\ell$ are non-zero, and $\beta_1,\ldots, \beta_\ell\in \R$ are arbitrary.

    \item $a_1(n),\ldots,a_\ell(n)$, where $a_1,\ldots, a_\ell\colon \N\to \N$ have different growth rates, meaning, they satisfy  $\lim_{n\to\infty}a_i(n)/a_j(n)=  0$ or $+\infty$ for $i\neq j$.

     \item $p_1(n),\ldots, p_\ell(n)$, where
$p_1, \ldots, p_\ell\colon \N\to \N$ are  linearly independent polynomials.
\end{enumerate}

 If  $\ell,r \in \N$ and $r\geq \ell$, it is easy to verify that any collection
  $L_1,\ldots, L_\ell\colon \N^r\to \N$ of  linearly   independent linear forms is weakly independent,
    but if $r\geq 2$, then  no such collection can be  independent.



\begin{theorem}\label{T:indaper}
	Let $\ell\in \N$,
	$f_0,\ldots, f_\ell\colon \N\to \U$  be multiplicative functions that admit log-correlations on $\bM$, and suppose that at least one of them is strongly aperiodic.
	\begin{enumerate}[(i)]
\item Let $a_0:=0$ and suppose that   $a_1,\ldots,a_\ell \colon \N^r\to \N$ are  independent sequences. Then
	\begin{equation}\label{E:full}
	\lim_{|n|\to \infty}\big(\lE_{m\in\bM}\prod_{j=0}^\ell f_j(m+a_j(n))\big)=0.
	\end{equation}
	
	\item Let $a_0:=0$ and suppose that   $a_1,\ldots,a_\ell \colon \N^r\to \N$ are weakly independent sequences. Then
	\begin{equation}\label{E:zerodens}
	\Dlim_{|n|\to \infty}\big(\lE_{m\in\bM}\prod_{j=0}^\ell f_j(m+a_j(n))\big)=0.
	\end{equation}
	\end{enumerate}
\end{theorem}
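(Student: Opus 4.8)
The plan is to deduce Theorem~\ref{T:indaper} from Theorem~\ref{T:indaper''} by composing with the ``index map''. I would define $\Phi\colon\N^r\to\N^\ell$ by $\Phi(n):=(a_1(n),\dots,a_\ell(n))$ and, for a tuple $h=(h_1,\dots,h_\ell)\in\N^\ell$ with the convention $h_0:=0$, set $g(h):=\lE_{m\in\bM}\prod_{j=0}^\ell f_j(m+h_j)$; this is well defined since the $f_j$ admit log-correlations on $\bM$. The quantity whose behaviour is asserted in \eqref{E:full} and \eqref{E:zerodens} is precisely $g(\Phi(n))$, so it suffices to control $g$ along the range of $\Phi$. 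First I would dispose of the degenerate case $\ell=1$, where the (weak) independence hypothesis is vacuous: there $g(h_1)=\lE_{m\in\bM}f_0(m)f_1(m+h_1)$, which vanishes for every $h_1\geq1$ because one of $f_0,f_1$ is strongly aperiodic (e.g.\ by \cite{T15}, or by combining the $\ell=1$ case of Theorem~\ref{T:indaper''} with Theorem~\ref{T:Tao2}); hence both conclusions are immediate, and I may assume $\ell\geq2$ henceforth.

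For part (i), put $S:=\Phi(\N^r)\subseteq\N^\ell$. I would check two facts. \emph{First, $S$ has independent elements:} for non-zero $k\in\Z^\ell$, independence of $a_1,\dots,a_\ell$ makes $F:=\{n\in\N^r\colon\sum_j k_j a_j(n)=0\}$ finite, whence $\{h\in S\colon k\cdot h=0\}\subseteq\Phi(F)$ is finite. \emph{Second, $|\Phi(n)|\to\infty$ as $|n|\to\infty$:} otherwise $|\Phi(n)|\leq B$ for infinitely many $n$, which forces each $a_j(n)\leq B$, so $\Phi$ takes values in the finite set $[B]^\ell$ along an infinite set and hence is constant, equal to some $c=(c_1,\dots,c_\ell)$ with all $c_j\geq1$, along an infinite set $T$; then $k:=(c_2,-c_1,0,\dots,0)$ is a non-zero element of $\Z^\ell$ with $\sum_j k_j a_j(n)=0$ on $T$, contradicting independence. (In particular $S$ is infinite, so Theorem~\ref{T:indaper''} applies.) Theorem~\ref{T:indaper''} then gives $\lim_{|h|\to\infty,\,h\in S}g(h)=0$, and composing with $|\Phi(n)|\to\infty$ yields $\lim_{|n|\to\infty}g(\Phi(n))=0$, i.e.\ \eqref{E:full}.

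For part (ii), weak independence gives that for each non-zero $k\in\Z^\ell$ the set $Z_k:=\{n\in\N^r\colon\sum_j k_j a_j(n)=0\}$ has density zero. The main step is then a routine diagonalisation, identical in spirit to the ``standard construction'' mentioned in the remark after the definition of sets with independent elements: enumerate the non-zero $k\in\Z^\ell$ as $k^{(1)},k^{(2)},\dots$, set $E_i:=Z_{k^{(1)}}\cup\cdots\cup Z_{k^{(i)}}$ (still of density zero), pick radii $R_1<R_2<\cdots$ growing fast enough that $|E_i\cap[N]^r|<N^r/i$ for all $N\geq R_i$ and $iR_i^r<R_{i+1}^r$, and put $Z:=\bigcup_i\bigl(E_i\cap([R_{i+1}]^r\setminus[R_i]^r)\bigr)$; a short counting argument shows $Z$ has density zero while $Z_k\setminus Z$ is finite for every non-zero $k$. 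With such a $Z$ fixed, $S':=\Phi(\N^r\setminus Z)$ has independent elements (as in part (i), now using $\{h\in S'\colon k\cdot h=0\}\subseteq\Phi(Z_k\setminus Z)$, which is finite) and $|\Phi(n)|\to\infty$ as $|n|\to\infty$ with $n\notin Z$ (the argument of part (i) again applies, with $Z_k\setminus Z$ finite in place of $F$). Theorem~\ref{T:indaper''} applied to $S'$ gives $g(\Phi(n))\to0$ as $|n|\to\infty$ with $n\notin Z$, and since $Z$ has density zero, condition (iii) of Definition~\ref{D:UD-lim} yields \eqref{E:zerodens}.

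All the substance here lies in Theorem~\ref{T:indaper''}; granting it, the deduction is routine. The points that need some care are the density-zero diagonalisation in part (ii), the observation that $|\Phi(n)|\to\infty$ (which is exactly what allows Theorem~\ref{T:indaper''} to be transported from the space of shifts back to the parameter space $\N^r$) together with the attendant infiniteness of the ranges $S$ and $S'$, and the separate treatment of $\ell=1$, where ``independence'' imposes no constraint whatsoever.
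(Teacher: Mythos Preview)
Your proposal is correct and follows essentially the same route as the paper. The paper factors the deduction through an intermediate statement (Theorem~\ref{T:indaper'}) and then reduces that to Theorem~\ref{T:indaper''}, whereas you go directly from Theorem~\ref{T:indaper''} to Theorem~\ref{T:indaper}; the content is identical, including the separate treatment of $\ell=1$ via Tao's two-point result, the observation that $|\Phi(n)|\to\infty$ (the paper phrases this as ``the vectors attain some fixed value infinitely often, contradicting independent elements''), and the density-zero diagonalisation in part~(ii), which the paper merely cites as ``well known'' while you spell it out.
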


Lastly, we give a result regarding correlations of  arbitrary multiplicative functions taking values on the (real) unit interval. For this,  we need to impose an  equidistribution assumption that we define next.
\begin{definition}	
	Let $\ell,r\in \N$. We say that a collection of  sequences $a_1,\ldots, a_\ell \colon \N^r\to  \N$ is
	{\em jointly equidistributed in congruence classes} if
	the sequence $(a_1(n),\ldots, a_\ell(n))_{n\in\N^r}$ is
	equidistributed in congruence classes.
	Equivalently,
	 for all $u\in \N$ and all $k_1,\ldots, k_\ell\in \Z$, not all of them multiples of $u$, we have   $\E_{n\in\N^r}  \,   e^{\frac{2\pi i}{u} \, \sum_{j=1}^\ell k_ja_j(n)}= 0$.
\end{definition}
It is known that the collection of sequences
$[n\alpha_1],\ldots, [n\alpha_\ell]$, where $1, \alpha_1,\ldots, \alpha_\ell\in \R$  are rationally independent,
and the collection   $[n^{c_1}], \ldots, [n^{c_\ell}]$, where $c_1,\ldots, c_\ell\in \R_+\setminus\N$ are distinct, are both
  jointly equidistributed in congruence classes. The same holds for any  collection of   linear forms  $L_1,\ldots, L_\ell\colon \N^\ell\to \N$ as long as   the determinant of their coefficient matrix has absolute value $1$. As remarked before, the previous collections of sequences are also weakly independent.
\begin{theorem}\label{T:indequi}
	Let  $\ell,r\in \N$. Suppose that $a_0:=0$ and    $a_1,\ldots,a_\ell \colon \N^r\to \N$  are
	weakly independent  sequences that are jointly equidistributed in congruence classes. Furthermore, suppose that the multiplicative functions
	$f_0,\ldots, f_\ell\colon \N\to [-1,1]$  admit log-correlations on  $\bM$. Then
	\begin{equation}\label{E:totind}
		\E_{n\in \N^r}\lE_{m\in\bM}\prod_{j=0}^\ell f_j(m+a_j(n))=\prod_{j=1}^\ell\lE_{m\in\bM}f_j(m).
	\end{equation}
\end{theorem}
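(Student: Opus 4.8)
The plan is to reduce the statement to a combination of Theorem~\ref{T:indaper''} (handling the "aperiodic part" of each $f_j$) and an elementary Fourier-analytic computation on $\N^r$ (handling the "structured part", which for real-valued multiplicative functions means the rational/periodic part). First I would invoke the structure theory for multiplicative functions $f\colon\N\to[-1,1]$: up to a correlation-negligible error, such an $f$ either pretends to be a rational character $n\mapsto \chi(n)n^{it}$ with $t=0$ — forced by the real values, so just a real Dirichlet character $\chi$ — or is strongly aperiodic. More precisely I would write each $f_j = g_j + h_j$ (on average), where $g_j$ is either $\mathbf{1}$-times-a-periodic-function coming from the pretentious part or $g_j\sim\chi_j$ for a real character, and where after removing all such structured pieces one is left with a genuinely aperiodic remainder whose contribution to every correlation vanishes by Theorem~\ref{T:indaper''}. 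The key reduction: if any $f_j$ is strongly aperiodic then by Theorem~\ref{T:indaper} (with the $a_j$ weakly independent, using part~(ii) to get a $\Dlim$, which suffices since $\E_{n\in\N^r}$ only sees the density limit) the left side of \eqref{E:totind} is $0$; but then I must also show the right side is $0$, which follows because a strongly aperiodic multiplicative function $f_j$ satisfies $\lE_{m\in\bM}f_j(m)=0$ (the mean value vanishes — this is the $\ell=1$ case, e.g.\ via Wirsing/Halász).

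So the substantive case is when every $f_j$ is structured, i.e.\ $f_j\sim\chi_j$ for a real Dirichlet character $\chi_j$ (including the principal character, the "$f_j$ pretends to be a constant-on-coprimes periodic function" case). Here I would use the log-correlation hypothesis together with a Matom\"aki--Radziwi\l\l--Tao-type identity to replace $\lE_{m\in\bM}\prod_j f_j(m+a_j(n))$ by its "model" value: the correlation of the structured parts. Concretely, let $u$ be a common period (modulus) for all the $\chi_j$; then $f_j(m+a_j(n))$ depends, up to the aperiodic error already discarded, on $m+a_j(n)\bmod u$ through $\chi_j$, so
\[
\lE_{m\in\bM}\prod_{j=0}^\ell f_j(m+a_j(n)) = \sum_{\substack{r_0,\dots,r_\ell\\ \bmod u}} \Big(\prod_{j=0}^\ell \chi_j(r_j)\Big)\, c(r_0-a_0(n),\dots,r_\ell-a_\ell(n)) + o(1),
\]
where $c(\cdot)$ records the logarithmic density of $m$ lying in prescribed residues, and the dependence on $n$ enters only through the residues $a_j(n)\bmod u$. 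Averaging over $n\in\N^r$ and invoking joint equidistribution in congruence classes — exactly the hypothesis that $\E_{n\in\N^r} e^{\frac{2\pi i}{u}\sum k_j a_j(n)}=0$ unless all $k_j\equiv 0\ (u)$ — kills all "off-diagonal" Fourier modes, and one is left with the fully-averaged (independent) value, which by the Chinese Remainder Theorem / multiplicativity factors as $\prod_{j=1}^\ell \lE_{m\in\bM} f_j(m)$ (the $j=0$ factor $\lE_{m\in\bM}f_0(m)$ being absorbed: note the right-hand side of \eqref{E:totind} runs $j=1$ to $\ell$, so I must check the $j=0$ bookkeeping — the point is that $m$ itself is the averaging variable and $a_0=0$, so $f_0(m)$ is not "shifted" and its mean value is precisely the normalizing factor).

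The main obstacle is the passage from the correlation of the $f_j$'s to the correlation of their structured models — i.e.\ justifying that removing the aperiodic remainders is legitimate uniformly enough to survive the average over $n$. This is where one needs the quantitative input from analytic number theory: a version of the result that for multiplicative $f\colon\N\to[-1,1]$ with $f\sim\chi$, the logarithmic correlation $\lE_{m\in\bM}\prod f_j(m+h_j)$ depends only on the $h_j$ modulo the period, up to an error that is $o(1)$ as the $h_j$ vary in a controlled way — and crucially this error must be $o(1)$ on average over $n$, not merely pointwise. I expect to handle this by a standard $\limsup$ argument: bound $\big|\E_{n\in\N^r}(\text{true correlation}-\text{model})\big|$ by $\sup_n |\text{true}-\text{model}|$ along a subsequence where the structured approximation is uniform, or, cleaner, by first establishing the identity for $f_j$ exactly equal to real Dirichlet characters (a pure CRT computation) and then using a "pretentious distance controls correlations" estimate (of the type $d_{\mathrm{mult}}(f,\chi)$ small $\Rightarrow$ correlations of $f$ close to correlations of $\chi$) to transfer it. The remaining details — interchanging the $\E_n$ and $\lE_m$ limits (legitimate since everything is bounded by $1$ and we may pass to the relevant subsequence defining $\bM$), and the combinatorics of the Fourier expansion mod $u$ — are routine.
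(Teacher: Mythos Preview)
Your case split --- ``some $f_j$ strongly aperiodic'' versus ``all $f_j$ pretentious'' --- is natural, and your treatment of the first case via Theorem~\ref{T:indaper}(ii) is fine (the $\Dlim$ indeed forces the Ces\`aro average $\E_{n\in\N^r}$ to vanish, and strong aperiodicity forces the corresponding mean value on the right to vanish). But the second case, which you correctly identify as the substantive one, has a genuine gap.

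The step you flag as ``the main obstacle'' --- replacing each $f_j$ by a periodic model and asserting that the correlations change only by $o(1)$ --- is not a routine transfer. Knowing that $f_j$ is \emph{not} strongly aperiodic is an extremely weak structural statement: it says only that $\D(f_j\chi_j, n^{it};N)$ stays bounded for some $\chi_j$ and some $t$ along a subsequence of $N$'s. This does not pin down the correlations $\lE_{m\in\bM}\prod_j f_j(m+h_j)$ as functions of the residues $h_j\bmod u$, not even approximately; there is no off-the-shelf ``pretentious distance controls multi-correlations'' estimate of the kind you invoke. (A concrete obstruction: $f_j$ could vanish on many primes, or take values in $(-1,1)$, and then its correlations need not resemble those of any Dirichlet character.) Nor does the additive decomposition $f_j = g_j + h_j$ behave well: the cross-terms in $\prod_j(g_j+h_j)$ mix periodic and non-multiplicative pieces, so Theorem~\ref{T:indaper''} does not apply to them.

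The paper's route is genuinely different and avoids this obstacle. It passes to the Furstenberg system $(X,\mu,T)$ of $f_0,\ldots,f_\ell$ on $\bM$, uses Tao's identity (Theorem~\ref{T:Tao2}) to rewrite the correlation as $\E_{p\in\P_d}\int\prod_j T^{pn_j}F_j\,d\mu$, and then proves the purely ergodic Proposition~\ref{P:ET1}: for \emph{ergodic} systems and weakly independent, jointly equidistributed $a_j$, this prime-averaged integral, averaged over $n$, factors as $\prod_j\int F_j\,d\mu$. The real-valuedness of the $f_j$ enters only at the very end, through Matom\"aki--Radziwi{\l}{\l} (Proposition~\ref{P:nt}(ii)), to conclude $\E(F_j|\mathcal I)=\int F_j\,d\mu$ and hence pass from ergodic components back to $\mu$. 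In other words, the structured/aperiodic dichotomy is never made at the level of the $f_j$'s themselves; it is replaced by the Host--Kra characteristic-factor machinery on the ergodic side, where the reduction to nilsystems and then to the Abelian case (your ``Fourier analysis mod $u$'') is rigorous. Your proposed shortcut would need a new correlation-comparison theorem that, as far as I know, is not available.
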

\begin{remarks}
	$\bullet$ Identity \eqref{E:totind}  fails for complex valued multiplicative functions. For example, if $\ell=r=1$, $f_0(n):=n^i$, $f_1(n):=n^{-i}$, and $a_1(n):=n$, then $\lE_{n\in \N}\, f_0(n)=0$ but
	$\lE_{m\in \N} \, f_0(m)\, f_1(m+a_1(n))=1$ for every $n\in\N$.

	$\bullet$ The multiplicativity assumption is used in an essential way, it is easy to verify that \eqref{E:totind} is not always true if $f_0,\ldots, f_\ell$ are allowed to be arbitrary sequences  taking values in $[-1,1]$, even if these sequences are jointly totally ergodic.
	
	$\bullet$ Examples of Dirichlet characters show that \eqref{E:totind} fails if we replace the average $\E_{n\in \N^r}$ with the limit in density $\Dlim_{|n|\to\infty}$.
\end{remarks}

\subsection{Sign patterns}
Using the results of the previous subsections it is easy to deduce results on sign patterns attained by  multiplicative functions. The next result is a consequence of Theorem~\ref{T:deterministic} and extends \cite[Corollary~1.10(i)]{TT18}, which corresponds to the case where $a(n)=n$ (the result in \cite{TT18} is stated only for $f=\lambda$ but the argument given works in the more general setup of the next theorem).
\begin{theorem}\label{T:sign}
Let   $a\colon \N\to \N$
be a  deterministic and  totally ergodic sequence and $f\colon\N\to \{-1,1\}$ be a multiplicative function such that $f\nsim \chi$ for every Dirichlet character $\chi$ (it is known that then $f$ is also strongly aperiodic). Furthermore, let   $n_1,n_2,\in \N$ be distinct,   and $\epsilon_0,\epsilon_1, \epsilon_2\in \{-1,+1\}$.
Then the logarithmic density of the set
$
\{m\in \N \colon f(a(m))=\epsilon_0,\,  f(a(m+n_1))=\epsilon_1, \, f(a(m+n_2))=\epsilon_2\}
$
is $\frac{1}{8}$.
\end{theorem}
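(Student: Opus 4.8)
The plan is to expand the indicator function of the sign-pattern set into a linear combination of correlation averages of $f\circ a$ and then kill every non-constant term by appealing to Theorem~\ref{T:deterministic}. Put $n_0:=0$. Since $f$ takes values in $\{-1,1\}$ and $a(m),a(m+n_1),a(m+n_2)$ are positive integers, for each $m$ we have $\one\{f(a(m+n_j))=\epsilon_j\}=\frac{1}{2}\big(1+\epsilon_j f(a(m+n_j))\big)$, so the indicator of the set in the statement equals
$$
\prod_{j=0}^{2}\frac{1+\epsilon_j f(a(m+n_j))}{2}
=\frac{1}{8}\sum_{I\subseteq\{0,1,2\}}\Big(\prod_{j\in I}\epsilon_j\Big)\prod_{j\in I}f(a(m+n_j)).
$$
Applying $\lE_{m\in\N}$ term by term, the term $I=\emptyset$ contributes $\frac{1}{8}$, so it suffices to prove that $\lE_{m\in\N}\prod_{j\in I}f(a(m+n_j))=0$ for every non-empty $I\subseteq\{0,1,2\}$; this will also show that the logarithmic density in question exists.

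Before invoking Theorem~\ref{T:deterministic} I would reduce to strictly positive, pairwise distinct shifts by using the shift-invariance of logarithmic averages: for bounded $g$ and $h\in\N$ one has $\lE_{m\in\N}g(m+h)=\lE_{m\in\N}g(m)$ whenever the latter exists, since the partial sums $\sum_{m\le N}g(m+h)/m$ and $\sum_{m\le N}g(m)/m$ differ by $O_h(1)=o\big(\sum_{m\le N}1/m\big)$. Translating the variable of $\lE_{m\in\N}\prod_{j\in I}f(a(m+n_j))$ by a suitable positive integer turns it into an average $\lE_{m\in\N}\prod_{i=1}^{|I|}f(a(m+m_i))$ with $m_1,\dots,m_{|I|}\in\N$ pairwise distinct, distinctness being inherited from the fact that $0,n_1,n_2$ are pairwise distinct.

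Now I would apply Theorem~\ref{T:deterministic} with $f_1=\cdots=f_{|I|}=f$ and the deterministic, totally ergodic sequence $a$, separating three cases:
\begin{itemize}
\item $|I|=1$: $f_1=f\nsim\chi$ for every Dirichlet character $\chi$ by hypothesis, so \eqref{E:Odd} gives $\lE_{m\in\N}f(a(m+m_1))=0$.
\item $|I|=3$: here $\ell=3$ is odd and $f_1f_2f_3=f^3=f\nsim\chi$ for every Dirichlet character, so \eqref{E:Odd} applies again.
\item $|I|=2$: here $\ell=2$ and $m_1\neq m_2$; one cannot use the first part of Theorem~\ref{T:deterministic} because $f_1f_2=f^2\equiv 1$ coincides with the principal character, but $f$ is strongly aperiodic (which, as noted in the statement, follows from $f\nsim\chi$ for all Dirichlet characters), so the ``furthermore'' clause of Theorem~\ref{T:deterministic} yields $\lE_{m\in\N}f(a(m+m_1))f(a(m+m_2))=0$.
\end{itemize}
Since each of these averages vanishes, the logarithmic density of the sign-pattern set equals $\frac{1}{8}$.

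There is no substantive obstacle of its own here: all the analytic and ergodic-theoretic difficulty is absorbed into Theorem~\ref{T:deterministic}. The two points that require attention are (a) in the two-point case one must invoke the strong-aperiodicity version of Theorem~\ref{T:deterministic} rather than the $f_1\cdots f_\ell\nsim\chi$ version, since $f^2$ is the principal character; and (b) the (elementary) shift-invariance of logarithmic averages, which reduces everything to strictly positive pairwise-distinct shifts so that Theorem~\ref{T:deterministic} applies verbatim and simultaneously guarantees that the relevant limits exist.
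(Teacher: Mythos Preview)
Your proof is correct and follows exactly the approach the paper has in mind: the paper does not spell out a proof of Theorem~\ref{T:sign} but simply says it is a consequence of Theorem~\ref{T:deterministic} via the argument in \cite[Corollary~1.10(i)]{TT18}, and that argument is precisely the indicator expansion $\prod_j\frac{1+\epsilon_j f(a(m+n_j))}{2}$ followed by the vanishing of all nonconstant correlations. Your case split---using the $f_1\cdots f_\ell\nsim\chi$ clause for $|I|=1,3$ and the strong-aperiodicity clause for $|I|=2$ since $f^2\equiv 1$---is exactly what is needed, and the shift-invariance remark to ensure all shifts lie in $\N$ is a fair bookkeeping point.
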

	Also, arguing as in \cite[Corollary~7.2]{TT18} we can deduce from Theorem~\ref{T:deterministic} that if
	 $n_1,n_2, n_3\in \N$ are distinct   and $\epsilon_0,\epsilon_1, \epsilon_2, \epsilon_3\in \{-1,+1\}$ are arbitrary,
then the set 	$$
	\{m\in \N \colon f(a(m))=\epsilon_0,\,  f(a(m+n_1))=\epsilon_1, \, f(a(m+n_2))=\epsilon_2, \, \, f(a(m+n_3))=\epsilon_3 \}$$
	 has positive lower (natural) density.

In the next statement, if $\bM$ is a sequence of intervals and $\Lambda\subset \N$, we define  $$
d_{\bM}(\Lambda):=\lE_{m\in\bM}\, {\bf 1}_{\Lambda}(m),
$$
 (note that we use logarithmic averages) assuming that the limit exists.
\begin{theorem}\label{T:sign1}
Let $\ell, r\in\N$. Let	$f_0,\ldots, f_\ell\colon \N\to \{-1,1\}$  be strongly aperiodic multiplicative functions that admit log-correlations on $\bM$.
Furthermore, if $n\in \N^r$ and $\epsilon_0,\ldots, \epsilon_\ell\in \{-1,+1\}$,
let ${\bf \epsilon}:=(\epsilon_0,\ldots, \epsilon_\ell)$ and
$$
\Lambda_{n,{\bf \epsilon}}:=\{m\in \N \colon f_0(m)=\epsilon_0, f_1(m+a_1(n))=\epsilon_1,\ldots, f_\ell(m+a_\ell(n))=\epsilon_\ell\}.$$
\begin{enumerate}[(i)]
	\item If the sequences  $a_1,\ldots,a_\ell \colon \N^r\to \N$ are  independent, then
$$
\lim_{|n|\to\infty}(d_{\bM}(\Lambda_{n,{\bf \epsilon}}))=2^{-(\ell+1)}.
$$

	\item If the sequences  $a_1,\ldots,a_\ell \colon \N^r\to \N$ are weakly independent, then
	$$
	\Dlim_{|n|\to\infty}(d_{\bf M}(\Lambda_{n,{\bf \epsilon}}))=2^{-(\ell+1)}.
	$$
\end{enumerate}
\end{theorem}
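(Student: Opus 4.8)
The plan is to expand the indicator function $\one_{\Lambda_{n,\epsilon}}$ into a linear combination of correlations of the $f_j$'s, isolate the constant term $2^{-(\ell+1)}$ coming from the trivial correlation, and show every other correlation tends to $0$ using Theorem~\ref{T:indaper}. Write $\epsilon=(\epsilon_0,\ldots,\epsilon_\ell)$ and set $a_0:=0$. Since $m+a_j(n)\geq 1$, each $f_j(m+a_j(n))\in\{-1,1\}$, so $\one_{\{f_j(m+a_j(n))=\epsilon_j\}}=\frac{1+\epsilon_j f_j(m+a_j(n))}{2}$ and hence
\[
\one_{\Lambda_{n,\epsilon}}(m)=\prod_{j=0}^{\ell}\frac{1+\epsilon_j f_j(m+a_j(n))}{2}
=2^{-(\ell+1)}\sum_{A\subseteq\{0,1,\ldots,\ell\}}\Big(\prod_{j\in A}\epsilon_j\Big)\prod_{j\in A}f_j(m+a_j(n)),
\]
where the empty product is $1$. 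Applying $\lE_{m\in\bM}$ (all these log-averages exist since the $f_j$ admit log-correlations on $\bM$) gives
\[
d_{\bM}(\Lambda_{n,\epsilon})=2^{-(\ell+1)}+2^{-(\ell+1)}\sum_{\emptyset\neq A\subseteq\{0,\ldots,\ell\}}\Big(\prod_{j\in A}\epsilon_j\Big)C_A(n),
\qquad C_A(n):=\lE_{m\in\bM}\prod_{j\in A}f_j(m+a_j(n)).
\]
It therefore suffices to show that $\lim_{|n|\to\infty}C_A(n)=0$ for every nonempty $A$ in case (i), and $\Dlim_{|n|\to\infty}C_A(n)=0$ for every nonempty $A$ in case (ii); the conclusion then follows since a finite sum of sequences converging to $0$ (resp.\ converging in density to $0$) has the same property.

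First I would dispose of the nonempty $A$ with $0\in A$, writing $A=\{0\}\cup B$ with $B\subseteq\{1,\ldots,\ell\}$. If $B=\emptyset$ then $C_A(n)=\lE_{m\in\bM}f_0(m)=0$ since $f_0$ is strongly aperiodic. If $B\neq\emptyset$, observe that any sub-collection of independent (resp.\ weakly independent) sequences is again independent (resp.\ weakly independent), because every nontrivial integer combination of $(a_j)_{j\in B}$ is a nontrivial integer combination of $(a_j)_{j=1}^{\ell}$. As all the $f_j$ are strongly aperiodic, Theorem~\ref{T:indaper}(i) (resp.\ (ii)), applied to the multiplicative functions $(f_j)_{j\in\{0\}\cup B}$ and the sequences $(a_j)_{j\in B}$, gives $\lim_{|n|\to\infty}C_A(n)=0$ (resp.\ $\Dlim_{|n|\to\infty}C_A(n)=0$).

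Next I would treat a nonempty $A$ with $0\notin A$; write $A=B$ and $j_0:=\min B$. Logarithmic averages over $\bM$ are insensitive to a fixed bounded shift $h$ (shifting by $h$ changes the average at scale $M_k$ by $O(|h|/\log M_k)$), so for each fixed $n$,
\[
C_A(n)=\lE_{m\in\bM}\prod_{j\in B}f_j(m+a_j(n))=\lE_{m\in\bM}\prod_{j\in B}f_j\bigl(m+(a_j(n)-a_{j_0}(n))\bigr).
\]
The sequences $b_j:=a_j-a_{j_0}$, $j\in B$, satisfy $b_{j_0}\equiv 0$, and $(b_j)_{j\in B\setminus\{j_0\}}$ is independent (resp.\ weakly independent): for integers $(k_j)_{j\neq j_0}$ not all zero, $\sum_{j\neq j_0}k_jb_j=\sum_{j\in B}k_j'a_j$ with $k_{j_0}':=-\sum_{j\neq j_0}k_j$ and $k_j':=k_j$ otherwise, a nontrivial integer combination. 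Granting that Theorem~\ref{T:indaper}(i)--(ii) extend to $\Z$-valued shift sequences (using the convention $f_j(n)=0$ for $n\le 0$; alternatively, partition $\N^r$ into the finitely many sets on which $\min_{j\in B}a_j$ is attained at a fixed index, where all the $b_j$ become $\N$-valued), applying it to $(f_j)_{j\in B}$ and the shift sequences $(b_j)_{j\in B}$ yields $C_A(n)\to 0$ in case (i) and $\Dlim_{|n|\to\infty}C_A(n)=0$ in case (ii). Combining the two cases completes the proof.

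The genuinely non-trivial input here is entirely contained in Theorem~\ref{T:indaper}; once that is in hand the argument is combinatorial bookkeeping, and I expect the only points needing attention to be the justification of the bounded-shift invariance of $\lE_{m\in\bM}$ and the handling of the possibly negative shifts $b_j(n)$ in the last case -- hence the remark on $\Z$-valued shifts, or the finite partition of $\N^r$.
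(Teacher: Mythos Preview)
Your argument is correct and is exactly the deduction the paper has in mind; the paper gives no explicit proof of Theorem~\ref{T:sign1}, stating only that the sign-pattern results follow easily from the results of the preceding subsections (i.e., from Theorem~\ref{T:indaper}).

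One simplification worth noting: your treatment of the case $0\notin A$ via the shift $m\mapsto m-a_{j_0}(n)$ works, but introduces the nuisance of $\Z$-valued shift sequences and the partition of $\N^r$. You can bypass this entirely by applying Theorem~\ref{T:indaper} directly with the constant multiplicative function $1$ in the role of $f_0$. Indeed, $\prod_{j\in A}f_j(m+a_j(n))=1\cdot\prod_{j\in A}f_j(m+a_j(n))$; the sub-collection $(a_j)_{j\in A}$ is independent (resp.\ weakly independent) for the reason you already gave, the collection $\{1\}\cup\{f_j:j\in A\}$ admits log-correlations on $\bM$ (these are just correlations of the original collection), and every $f_j$ with $j\in A$ is strongly aperiodic. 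This yields $C_A(n)\to 0$ (resp.\ in density) with no shifting and no sign issues.
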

Lastly, we state  an immediate consequence of the previous result.
\begin{corollary}
Let $\ell\in \N$. Suppose that  $a_1,\ldots,a_\ell\colon \N\to \N$ are   sequences with different growth rates, or $a_j(n)=[n\alpha_j]$, $j=1,\ldots, \ell$, where $\alpha_1,\ldots, \alpha_\ell\in \R$ are  rationally independent. Then  for all but finitely many $n\in \N$,  for all
$\epsilon_0,\ldots, \epsilon_\ell\in \{-1,+1\}$, there exist (infinitely many)  $m\in\N$ such that
$$
\lambda(m)=\epsilon_0,\,  \lambda(m+a_1(n))=\epsilon_1,\ldots,\  \lambda(m+a_\ell(n))=\epsilon_\ell.
$$
Furthermore, the same conclusion holds if in place of $\lambda$ we use  any other strongly aperiodic multiplicative function $f\colon \N\to \{-1,1\}$.
\end{corollary}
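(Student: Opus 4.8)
The plan is to read the corollary off directly from Theorem~\ref{T:sign1}; the only work is to check its hypotheses and to upgrade a positive-density statement to an existence statement. First I would fix a strongly aperiodic multiplicative function $f\colon\N\to\{-1,1\}$. For $f=\lambda$ this is classical: $\lambda\nsim\chi$ for every Dirichlet character $\chi$ (since $\lambda(p)=-1$ on primes and $\lE_{p\in\P}\chi(p)$ equals $1$ or $0$ according as $\chi$ is principal or not), and as noted in the discussion of Theorem~\ref{T:sign} this already forces strong aperiodicity. Since $f$ is bounded, a routine diagonal and compactness argument produces a sequence of intervals $\bM=([M_k])_{k\in\N}$ with $M_k\to\infty$ along which $f$ admits log-correlations; fix such an $\bM$.

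Next I would verify the independence hypothesis of Theorem~\ref{T:sign1}(i) in each of the two cases, noting that here $r=1$, so $|n|=n$ and ``for all but finitely many $n\in\N$'' just means ``for all large $n$''. If $a_1,\ldots,a_\ell\colon\N\to\N$ have pairwise different growth rates, they are independent by example (ii) following the definition of independence; if $a_j(n)=[n\alpha_j]$ with $\alpha_1,\ldots,\alpha_\ell$ rationally independent (hence positive, so that the $a_j$ are $\N$-valued for large $n$), they are independent by example (i) with all $\beta_j=0$. In either case Theorem~\ref{T:sign1}(i), applied with $f_0=f_1=\cdots=f_\ell=f$, gives
$$
\lim_{n\to\infty}d_{\bM}\big(\Lambda_{n,\epsilon}\big)=2^{-(\ell+1)}
$$
for every sign pattern $\epsilon=(\epsilon_0,\ldots,\epsilon_\ell)\in\{-1,+1\}^{\ell+1}$, where $\Lambda_{n,\epsilon}=\{m\in\N\colon f(m)=\epsilon_0,\ f(m+a_1(n))=\epsilon_1,\ \ldots,\ f(m+a_\ell(n))=\epsilon_\ell\}$.

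Finally I would translate this into the stated conclusion. Since $2^{-(\ell+1)}>0$, for each of the $2^{\ell+1}$ patterns $\epsilon$ there is $N_\epsilon\in\N$ with $d_{\bM}(\Lambda_{n,\epsilon})>0$ whenever $n\geq N_\epsilon$, and any $\Lambda\subset\N$ with $d_{\bM}(\Lambda)=\lE_{m\in\bM}\one_\Lambda(m)>0$ must be infinite, since otherwise the numerators $\sum_{m\in\Lambda\cap[M_k]}1/m$ stay bounded while the denominators $\sum_{m\in[M_k]}1/m\to\infty$, forcing $d_{\bM}(\Lambda)=0$. Taking $N=\max_\epsilon N_\epsilon$ (a finite maximum over the $2^{\ell+1}$ patterns) then gives, for every $n\geq N$ and every $\epsilon$, infinitely many $m$ realizing the prescribed sign pattern; the last sentence of the corollary needs nothing further, since the argument used nothing about $\lambda$ beyond its being a strongly aperiodic multiplicative function with values in $\{-1,1\}$. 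There is no genuine obstacle at the level of this corollary — all the content is in Theorem~\ref{T:sign1} — the only points requiring a word of care being the choice of a scale sequence $\bM$ along which log-correlations exist (pure compactness) and the elementary fact that positive logarithmic density forces infinitude.
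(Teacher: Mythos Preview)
Your proposal is correct and matches the paper's approach: the paper simply labels this an ``immediate consequence'' of Theorem~\ref{T:sign1}, and what you wrote is exactly the intended unpacking (independence via examples~(i)--(ii), then part~(i) of Theorem~\ref{T:sign1}, then positive $d_{\bM}$ forces infinitude). One cosmetic quibble: rational independence of $\alpha_1,\ldots,\alpha_\ell$ does not by itself imply positivity; rather, positivity is forced by the standing hypothesis that each $a_j$ maps $\N$ to $\N$, which is what you actually need to invoke example~(i).
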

We remark that a similar  result is unknown for linearly dependent sequences, for example if $\ell=4$ and $a_j(n)=jn$ for $j=1,2,3,4$
(see \cite{TT19} for related progress).

\subsection{Proof strategy} To prove Theorem~\ref{T:deterministic} we make  essential use of a structural result from \cite{FH18, FH19} for measure preserving systems (called Furstenberg systems) naturally associated with arbitrary multiplicative functions $f_1,\ldots, f_\ell\colon \N\to\U$.  This structural result is used implicitly in   the disjointness statement of Theorem~\ref{T:DisjointSeveral} and  gives the identities of Theorem~\ref{T:identity}. These identities allow us to   deduce Theorem~\ref{T:deterministic}
from the main results in \cite{T15} and \cite{TT18}.

To prove Theorems~\ref{T:indaper''} and \ref{T:indequi} we first reinterpret them in ergodic terms using Proposition~\ref{P:correspondence} and then use the  identities of Theorem~\ref{T:Tao2} in order
to reduce matters to proving the ergodic theorems stated in   Propositions~\ref{P:ET2} and \ref{P:ET1} respectively. To prove these
ergodic theorems we use the theory of characteristic factors (see \cite[Chapter~21]{HK18} for a
description of the general method) and qualitative equidistribution results on nilmanifolds.

Lastly, we remark that the number theoretic results of Matom\"aki and  Radziwi{\l}{\l} \cite{MR15} and Matom\"aki,  Radziwi{\l}{\l}, and Tao  \cite{MRT15}, on averages of multiplicative functions in short intervals, are used  in an essential way in all our results  except  the first part of
Theorem~\ref{T:deterministic}.
 In the ergodic setting, this  number theoretic input translates to the fact that a certain function  is orthogonal to the  Kronecker factor of the system (see Part~(i) of Proposition~\ref{P:nt}). Moreover, in  the proof of  Theorem~\ref{T:indequi} we use another fact from  \cite{MR15}, that the mean value of a real valued bounded multiplicative function  is essentially constant on the typical short interval, a property that fails for complex valued multiplicative functions.

\subsection{Some open problems}
In \cite{FH18,FH19} it was shown that any F-system of a collection of multiplicative functions $f_1,\ldots, f_\ell\colon \N\to \U$ has no irrational spectrum. We are unable to prove a similar result
for   collections $f_1\circ a,\ldots, f_\ell\circ a\colon \N\to \U$, where $a\colon \N\to \N$ is a deterministic sequence even if it is totally ergodic. In fact, there is a serious obstacle in proving this even when  $f_1=\cdots=f_\ell=\lambda$. The reason is that it  is consistent with  existing knowledge (though highly unlikely) that
an F-system of $\lambda$ on some sequence of intervals $\bM$  is isomorphic to the system defined by the transformation $T(x,y)=(x,y+x)$ acting on $\T^2$ with the Haar measure. If this is the case,  then  for $a(n)=[n\alpha]$, $n\in\N$, where $\alpha>1$ is irrational,  we can check that $e^{2\pi i \alpha}$ belongs to the spectrum of  the $F$-system of $\lambda\circ a$ on $\bM$. A similar obstacle prevents us from
proving  a variant of  Sarnak's conjecture for $\lambda\circ a$  for ergodic weights, namely that
$$
\lE_{n\in\N}\, \lambda(a(n))\, w(n)=0
$$
whenever $a\colon \N\to\N$ is  deterministic and totally ergodic
and $w\colon \N\to\U$ is  ergodic
 (in \cite{FH18} this was established when   $a(n)=n$).

In any case, the following statement seems plausible and if proved it would solve the problems just mentioned:
\begin{problem}
Let  $a\colon \N\to \N$ be a
 deterministic and  totally ergodic sequence
 and for $\ell\in\N$ let $f_1,\ldots, f_\ell\colon \N\to \U$ be multiplicative functions.
 Then   all the joint correlations of the sequences
      $f_1\circ a,\ldots, f_\ell\circ a$ coincide with the joint correlations of
      $f_1,\ldots, f_\ell$ (and thus the corresponding F-systems coincide).
\end{problem}
\begin{remark}
Equivalently, the conclusion asserts  that
$$
\lE_{m\in\bM} \prod_{j=1}^s g_j(a(m+n_j))=\lE_{m\in\bM} \prod_{j=1}^s g_j(m+n_j)
$$
for all $s\in \N$, $n_1,\ldots, n_s\in \N$, and $g_1,\ldots, g_s\in \{f_1,\ldots, f_\ell, \overline{f}_1,\ldots, \overline{f}_\ell\}$, whenever both limits $\lE_{m\in\bM}$ exist.
\end{remark}

Lastly we mention a problem related both to Theorem~\ref{T:deterministic} and Theorem~\ref{T:indequi}.
\begin{definition}
We say that
a collection of  sequences $a_1,\ldots, a_\ell \colon \N\to  \U$ is {\em jointly totally ergodic} if all its  F-systems are totally ergodic.
\end{definition}
 \begin{problem}
 	Let $\ell\in \N$.
 Suppose that the sequences  $a_1,\ldots,a_\ell \colon \N\to \N$
 are  deterministic, weakly independent,  and jointly totally ergodic.
 If $f_1,\ldots, f_\ell\colon \N\to [-1,1]$ are arbitrary  multiplicative functions,
then
\begin{equation}\label{E:ind}
\lE_{n\in \N}\prod_{j=1}^\ell f_j(a_j(n))=\prod_{j=1}^\ell\lE_{n\in\N}f_j(n).
\end{equation}
 \end{problem}
\begin{remarks}
$\bullet$ A particular case of interest is when $f_j=\lambda$ and $a_j(n)=[n\alpha_j]$, $j=1,\ldots, \ell$, where $1, \alpha_1,\ldots, \alpha_\ell$ are rationally independent real numbers.

$\bullet$ Identity \eqref{E:ind} is false for complex valued multiplicative functions. For example if $\ell=2$, $f_1(n)=n^i$, $f_2(n)=n^{-i}$, and $a_1(n)=[n\alpha]$, $a_2(n)=[n\beta]$ with $1,\alpha,\beta$ rationally independent, then $\lE_{n\in \N}f_1(n)=0$ but
$\lE_{n\in \N} \, f_1(a_1(n))\, f_2(a_2(n))=1$.

\end{remarks}

\section{Background in ergodic theory and number theory}\label{S:prel}

\subsection{Measure preserving systems}\label{SS:mps}
Throughout the article, we make the standard assumption that all probability  spaces $(X,\CX,\mu)$ considered are Lebesgue, meaning, $X$  can be given the structure of a compact metric space
and $\CX$ is its Borel $\sigma$-algebra.
A {\em measure preserving system}, or simply {\em a system}, is a quadruple $(X,\CX,\mu,T)$
where $(X,\CX,\mu)$ is a probability space and $T\colon X\to X$ is an invertible, measurable,  measure preserving transformation. We typically omit the $\sigma$-algebra $\CX$  and write $(X,\mu,T)$. The system is {\em ergodic} if the only sets that
are  invariant by $T$ have measure $0$ or $1$. It  is {\em totally ergodic} if the system $(X,\mu,T^n)$ is ergodic for every $n\in \N$.
 Throughout,  for $n\in \N$ we denote  by $T^n$   the composition $T\circ  \cdots \circ T$ ($n$ times) and let $T^{-n}:=(T^n)^{-1}$ and $T^0:=\id_X$. Also, for $F\in L^1(\mu)$ and $n\in\Z$ we denote by  $T^nF$ the function $F\circ T^n$.

In order to avoid unnecessary repetition,  we refer the reader to the article \cite{FH18} for some other standard notions from ergodic theory.
In particular, the reader will find in Section~2 and
in  Appendix~A of \cite{FH18} the definition
of the terms factor,  conditional expectation with respect to a factor,  (rational) Kronecker factor, isomorphism, inverse limit,  ergodic decomposition, joining, and disjoint systems; all these notions  are used in this article.

\subsection{Furstenberg systems associated with  bounded sequences}\label{SS:Furst}
For the purposes of this  article,
all averages in the definitions below are taken to be logarithmic. The reason is that we  invoke results like Theorems~\ref{T:DisjointSeveral} and \ref{T:Tao2} below that  are only known when the joint Furstenberg systems are defined using logarithmic averages. This limitation comes from the number theoretic input used in their proofs, in particular  the identities in \cite[Theorem~3.1]{FH19} that are based on the entropy decrement argument of Tao~\cite{T15}.
\begin{definition}
	Let $ \bM:=([M_k])_{k\in\N}$ be a sequence of intervals with $M_k\to \infty$.
	We say that a finite collection of bounded sequences $a_1,\ldots, a_\ell\colon \N\to \U$
	{\em admits log-correlations  on $\bM$}, if the   limits
	$$
	\lim_{k\to\infty} \lE_{m\in [M_k]}\,  \prod_{j=1}^s \tilde{a}_j(m+n_j)
	$$
	exist for all $s \in \N$, all   $n_1,\ldots, n_s\in \Z$,
	and all $\tilde{a}_1,\ldots,\tilde{a}_s\in \{a_1,\ldots, a_\ell,\overline{a}_1,\ldots,\overline{a}_\ell\}$.
\end{definition}
\begin{remark}
		Given $a_1,\ldots, a_\ell\colon \Z \to \mathbb{U}$,  using a diagonal argument, we get that every sequence of intervals $\bM=([M_k])_{k\in \N}$
	has a subsequence $\bM'=([M_k'])_{k\in\N}$, such that the sequences  $a_1,\ldots, a_\ell$  admit log-correlations on $\bM'$.
\end{remark}
For  every finite collection of sequences that admits log-correlations on a given sequence of intervals,  we use a  variant of the correspondence principle of Furstenberg~\cite{Fu77, Fu} in order
to associate a measure preserving system that captures the statistical properties of these sequences.
\begin{definition}\label{D:correspondence}
	Let $\ell\in\N$ and $a_1,\ldots, a_\ell\colon \Z\to\mathbb{U}$ be sequences that  admit
	log-correlations  on
	$\bM:=([M_k])_{k\in\N}$. We let  $\mathcal{A}:=\{a_1,\ldots,a_\ell\}$, $X:=(\mathbb{U}^\ell)^\Z$,  $T$ be the shift transformation on $X$,
	and $\mu$ be the weak-star limit of the sequence  $(\lE_{m\in[M_k]} \, \delta_{T^m a})_{k\in \N}$
	where $a:=(a_1,\ldots, a_\ell)$ is thought of as an element of $X$. We call $(X,\mu,T)$
	the {\em joint Furstenberg system
		associated with} $\mathcal{A}$ on  $\bM$, or  simply, the {\em F-system of $\mathcal{A}$ on $\bM$}.
\end{definition}
\begin{remark}
	If we are given sequences $a_1,\ldots, a_\ell\colon \N\to \mathbb{U}$, we extend them to $\Z$ in an arbitrary way; then  the measure $\mu$ will  not depend on the extension.
\end{remark}
We state explicitly  some  useful identities that are implicit in  the previous definition.

\begin{proposition}\label{P:correspondence}
	Let $\ell\in\N$ and  $a_1,\ldots, a_\ell\colon \Z\to \mathbb{U}$ be sequences that  admit
	log-correlations  on
	$\bM:=([M_k])_{k\in\N}$ and let $(X,\mu,T)$ be the corresponding F-system on $\bM$.
	For $j=1,\ldots, \ell$,   consider the functions $F_j\in C(X)$ defined by  $F_j(x):=x_j(0)$,
	where we assume that  $x\in X$ has the form $(x_1(n),\ldots, x_\ell(n))_{n\in\Z}$. Then
	\begin{equation}\label{E:correspondence}
	\lE_{m\in {\bM} }\,  \prod_{j=1}^\ell a_j(m+n_j) =\int  \prod_{j=1}^\ell
	T^{n_j}F_j \, d\mu
	\end{equation}
	for all  $n_1, \ldots, n_\ell\in \Z$,
\end{proposition}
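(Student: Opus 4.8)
The plan is to simply unwind the definition of the F-system: the identity \eqref{E:correspondence} is a bookkeeping consequence of the construction of $\mu$. First I would record that the weak-star limit $\mu$ in Definition~\ref{D:correspondence} is well defined, so that ``the corresponding F-system'' makes sense. The space of Borel probability measures on the compact metrizable space $X=(\U^\ell)^\Z$ is weak-star compact and metrizable, so $(\lE_{m\in[M_k]}\,\delta_{T^m a})_{k\in\N}$ has weak-star convergent subsequences; to see that the whole sequence converges it suffices to check that $\lim_{k\to\infty}\lE_{m\in[M_k]}\,g(T^m a)$ exists for $g$ ranging over a family whose linear span is dense in $C(X)$. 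I would take $g$ to be an arbitrary monomial $\prod_i x_{j_i}(p_i)\cdot\prod_{i'}\overline{x_{j_{i'}}(p_{i'})}$ in the coordinate functions $x\mapsto x_j(n)$ and their conjugates; since these separate the points of $X$, contain the constants, and form a conjugation-closed subalgebra, the Stone--Weierstrass theorem gives the required density. For such a $g$ one has $g(T^m a)=\prod_i a_{j_i}(m+p_i)\cdot\prod_{i'}\overline{a_{j_{i'}}(m+p_{i'})}$, and its logarithmic average over $m\in[M_k]$ converges as $k\to\infty$ precisely because $\mathcal{A}=\{a_1,\ldots,a_\ell\}$ admits log-correlations on $\bM$. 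Hence $\mu$ exists.

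Next I would evaluate the right-hand side of \eqref{E:correspondence}. Since the shift $T$ is a homeomorphism of $X$ and each $F_j$ is the bounded continuous coordinate projection $x\mapsto x_j(0)$, the function $G:=\prod_{j=1}^\ell T^{n_j}F_j$ lies in $C(X)$, so by weak-star convergence
$$
\int G\,d\mu=\lim_{k\to\infty}\lE_{m\in[M_k]}\,G(T^m a).
$$
Writing $a$ as the element of $(\U^\ell)^\Z$ with $a(n)=(a_1(n),\ldots,a_\ell(n))$ and using $(T^k x)(n)=x(n+k)$, one computes $(T^{n_j}F_j)(T^m a)=F_j(T^{m+n_j}a)=a_j(m+n_j)$, whence $G(T^m a)=\prod_{j=1}^\ell a_j(m+n_j)$. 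Therefore
$$
\int G\,d\mu=\lim_{k\to\infty}\lE_{m\in[M_k]}\,\prod_{j=1}^\ell a_j(m+n_j)=\lE_{m\in\bM}\,\prod_{j=1}^\ell a_j(m+n_j),
$$
the last equality being the definition of $\lE_{m\in\bM}$ together with the existence of this limit (again from the log-correlation hypothesis, with $s=\ell$ and $\tilde a_j=a_j$). This is exactly \eqref{E:correspondence}.

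I do not expect any real obstacle here. The only step requiring a little care is the verification that the defining weak-star limit exists, i.e.\ the Stone--Weierstrass plus compactness argument above; once that is in place the identity follows immediately by testing $\mu$ against the explicit continuous functions $\prod_{j=1}^\ell T^{n_j}F_j$. Note also that the convention $f(n)=0$ for $n\le 0$ and the arbitrary $\Z$-extension mentioned in the remarks play no role here, since the $a_j$ are already given on all of $\Z$ and the computation is purely formal.
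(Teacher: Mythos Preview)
Your proposal is correct and matches the paper's approach: the paper does not give an explicit proof, stating only that these identities are ``implicit in the previous definition,'' and your argument is precisely the natural unwinding of Definition~\ref{D:correspondence} via Stone--Weierstrass and weak-star convergence. The extra care you take in verifying that the defining weak-star limit actually exists is appropriate and fills in what the paper leaves tacit.
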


Note that a collection of sequences $a_1,\ldots, a_\ell\colon \Z\to \mathbb{U}$ may have several non-isomorphic  F-systems
depending on which sequence of intervals $\bM$ we use in the evaluation of  their joint correlations. We call any such system {\em an F-system of $a_1,\ldots, a_\ell$}.


\subsection{Multiplicative functions }
We denote by  $\mathcal{M}$ the set of all multiplicative functions $f\colon \N\to \mathbb{U}$, where $\mathbb{U}$ is the complex unit disc.	

We  make  use of the following   notion introduced in \cite{MRT15}:
\begin{definition}\label{D:StronglyAperiodic}
	Let $\D\colon \mathcal{M}\times \mathcal{M}\times \mathbb{R}_+ \to [0,\infty]$ be given by
	$$
	\D(f,g;N)^2:=\sum_{p\in \P\cap [N]} \frac{1}{p}\,\bigl(1-\Re\bigl(f(p) \overline{g(p)}\bigr)\bigr)
	$$
	and $M\colon \mathcal{M}\times \mathbb{N} \to [0,\infty)$ be given by
	$$
	M(f;N):=\min_{|t|\leq N} \D(f, n^{it};N)^2.
	$$
	The multiplicative function $f\in \mathcal{M}$ is
	{\em strongly  aperiodic} (or {\em strongly non-pretentious} using terminology from \cite{GS16})
	if $M(f\cdot\chi;N)\to \infty$ as $N
	\to \infty$ for every Dirichlet character $\chi$.
\end{definition}
It is known that the    M\"obius and the Liouville functions are strongly aperiodic (see \cite{T15}).
More generally, if  $f(p)$ is a nontrivial $d$-th root of unity for all $p\in\P$, then $f$ is strongly aperiodic (see for example  \cite[Corollary~6.2]{F18}).

The hypothesis of strong aperiodicity is useful for our purposes because it gives us access to the ergodic property stated in  Part~(i) of Proposition~\ref{P:nt} below and  also to
 the following result of Tao~\cite[Corollary~1.5]{T15}:
\begin{theorem}\label{T:Tao}
If $f, g\colon \N\to \U$ are multiplicative functions and at least one of them is  strongly aperiodic, then
	$$
	\lE_{m\in \N}\,  f(m)\, g(m+n)=0
	$$
	for every $n\in \N$.
\end{theorem}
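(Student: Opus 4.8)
This is Tao's two-point logarithmically averaged Elliott conjecture from \cite{T15}, and the plan is to follow its proof, which rests on three pillars: (i) the short-interval estimates of Matom\"aki--Radziwi{\l}{\l} and the averaged correlation estimate of Matom\"aki--Radziwi{\l}{\l}--Tao \cite{MRT15}; (ii) the entropy decrement argument, which upgrades the approximate dilation symmetry coming from multiplicativity into a genuine symmetry of local correlation profiles; and (iii) a Hal\'asz-type pretentious analysis, whose exceptional branch is closed off precisely by the strong aperiodicity hypothesis (it suffices that one of $f,g$ be aperiodic, since a large correlation would force both to be pretentious; WLOG say $f$ is the aperiodic one).

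First I would reduce, by a standard device, to the case that $f$ and $g$ are completely multiplicative, and --- using that the logarithmic average is shift-invariant up to $O(1/\log N)$ --- replace the target $\lE_{m\le N} f(m)g(m+n)$ by its locally averaged version $F_N(h):=\lE_{m\le N}\,\E_{0\le j<H}\, f(m+j)\,g(m+h+j)$ with $H=H(N)\to\infty$ slowly; it then suffices to show $F_N(n)\to 0$. I would keep track of the full local profile, namely the distribution, under $m$ chosen with logarithmic weight in $[N]$, of the vector $\bigl(f(m+j)g(m+h+j)\bigr)_{0\le j<H}$, since the entropy step operates on this random variable rather than on a single moment.

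Second --- the heart of the matter --- comes the entropy decrement argument. For a prime $p$, complete multiplicativity gives $f(pm)=f(p)f(m)$ and $g(pm+ph)=g(p)g(m+h)$, while dilating the base point by $p$ affects the logarithmic average only negligibly on $\{p\nmid m\}$ and its complement; hence the local profile at base point $m$ agrees with that at base point $pm$ up to the scalar twist $f(p)g(p)$ and up to an error supported on $p\mid m$. The Shannon entropy of the profile random variable is $O(H)$, so across a dyadic range of primes $p\in[P,2P]$ it cannot keep decreasing; pigeonholing on the entropy forces the profile to be \emph{exactly} $p$-dilation-invariant (not merely invariant up to the twist) for a density-one set of primes $p$ in a suitable range. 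Translated back, this says the correlation sequence $h\mapsto F_N(h)$ is approximately invariant under many mutually incommensurable shifts of $h$.

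Third, I would combine this approximate translation invariance with equidistribution to deduce that $F_N(h)$ is essentially constant in $h$ over a long window, and then invoke the averaged estimate of \cite{MRT15} --- this is where the short-interval input enters, and where strong aperiodicity of $f$ guarantees the Hal\'asz main term vanishes, so no pretentious exceptional case survives --- to conclude that the average of $F_N(h)$ over that window is $o(1)$. Hence the constant value is $o(1)$, giving $F_N(n)\to 0$ and therefore $\lE_{m\le N} f(m)g(m+n)\to 0$. The main obstacle is the second step: making the entropy decrement quantitatively honest, i.e.\ choosing the ranges of $p$ and the window length $H$ compatibly, controlling the error from the event $p\mid m$ uniformly through the iteration, and keeping the logarithmic weighting consistent under repeated dilations. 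The short-interval inputs and the equidistribution/pretentious bookkeeping are comparatively routine once that machinery is in place.
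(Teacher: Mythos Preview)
The paper does not prove this statement at all: Theorem~\ref{T:Tao} is quoted verbatim as ``the following result of Tao~\cite[Corollary~1.5]{T15}'' and is used as a black box (in the deduction of Theorem~\ref{T:deterministic} for $\ell=2$, and in the proof of Theorem~\ref{T:indaper'} for $\ell=1$). There is no argument to compare your proposal against.

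Your outline is a reasonable high-level summary of Tao's original proof in \cite{T15}, and the three ingredients you list---the short-interval input from \cite{MR15,MRT15}, the entropy decrement argument, and the Hal\'asz/pretentious endgame closed off by strong aperiodicity---are indeed the pillars. A couple of small corrections on the mechanics: the entropy decrement does not make the local profile \emph{exactly} $p$-dilation invariant for a density-one set of primes, but rather approximately invariant (in an $L^1$ or total-variation sense) for a well-chosen scale of primes; and the passage from ``many incommensurable approximate symmetries'' to ``constant in $h$'' is not quite how the argument runs---what one actually obtains after the entropy step is an expression of the two-point correlation as an average over primes of shifted correlations, to which the averaged Elliott/Chowla bound of \cite{MRT15} applies directly. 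But these are refinements of phrasing rather than gaps; the skeleton you describe is correct, and in any case the present paper simply imports the result.
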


\section{Proof of results about deterministic sequences}
 Theorem~\ref{T:deterministic} is a  consequence of the main results in \cite{T15} and \cite{TT18} and the following correlation identities:
\begin{theorem}\label{T:identity} Let $a\colon \N\to \N$ be a deterministic  and totally ergodic
 	sequence and  for $\ell \in \N$ let $f_1,\ldots, f_\ell\colon \N\to \U$
 	be multiplicative functions. Then every sequence of intervals $\bM$ has  a subsequence  $\bM'$
   such  that for every $n_1,\ldots, n_\ell\in \N$ all limits below exist and we have the identity
 $$
\lE_{m\in \bM'}  \prod_{j=1}^\ell f_j(a(m+n_j))=\lE_{n\in\bM'} \lE_{m\in \bM'} \prod_{j=1}^\ell f_j(m+a(n+n_j)).
 $$
 \end{theorem}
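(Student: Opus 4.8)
\medskip

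\emph{Strategy.} The plan is to transfer both sides of the identity, via the change of variables $N=a(m)$, to averages of $f_1,\dots,f_\ell$ sampled along shifts dictated by the return-time structure of $A:=a(\N)$, and then to decouple the two sources of randomness using the disjointness statement of Theorem~\ref{T:DisjointSeveral}. First I would let $\delta>0$ be the density of $A$ and, by a diagonal argument, pass to a subsequence $\bM'$ of $\bM$ on which $\one_A,f_1,\dots,f_\ell$ jointly admit log-correlations; let $(Z,\rho,R)$ be their joint F-system, let $z_0\in Z$ be the point representing $(\one_A,f_1,\dots,f_\ell)$, and let $g,F_1,\dots,F_\ell\in C(Z)$ record, respectively, the $\one_A$-value and the $f_j$-values at coordinate $0$. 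Since $a$ is strictly increasing with $\#(A\cap[N])=(\delta+o(1))N$, one has $a(M)\sim M/\delta$; since a logarithmic average over $[M]$ changes only by $o(1)$ when $[M]$ is replaced by $[cM]$ with $c$ in a bounded range (the extra range carries $O(1)$ logarithmic mass), all the joint correlations of $\one_A,f_1,\dots,f_\ell$ on $a(\bM'):=([a(M'_k)])_{k\in\N}$ coincide with those on $\bM'$; in particular $(Z,\rho,R)$ is also the joint F-system on $a(\bM')$. Note that $\bM'$ does not depend on $n_1,\dots,n_\ell$.

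\emph{Change of variables.} Next, putting $m(N):=\#(A\cap[N])$ (so $m(a(m))=m$), substituting $N=a(m)$ in the weighted sum defining $\lE_{m\in[M]}$, and using $1/m(N)=(1+o(1))/(\delta N)$ together with $\sum_{m\le M}\tfrac1m=\log a(M)+O(1)$, I obtain for every bounded $h\colon\N\to\C$
\[
\lE_{m\in\bM'}h(m)=\tfrac1\delta\,\lE_{N\in a(\bM')}\one_A(N)\,h\bigl(m(N)\bigr)+o(1).
\]
For $N\in A$ and each $j$, writing $r_j(N):=a(m(N)+n_j)-N$, one has $a(m(N)+n_j)=N+r_j(N)$ and $r_j(N)=\phi_{n_j}(R^N z_0)$, where $\phi_{n_j}(z)$ is the ($\rho$-a.e.\ finite) $n_j$-th return time of the $R$-orbit of $z$ to $\{g=1\}$; since $\int_{\{g=1\}}\phi_{n_j}\,d\rho\le n_j<\infty$, the quantity $\varepsilon_T:=\rho(\{g=1,\ \phi_{n_j}>T\text{ for some }j\})$ tends to $0$ as $T\to\infty$. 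Applying the display with $h(m)=\prod_{j=1}^\ell f_j(a(m+n_j))$ shows the left-hand side of the theorem equals $\tfrac1\delta\,\lE_{N\in a(\bM')}\one_A(N)\prod_{j}f_j(N+r_j(N))+o(1)$. Applying it with $h(n)=\lE_{m\in\bM'}\prod_{j}f_j(m+a(n+n_j))=\int\prod_{j}R^{a(n+n_j)}F_j\,d\rho$ (the correspondence principle) and using $R$-invariance of $\rho$ to replace $a(n+n_j)$ by $a(n+n_j)-a(n)=r_j(a(n))$ shows the right-hand side equals $\tfrac1\delta\,\lE_{N\in a(\bM')}\one_A(N)\int\prod_{j}R^{r_j(N)}F_j\,d\rho+o(1)$.

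\emph{Truncation and decoupling.} I would then fix $T\in\N$ and split each of the two averages above according to the value $\vec s=(s_1,\dots,s_\ell)\in[T]^\ell$ of $(r_1(N),\dots,r_\ell(N))$, incurring an error $O(\varepsilon_T)$ in the limit along $a(\bM')$. The event $\{N\in A,\ r_j(N)=s_j\ \forall j\}$ equals $c_{\vec s}(R^N z_0)$ for an explicit $\{0,1\}$-valued $c_{\vec s}\in C(Z)$ depending on only finitely many of the $\one_A$-coordinates, with $\int c_{\vec s}\,d\rho=\rho(\{g=1,\ \phi_{n_j}=s_j\ \forall j\})$. On the right-hand side, $R$-invariance of $\rho$ makes $\int\prod_j R^{s_j}F_j\,d\rho$ independent of $N$, and the defining weak-$*$ convergence of the F-system gives $\lE_{N\in a(\bM')}c_{\vec s}(R^N z_0)=\int c_{\vec s}\,d\rho$; thus the right-hand side becomes $\tfrac1\delta\sum_{\vec s\in[T]^\ell}\rho(\{g=1,\ \phi_{n_j}=s_j\ \forall j\})\int\prod_j R^{s_j}F_j\,d\rho+o_T(1)$. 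On the left-hand side, $\one_A(N)\,\one_{\{r_j(N)=s_j\ \forall j\}}\prod_j f_j(N+s_j)=\bigl(c_{\vec s}\cdot\prod_j R^{s_j}F_j\bigr)(R^N z_0)$, so the left-hand side becomes $\tfrac1\delta\sum_{\vec s\in[T]^\ell}\int c_{\vec s}\cdot\prod_j R^{s_j}F_j\,d\rho+o_T(1)$. The crux is then to invoke Theorem~\ref{T:DisjointSeveral}: the F-system of the deterministic and totally ergodic sequence $\one_A$ is disjoint from the joint F-system of $f_1,\dots,f_\ell$, so $\rho$ is the product of its $\one_A$-factor and its $(f_1,\dots,f_\ell)$-factor; since $c_{\vec s}$ is measurable over the former and $\prod_j R^{s_j}F_j$ over the latter, $\int c_{\vec s}\cdot\prod_j R^{s_j}F_j\,d\rho=\int c_{\vec s}\,d\rho\cdot\int\prod_j R^{s_j}F_j\,d\rho$. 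The left-hand side then reduces to the same expression as the right-hand side up to $o_T(1)$, and letting $T\to\infty$ — which also forces all the limits in the statement to exist, since along $a(\bM')$ the $\limsup$ and $\liminf$ differ by $O(\varepsilon_T)$ — finishes the argument.

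\emph{Main obstacle.} The whole argument hinges on Theorem~\ref{T:DisjointSeveral}, the disjointness of the F-system of a deterministic, totally ergodic sequence from the joint F-system of bounded multiplicative functions; this in turn rests on the structural description of the latter systems (no irrational spectrum, ergodicity of the rational part) from \cite{FH18,FH19}, and it is exactly here that the total ergodicity of $a$ is used, to preclude common rational eigenvalues. The two remaining ingredients — the logarithmic change of variables $N=a(m)$ and the truncation of the a.e.-finite but unbounded return times $r_j$ — are routine once this disjointness is in hand.
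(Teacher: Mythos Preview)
Your proposal is correct and follows essentially the same route as the paper. Both arguments (i) perform the change of variables $N=a(m)$ to rewrite $\lE_{m}\prod_j f_j(a(m+n_j))$ as a logarithmic average along $A$ of a function coupling the $f_j$'s to the return-time structure of $\one_A$, (ii) recognize the resulting limit as an integral against a joining of an F-system of $\one_A$ with an F-system of $f_1,\dots,f_\ell$, and (iii) invoke Theorem~\ref{T:DisjointSeveral} to factorize this joining as a product, which decouples the two pieces and yields the identity. The only real difference is in the handling of the unbounded return times: the paper packages the coupling into a single function $F(x,y)=\prod_j x_j(\tau_y(n_j))\cdot\one_{y(0)=1}$ on the product shift $X\times Y$, shows it is continuous off the countable (hence $\nu$-null) set $Z^*$ of sequences with finitely many $1$'s, and applies Fubini directly; you instead truncate the return times at a cutoff $T$, work with genuinely continuous cylinder functions $c_{\vec s}$, and let $T\to\infty$. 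Your version is slightly more hands-on but avoids the Portmanteau-type step for a.e.-continuous observables; the paper's version is slicker once that step is in place.

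One small remark: your justification of $\varepsilon_T\to 0$ via the bound $\int_{\{g=1\}}\phi_{n_j}\,d\rho\le n_j$ tacitly uses a Kac identity whose exact constant depends on the measure of the set swept out by $\{g=1\}$. It is simpler, and sufficient, to note that Poincar\'e recurrence gives $\phi_{n_j}<\infty$ $\rho$-a.e.\ on $\{g=1\}$, which is all the truncation needs.
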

\begin{remark}
The identity fails if we do not assume that  $f_1,\ldots, f_\ell$ are  multiplicative. It also fails if we remove the assumption that $a\colon \N\to \N$ is deterministic, or if we replace the total ergodicity assumption with ergodicity.
\end{remark}
\begin{proof}[Proof of Theorem~\ref{T:deterministic} assuming Theorem~\ref{T:identity}]
Let $f_1,\ldots, f_\ell$ be multiplicative functions that satisfy the assumptions of Theorem~\ref{T:deterministic}.
Arguing by contradiction, suppose that \eqref{E:Odd} fails. Then there exist a sequence of intervals $\bM$ and $n_1,\ldots, n_\ell\in \N$ such that
$$
\lE_{m\in \bM}\prod_{j=1}^\ell f_j(a(m+n_j))\neq 0.
$$	
By Theorem~\ref{T:identity} the sequence of intervals $\bM$ has a subsequence $\bM'$ such that all limits below exist and we have
\begin{equation}\label{E:nonzero}
\lE_{n\in\bM'} \lE_{m\in \bM'} \prod_{j=1}^\ell f_j(m+a(n+n_j))\neq 0.
\end{equation}
Using the main result in  \cite{TT18} if  $f_1\cdots f_\ell\nsim \chi$ for every Dirichlet character $\chi$,  and the main result in  \cite{T15} (see Theorem~\ref{T:Tao} in this article) if
 $\ell=2$, $n_1\neq n_2$ (note that then $a(n+n_1)\neq a(n+n_2)$), and either $f_1$ or $f_2$ is strongly aperiodic,  we get that
$$
 \lE_{m\in \bM'} \prod_{j=1}^\ell f_j(m+a(n+n_j))=0
 $$
 for every $n\in \N$. This contradicts \eqref{E:nonzero} and completes the proof.
\end{proof}

So our  goal is to prove Theorem~\ref{T:identity}.

\subsection{Setup}\label{SS:Setup} We  prove  Theorem~\ref{T:identity} via a disjointness argument. Our initial setup
loosely follows the one in  \cite[Lemma~4.1]{K73}. With $\Z_+$ we denote the set of non-negative integers.
 First, we establish a correspondence between strictly increasing sequences $a\colon \Z_+\to \Z_+$ and elements of the sequence space $Y:=\{0,1\}^{\Z_+}$.
Let
\begin{equation}\label{E:Z}
Z:=\{y\in Y\colon \sum_{i=0}^\infty y(i)<+\infty\}, \quad Z^*=Z\setminus \{ {\bf 0 } \},
\end{equation}
where ${\bf 0}$ denotes the element of $Y$ that has all its coordinates $0$.
For $y\in Y$ we let $\tau_y\colon \Z_+\to \Z_+$ be  defined by
$$
\tau_y(n):=\min\{k\in \Z_+\colon \sum_{i=0}^ky(i)=n+1\}
$$
if $y\not\in Z$ and $\tau_y(n)=0$ if $y\in Z$.
If   $a\colon \Z_+\to \Z_+$ is a strictly increasing sequence with range  $A$, then it defines the point
$y_a:={\bf 1}_A\in Y$. On the other hand,  we have $\tau_{y_a}(j)=a(j)$ for every  $j\in \Z_+$,
so the map $y\mapsto \tau_{y}$ sends the point $y_a\in \{0,1\}^{\Z_+}$ (note that $y_a\notin Z$)  to the sequence
$a\colon \Z_+\to\Z_+$.

Next, given sequences $b_1,\ldots, b_\ell\colon \Z\to \U$ and $a\colon \Z_+\to \Z_+$ we want to  reinterpret correlations of the sequences $b_1\circ a,\ldots, b_\ell\circ a$ in dynamical terms.
Let $X:=(\U^\ell)^\Z$ where we think of elements $x$ of $X$ as $\ell$-tuples $(x_1,\ldots, x_\ell)$ with $x_1,\ldots, x_\ell\in \U^\Z$. Let $R,S$ be the shifts on the spaces $X,Y$ correspondingly.
For $n\in \Z_+$ and $j=1,\ldots, \ell$, we define the function  $F_{j,n}\colon  X\times Y\to \U$ by
$$
F_{j,n}(x,y):=
x_j(\tau_y(n))\cdot {\bf 1}_{y(0)=1}, \quad x\in X,\,  y\in Y.
$$

Note that for every $n\in \Z_+$ the function $y\mapsto \tau_y(n)$ is continuous on $Y\setminus Z$. Using this it is easy to verify that  for
every $n\in \Z_+$ and $j\in \{1,\ldots, \ell\}$ we have
\begin{equation}\label{E:cont1}
F_{j,n}(x,y) \text{ is continuous on the set } X\times(Y\setminus Z^*)
\end{equation}
and
\begin{equation}\label{E:cont2}
\text{for every } y\in Y \text{ the function }	x\mapsto F_{j,n}(x,y) \text{ is continuous on } X.
\end{equation}

Let $y\notin Z$. An easy computation shows that  for every  $m\in \N$ and $n\in \Z_+$ we have
$$
\tau_{S^my}(n)+m=\tau_y\big(\sum_{i=0}^{m-1}y(i)+n\big).
$$
Combining the last two identities we get for every $j\in \{1,\ldots,\ell\}$ and every  $m\in \N$ and $n\in \Z_+$ that
\begin{equation}\label{E:id1}
F_{j,n}(R^mx,S^my)=
x_j\big(\tau_y\big(\sum_{i=0}^{m-1}y(i)+n\big)\big)\cdot {\bf 1}_{y(m)=1}.
\end{equation}
Hence,  for every $m\in \N$ and $n_1,\ldots, n_\ell\in \Z_+$ we have that
$$
\prod_{j=1}^\ell F_{j,n_j}(R^mx,S^my)=
\prod_{j=1}^\ell x_j\big(\tau_y\big(\sum_{i=0}^{m-1}y(i)+n_j\big)\big)\cdot {\bf 1}_{y(m)=1}.
$$
Therefore, if we let $k_y(0):=0$ and
$$
k_y(m):=\sum_{j=0}^{m-1}y(j), \quad m\in \N,
$$
we have for every $M\in \N$ and $c(0),\ldots, c(k_y(M))\in \C$  that
\begin{equation}\label{E:id2}
\sum_{m=0}^M c(k_y(m)) \, \prod_{j=1}^\ell F_{j,n_j}(R^mx,S^my)=
\sum_{m=0}^{k_y(M)}c(m)\, \prod_{j=1}^\ell x_j(\tau_y(m+n_j)).
\end{equation}


 Let now $b_1,\ldots, b_\ell\colon \Z\to \U$ be arbitrary  sequences and $a\colon\Z_+\to\Z_+$ be a strictly increasing sequence with range a set of  density $\alpha>0$, or equivalently,
   \begin{equation}\label{E:id3}
   \lim_{m\to\infty}\frac{k_{y_a}(m)}{m}=\alpha>0.
   \end{equation}
Clearly $y_a\notin Z$.     We let $b:=(b_1,\ldots, b_\ell)\in X$. Using that
 $\tau_{y_a}(j)=a(j)$, $j\in \Z_+$, equation  \eqref{E:id3}, and the scale invariance of logarithmic averages,
  we deduce using  \eqref{E:id2}   for $c(m):=\frac{1}{m}$ if $m\in \N$ and $c(0):=0$,  that if for some sequence of intervals $\bM$  the limit $\lE_{m\in \bM}$ on the left hand side below exists, then the same holds for the limit on the right hand side and we have the identity
\begin{equation}\label{E:corid}
\lE_{m\in \bM} \prod_{j=1}^\ell F_{j,n_j}(R^mb,S^my_a)=
\alpha \, \lE_{m\in \bM}\prod_{j=1}^\ell b_j(a(m+n_j)).
\end{equation}
This completes the needed dynamical reinterpretation of the correlations of the sequences $b_1\circ a, \ldots, b_\ell\circ a$ that will be used shortly.

\subsection{Proof of Theorem~\ref{T:identity}}
The first ingredient in the proof of Theorem~\ref{T:identity} is the following result:
  \begin{proposition}\label{P:identity} Let  $a\colon \N\to \N$ be a strictly increasing sequence with range a set  $A$ of positive  density.  Let $\ell\in \N$ and suppose that all the  F-systems  of the sequences $b_1,\ldots, b_\ell\colon \N\to \U$ are disjoint from all the F-systems of ${\bf 1}_{A}$. Then every sequence of intervals $\bM$ has  a subsequence  $\bM'$
   such  that for every $n_1,\ldots, n_\ell\in \N$ all limits below exist and we have the identity
  	$$
  	\lE_{m\in \bM'}  \prod_{j=1}^\ell b_j(a(m+n_j))=\lE_{n\in\bM'} \lE_{m\in \bM'} \prod_{j=1}^\ell b_j(m+a(n+n_j)).
  	$$
  \end{proposition}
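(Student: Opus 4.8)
The plan is to exploit the dynamical reinterpretation built in Section~\ref{SS:Setup}. Fix a strictly increasing sequence $a\colon\N\to\N$ whose range $A$ has density $\alpha>0$, and work with the product space $X\times Y$ together with the transformation $R\times S$ and the functions $F_{j,n}$. The starting point is identity \eqref{E:corid}: for any sequence of intervals $\bM$ along which the relevant logarithmic average of the left-hand side exists,
$$
\lE_{m\in\bM}\prod_{j=1}^\ell F_{j,n_j}(R^mb,S^my_a)=\alpha\,\lE_{m\in\bM}\prod_{j=1}^\ell b_j(a(m+n_j)).
$$
First I would pass, via a diagonal argument, to a subsequence $\bM'$ of $\bM$ on which: (a) $b_1,\dots,b_\ell$ admit log-correlations and so possess a well-defined F-system $(X,\mu,R)$; (b) ${\bf 1}_A$ admits log-correlations and possesses its F-system $(Y,\nu,S)$; (c) the joint point $(b,y_a)\in X\times Y$ equidistributes under $R\times S$ in the logarithmic sense toward some joining $\rho$ of $\mu$ and $\nu$; and (d) all the finitely many scalar limits appearing in the claimed identity exist. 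The hypothesis that every F-system of $b_1,\dots,b_\ell$ is disjoint from every F-system of ${\bf 1}_A$ forces $\rho=\mu\times\nu$.

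Next I would evaluate both sides of the desired identity as integrals against $\mu\times\nu$. For the left-hand side, apply the equidistribution along $\bM'$ to the function $\prod_j F_{j,n_j}$. There is a genuine technical point here: $F_{j,n}$ is only continuous off the null set $X\times Z^*$ (see \eqref{E:cont1}), so equidistribution of the point $(b,y_a)$ does not immediately give convergence of the averages of $\prod_j F_{j,n_j}$. I would handle this exactly as in the correspondence-principle arguments of \cite{K73}: since $y_a\notin Z$, the orbit $(S^m y_a)_{m}$ never enters $Z^*$ after finitely many steps, and $Z^*$ is a null set for $\nu$ (indeed for any $S$-invariant measure arising as such a limit, because the orbit closure of $y_a$ meets $Z^*$ only in a set of zero density — $Z^*$ is a countable union of points whose $S$-orbits escape to ${\bf 0}$); combining with \eqref{E:cont2} one gets that $\prod_j F_{j,n_j}$ is $\rho$-a.e.\ continuous, which is enough for the portmanteau-type conclusion
$$
\lE_{m\in\bM'}\prod_{j=1}^\ell F_{j,n_j}(R^mb,S^my_a)=\int\prod_{j=1}^\ell F_{j,n_j}\,d(\mu\times\nu).
$$
By Fubini this equals $\int_Y\big(\int_X\prod_j F_{j,n_j}(x,y)\,d\mu(x)\big)d\nu(y)$, and for $\nu$-a.e.\ $y$ the inner integral, by the very definition of the F-system $\mu$ of $b_1,\dots,b_\ell$ (Proposition~\ref{P:correspondence}), equals $\lE_{m\in\bM'}\prod_j b_j(m+\tau_y(n_j))$.

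Now comes the reciprocal move. For the right-hand side of the claimed identity, I would run the same construction with the roles reorganised: fix $n_1,\dots,n_\ell$ and, for each $n$, interpret $\lE_{m\in\bM'}\prod_j b_j(m+a(n+n_j))$ through the F-system $\mu$ as $\int_X\prod_j R^{a(n+n_j)}F_j\,d\mu$, where $F_j(x)=x_j(0)$; this is legitimate since $a(n+n_j)$ are fixed integers. Then averaging over $n\in\bM'$ (logarithmically) and using that $(y_a$ shifted$)$ equidistributes in $Y$ toward $\nu$ — more precisely using $\tau_{S^n y_a}(n_j)+\cdot$ relations analogous to \eqref{E:id1} together with the continuity properties of $\tau$ — one rewrites the double average as an integral of the function $y\mapsto\int_X\prod_j R^{\tau_y(n_j)}F_j\,d\mu$ against $\nu$. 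But $\int_X\prod_j R^{\tau_y(n_j)}F_j\,d\mu$ is exactly the inner integral from the previous paragraph, so both sides of the identity equal $\int_Y\big(\int_X\prod_j F_{j,n_j}(x,y)\,d\mu(x)\big)d\nu(y)$ and we are done (the factors of $\alpha$ cancel, appearing on both sides via \eqref{E:corid} and its analogue). The delicate bookkeeping — keeping track of the discrepancy between $\tau_{S^n y_a}$ and a shift of $\tau_{y_a}$, and controlling the $\nu$-null set $Z^*$ when integrating a function that is only a.e.\ continuous — is where the real work lies; the disjointness hypothesis does all the conceptual heavy lifting by collapsing the joining to a product, and everything else is the Kamae--Weiss-style continuity-and-equidistribution argument adapted to logarithmic averages.

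The main obstacle I anticipate is precisely item (c)–(d) combined with the a.e.-continuity issue: one must be careful that the single point $(b,y_a)$ really does equidistribute (after passing to a subsequence) toward an honest joining, and that the non-continuity of $F_{j,n}$ on $X\times Z^*$ does not obstruct passing from equidistribution of points to convergence of correlation averages. Everything else — the reduction of the joining to $\mu\times\nu$ via disjointness, and the symmetric rewriting of the two sides — is then essentially formal.
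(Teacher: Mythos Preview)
Your plan is correct and follows essentially the same route as the paper's proof: pass to a subsequence along which the empirical measure of $(R^mb,S^my_a)$ converges to a joining $\rho$, use disjointness to get $\rho=\mu\times\nu$, verify $\nu(Z^*)=0$ so that $F=\prod_j F_{j,n_j}$ is $\rho$-a.e.\ continuous, then unwind the resulting product integral via Fubini and the a.e.\ continuity of $y\mapsto\int_X F(x,y)\,d\mu(x)$ to recover the double average on the right (with the factor $\alpha$ arising on both sides from the indicator ${\bf 1}_{y(0)=1}$, exactly as you anticipate). The only cosmetic difference is that the paper organises the ``reciprocal move'' by staying inside the product-space picture---writing the right-hand side as $\lE_{n\in\bM'}\lE_{m\in\bM'}F(R^{n+m}b,S^ny_a)$ via \eqref{E:id1} and then applying the \eqref{E:corid}-style bookkeeping in the $n$ variable---rather than first translating to $\int_X\prod_j R^{a(n+n_j)}F_j\,d\mu$ as you do; the content is identical.
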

  \begin{proof}
We follow the notation established in Section~\ref{SS:Setup}. Using  a diagonal argument we can find a subsequence $\bM'=([M_k'])_{k\in\N}$ of $\bM$, such that  for every $n_1,\ldots, n_\ell\in \N$ all limits below exist and by \eqref{E:corid} we have the identity
\begin{equation}\label{E:a1}
\alpha \, \lE_{m\in \bM'}\prod_{j=1}^\ell b_j(a(m+n_j))=\lE_{m\in \bM'} F(R^mb,S^my_a),
\end{equation}
where $y_a:={\bf 1}_A$, $\alpha>0$ is the density of the set $A$, and
\begin{equation}\label{E:F}
F:=\prod_{j=1}^\ell F_{j,n_j}.
\end{equation}
By passing to a subsequence of $\bM'$, which we denote again by $\bM'$,   we can assume
 that the sequence of measures $(\lE_{m\in [M_k']} \, \delta_{(R^mb,S^my_a)})_{k\in \N}$
converges weak-star to a probability measure $\rho$ on $X\times Y$.  Let $\mu$ and $\nu$ be the marginals of $\rho$ and $R,S$ be the shift transformations  on $X$, $Y$ 
 respectively.   Then $\rho$ is $R\times S$-invariant and
$$
  \mu=\lE_{m\in \bM'}\, \delta_{R^mb},\quad   \nu=\lE_{m\in \bM'}\, \delta_{S^my_a}, \quad \rho=\lE_{m\in \bM'}\, \delta_{(R^mb,S^my_a)}
$$
 where all  implicit limits are  weak-star limits.
 Then   $(X,\mu, R)$ is an $F$-system  of the sequences $b_1,\ldots, b_\ell$ and $(Y,\nu,S)$ is an  F-system of $y_a={\bf 1}_{A}$. By assumption, the two systems are disjoint, hence
 \begin{equation}\label{E:disjoint}
 \rho=\mu\times \nu.
\end{equation}

Next, we claim   that the set $Z^*$, defined in \eqref{E:Z}, satisfies $\nu(Z^*)=0$.
Indeed, note that for $y\in Z^*$ the sets $T^{-n}\{y\}$, $n\in\N$, are disjoint.
Using the shift invariance of the probability measure $\nu$ we deduce that $\nu(\{y\})=0$ for every $y\in Z^*$,
and since $Z^*$ is a countable set, we conclude that $\nu(Z^*)=0$.

 Since $\nu(Z^*)=0$ and by  \eqref{E:cont1}  the function $F$ is continuous on $X\times (Y\setminus  Z^*)$ and $\rho$ is a joining of $\mu$ and $\nu$,   the function $F$ is continuous  $\rho$-almost everywhere.
Hence,
$$
\lE_{m\in \bM'} F(R^mb,S^my_a)=\int F(x,y) \, d\rho(x,y)=\int \Big(\int F(x,y)\, d\mu(x)\Big)\, d\nu(y)
$$
where the last identity follows from  \eqref{E:disjoint} and Fubini's theorem.
Since by  \eqref{E:cont1}  the function $F$ is continuous on $X\times (Y\setminus  Z^*)$, we get using  the bounded  convergence theorem  that the function $G\colon Y\to \C$ defined by $G(y):=\int F(x,y)\, d\mu(x)$, $y\in Y$,  is continuous on $Y\setminus Z^*$. Since $\nu(Z^*)=0$, the function  $G$ is continuous  $\nu$-almost everywhere, so we have
$$
\int \Big(\int F(x,y)\, d\mu(x)\Big)\, d\nu(y)=\lE_{n\in\bM'}G(S^ny_a)=\lE_{n\in\bM'}\int F(x,S^ny_a)\, d\mu(x).
$$
Moreover,  by  \eqref{E:cont2}, for every fixed $y\in Y$ the function $x\mapsto F(x,y)$ is continuous on $X$, hence for every $n\in \N$ we have
$$
\int F(x,S^ny_a)\, d\mu(x)=\lE_{m\in\bM'}F(R^m b,S^ny_a)=
\lE_{m\in\bM'}F(R^{n+m} b,S^ny_a).
$$
Combining the last three identities we get
\begin{equation}\label{E:a2}
\lE_{m\in \bM'} F(R^mb,S^my_a)=\lE_{n\in\bM'}\lE_{m\in\bM'}F(R^{n+m}b,S^ny_a).
\end{equation}
Using the definition of the function $F$ in  \eqref{E:F}, identity \eqref{E:id1}, and the fact that $\tau_{y_a}(j)=a(j)$, $j\in \Z_+$,   we get for every $m, n\in\N$ that
\begin{equation}\label{E:id3'}
F(R^{n+m}b,S^n y_a)=\prod_{j=1}^\ell b_j\big(m+a\big(\sum_{i=0}^{n-1}y_a(i)+n_j\big)\big)\cdot {\bf 1}_{y_a(n)=1}.
\end{equation}
Using \eqref{E:id3'} and arguing exactly as in the last part of Section~\ref{SS:Setup} we deduce that
\begin{equation}\label{E:a3}
\lE_{n\in\bM'}\lE_{m\in\bM'}F(R^{n+m}b,S^ny_a)=\alpha \,  \lE_{n\in\bM'} \lE_{m\in \bM'} \prod_{j=1}^\ell b_j(m+a(n+n_j)).
\end{equation}
Combining \eqref{E:a1}, \eqref{E:a2},  \eqref{E:a3} (and using that $\alpha \neq 0$), we get the asserted identity.
\end{proof}


The next result is a crucial element in the proof of Theorem~\ref{T:deterministic} and follows by  combining  the structural result of \cite[Theorem~1.5]{FH19} with the disjointness statement of \cite[Proposition~3.12]{FH18}.
\begin{theorem} \label{T:DisjointSeveral}
	All  F-systems of any bounded collection of  multiplicative functions with values on the complex unit disc  are disjoint from all zero entropy  totally ergodic systems.
\end{theorem}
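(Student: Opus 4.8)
The plan is to obtain Theorem~\ref{T:DisjointSeveral} by combining the structure theorem for Furstenberg systems of multiplicative functions from \cite[Theorem~1.5]{FH19} with the disjointness criterion of \cite[Proposition~3.12]{FH18}. Fix a bounded collection $f_1,\ldots,f_\ell\colon\N\to\U$ of multiplicative functions, a sequence of intervals $\bM$ on which they admit log-correlations, let $(X,\mu,T)$ be the associated F-system, and let $(Y,\nu,S)$ be an arbitrary totally ergodic system of zero entropy. We must show that the only joining of $(X,\mu,T)$ with $(Y,\nu,S)$ is $\mu\times\nu$.

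The first step is to record what \cite[Theorem~1.5]{FH19} provides. It describes the structure of any F-system of $f_1,\ldots,f_\ell$; in the form we need, it says that such a system is an extension of its rational Kronecker factor, a procyclic system $(Z,m,R)$, and that this extension has completely positive entropy relative to $Z$ (equivalently, the rational Kronecker factor is the Pinsker factor; in particular there is no irrational spectrum). Since the F-system $(X,\mu,T)$ need not be ergodic, before applying this I would pass to the ergodic decomposition of $\mu$: disjointness from the ergodic system $(Y,\nu,S)$ follows once each ergodic component of $(X,\mu,T)$ is disjoint from it, because a given joining decomposes into ergodic joinings, each of which projects to $\nu$ on $Y$ and to an ergodic component on $X$. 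So it suffices to treat an ergodic F-system, and I expect the structural properties above to be inherited by the ergodic components with only routine checking.

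For the ergodic case, take an ergodic joining $\lambda$ of $(X,\mu,T)$ and $(Y,\nu,S)$. The procyclic factor $(Z,m,R)$ has only rational eigenvalues, whereas $(Y,\nu,S)$, being totally ergodic, has no non-trivial rational eigenvalue; a standard disintegration argument over $Z$ then shows that these two systems are disjoint, so under $\lambda$ the factor $Z$ of $X$ is independent of $Y$. At this point \cite[Proposition~3.12]{FH18} applies: it is the relative analogue of the classical fact that completely positive entropy is disjoint from zero entropy, and it asserts that an extension $X\to Z$ with relative completely positive entropy is relatively disjoint over $Z$ from every zero-entropy system; combined with the independence of $Z$ and $Y$ just established, this forces $\lambda=\mu\times\nu$. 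This yields the disjointness, and hence the theorem.

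The substantive content — the structure theorem and the relative disjointness statement — is imported wholesale from \cite{FH18, FH19}, so the step I expect to demand the most care is a bookkeeping one: checking that the conclusion of \cite[Theorem~1.5]{FH19} is stated in, or can be put into, exactly the shape required as a hypothesis by \cite[Proposition~3.12]{FH18}, and handling the non-ergodicity of $(X,\mu,T)$ by the ergodic-decomposition reductions indicated above, applied both to $\mu$ and to the joining $\lambda$.
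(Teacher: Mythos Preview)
Your proposal is correct and follows exactly the route the paper indicates: the paper does not give a detailed proof of this theorem but simply states that it ``follows by combining the structural result of \cite[Theorem~1.5]{FH19} with the disjointness statement of \cite[Proposition~3.12]{FH18}.'' You have supplied precisely this combination, together with the natural bookkeeping (ergodic decomposition, disjointness of procyclic and totally ergodic systems) that the paper leaves implicit.
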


Combining the previous two results we can now prove Theorem~\ref{T:identity}.
\begin{proof}[Proof of Theorem~\ref{T:identity}] By assumption,  the sequence $a\colon\N\to \N$ is strictly increasing and its range, which we denote by $A$, has positive  density.
Also  by assumption, all F-systems of  ${\bf 1}_A$ have zero entropy and are  totally ergodic.  It follows from Theorem~\ref{T:DisjointSeveral} that
	 all  F-systems  of the collection  $f_1,\ldots, f_\ell\colon \N\to \U$ are disjoint from all F-systems of ${\bf 1}_{A}$. Hence, Proposition~\ref{P:identity} applies and gives that the conclusion of Theorem~\ref{T:identity} holds.
\end{proof}

\section{Proof of results about independent sequences}\label{S:independent}

 In this section we prove Theorems~\ref{T:indaper''}, \ref{T:indaper},  \ref{T:indequi}.

We start with a reduction, we show that   Theorem~\ref{T:indaper} follows from the following more general result:
\begin{theorem}\label{T:indaper'}
	Let $\ell\in \N$. Let
	$f_0,\ldots, f_\ell\colon \N\to \U$  be multiplicative functions that admit log-correlations on $\bM$ and suppose that at least one of them is strongly aperiodic.
Furthermore, let $a_0:=0$  and   $a_1,\ldots,a_\ell \colon \N^r\to \N$ be sequences, and  $R$ be an infinite subset of $\N^r$  such that the set $S:=\{(a_1(n),\ldots, a_\ell(n))\colon n\in R\}$ has independent elements (see definition in Section~\ref{SS:independent}).
Then
	\begin{equation}\label{E:inS}
\lim_{|n|\to \infty, n\in R}\big(\lE_{m\in\bM}\prod_{j=0}^\ell f_j(m+a_j(n))\big)=0.
\end{equation}
\end{theorem}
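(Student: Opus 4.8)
The plan is to recast the correlation sum in ergodic-theoretic terms via Furstenberg correspondence, to use the multiplicativity of the $f_j$ --- through the entropy decrement identities of Theorem~\ref{T:Tao2} --- to remove the obstruction coming from Elliott-type correlations, and then to invoke an ergodic theorem proved by the method of characteristic factors. Note first that the case $r=\ell$, $a_j$ the $j$-th coordinate projection, and $R=S$ recovers Theorem~\ref{T:indaper''}, so it suffices to treat Theorem~\ref{T:indaper'}.

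First I would pass to ergodic theory. Since $f_0,\ldots,f_\ell$ admit log-correlations on $\bM$, Proposition~\ref{P:correspondence} produces an F-system $(X,\mu,T)$ of $f_0,\ldots,f_\ell$ on $\bM$ together with continuous functions $F_0,\ldots,F_\ell$ on $X$ such that, for every $n\in\N^r$ (using $a_0=0$), \[ \lE_{m\in\bM}\prod_{j=0}^\ell f_j(m+a_j(n))=\int F_0\cdot\prod_{j=1}^\ell T^{a_j(n)}F_j\,d\mu. \] Hence it is enough to show that $\int F_0\cdot\prod_{j=1}^\ell T^{a_j(n)}F_j\,d\mu\to 0$ as $|n|\to\infty$ with $n\in R$. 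The hypothesis that $S=\{(a_1(n),\ldots,a_\ell(n))\colon n\in R\}$ has independent elements is used only here: it ensures that for every non-zero $k\in\Z^\ell$ the combination $\sum_{j}k_j a_j(n)$ vanishes for $n\in R$ only on a set whose image in $S$ is finite, which is precisely the non-degeneracy the equidistribution step below requires.

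Next I would bring in the structure of $(X,\mu,T)$. Two facts are at our disposal: by \cite{FH18,FH19} an F-system of a collection of multiplicative functions has no irrational spectrum; and since one of the $f_j$, say $f_{j_0}$, is strongly aperiodic, the number theoretic input of \cite{MRT15} in the form of Proposition~\ref{P:nt}(i) gives that $F_{j_0}$ is orthogonal to the Kronecker factor of $(X,\mu,T)$. I would then apply the identities of Theorem~\ref{T:Tao2}, which via Tao's entropy decrement argument relate the correlation with shifts $a_j(n)$ to averages over primes $p$ of the dilated correlations $\lE_{m\in\bM}\prod_{j=0}^\ell f_j(m+p\,a_j(n))$. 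The effect of this step is to convert the required pointwise-in-$n$ statement into an averaged one, and it reduces matters to the purely ergodic assertion of Proposition~\ref{P:ET2}: for a system with the two structural properties above, a function orthogonal to the Kronecker factor, and shift vectors running over a set with independent elements, the corresponding limit of prime-averaged multiple correlations is $0$.

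Finally, Proposition~\ref{P:ET2} would be proved by the method of characteristic factors (cf.\ \cite[Chapter~21]{HK18}): one identifies an inverse limit of nilsystems that is characteristic for the prime polynomial averages in question, so that the limit is governed by the equidistribution of the orbits $p\mapsto(T^{p\,a_1(n)}x,\ldots,T^{p\,a_\ell(n)}x)$ on a product of nilmanifolds; qualitative equidistribution results on nilmanifolds, together with the independence of the shifts and the Kronecker-orthogonality of $F_{j_0}$ (with the absence of irrational spectrum ruling out a surviving eigenvalue contribution), then force the limit to vanish. The main obstacle, I expect, is exactly this last step: the correspondence and the entropy-decrement identity are applications of existing machinery, whereas pinning down the characteristic factor for these prime-dilated several-direction averages and running the nilmanifold equidistribution under only the independence hypothesis on the shifts is where the genuine work lies. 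It should be emphasized that multiplicativity is indispensable for the middle step --- the statement fails for general bounded sequences in place of multiplicative functions, even jointly totally ergodic ones --- so it is precisely the passage to dilated shifts, made possible by the entropy decrement argument, that circumvents the still-open Elliott conjecture.
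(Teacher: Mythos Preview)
Your proposal is correct and follows essentially the same route as the paper: Furstenberg correspondence (Proposition~\ref{P:correspondence}), then the entropy-decrement identities (Theorem~\ref{T:Tao2}), then Proposition~\ref{P:nt}(i) to get Kronecker-orthogonality of $F_{j_0}$, and finally the ergodic statement of Proposition~\ref{P:ET2} proved via characteristic factors and nilmanifold equidistribution. Two small remarks: the paper actually reduces Theorem~\ref{T:indaper'} \emph{to} Theorem~\ref{T:indaper''} (handling $\ell=1$ separately via Theorem~\ref{T:Tao}, and for $\ell\geq 2$ observing that independence of $S$ forces $|(a_1(n),\ldots,a_\ell(n))|\to\infty$ along $R$), rather than the reverse, though your direct approach works just as well; and the ``no irrational spectrum'' fact you invoke is not needed here, since strong aperiodicity already gives orthogonality to the full Kronecker factor, which is more than the rational-Kronecker orthogonality Proposition~\ref{P:ET2} requires.
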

\begin{proof}[Proof of Theorem~\ref{T:indaper} assuming Theorem~\ref{T:indaper'}.]
Part~$(i)$ of Theorem~\ref{T:indaper} follows at once, since for  independent sequences $a_1,\ldots, a_\ell$, for $R:=\N^r$,  the set $S$ in the statement of Theorem~\ref{T:indaper'}  has independent elements.
 Hence, \eqref{E:inS}  holds with $R=\N^r$,
so \eqref{E:full} holds.

We prove  Part~$(ii)$ of Theorem~\ref{T:indaper}.
Since the sequences $a_1,\ldots,a_\ell$ are weakly independent  all the sets $Z_{k_1,\ldots,k_\ell}:=\{n\in\N^r\colon k_1a_1(n)+\cdots +k_\ell a_\ell(n)=0\}$, $k_1,\ldots, k_\ell\in \Z$,  have zero density unless $k_1=\cdots=k_\ell=0$. Since the collection of all these sets is countable,   it is well known that there exists a subset $Z$ of $\N^r$ that has zero density and such that $Z_{k_1,\ldots, k_\ell}\setminus Z$ is finite  for all $k_1,\ldots,k_\ell\in \Z$ not all of them $0$.
 Then for  $R:=\N^r\setminus Z$
 the set $S$ in the statement of Theorem~\ref{T:indaper'}  has independent elements. Hence, \eqref{E:inS} holds for this set $R$, and since the complement of  $R$ has density zero, we get \eqref{E:zerodens}.
\end{proof}
Next we show that   Theorem~\ref{T:indaper'} follows from Theorem~\ref{T:indaper''}.
\begin{proof}[Proof of Theorem~\ref{T:indaper'} assuming Theorem~\ref{T:indaper''}.]
  If $\ell=1$, then the result follows from Theorem~\ref{T:Tao}. So we can assume that $\ell\geq 2$. In this case  we have  that $\lim_{n\to\infty, n\in R}|(a_1(n),\ldots, a_\ell(n))|=\infty$, because otherwise
	for $n\in R$ the vectors $(a_1(n),\ldots, a_\ell(n))$  attain some fixed value infinitely often,  and this easily contradicts our assumption that the set $S$ in the statement of Theorem~\ref{T:indaper'} has independent elements. Using this, we deduce  \eqref{E:inS}  from  \eqref{E:inS'}.
\end{proof}

Hence, it remains to prove Theorems~\ref{T:indaper''} and \ref{T:indequi}.

\subsection{Ergodic feedback from number theory}
We start by translating some input from number theory to useful ergodic properties.
\begin{proposition}\label{P:nt}
Let $\ell\in \N$. Let $(X,\mu,T)$ be an F-system  on $\bM$ of the multiplicative functions $f_1, \ldots, f_\ell\colon \N\to \U$ and  $F_1,\ldots, F_\ell$ be the functions of Proposition~\ref{P:correspondence}.
\begin{enumerate}[(i)]
\item \label{i} If $f_1$ is strongly aperiodic, then the function $F_1$ is orthogonal to the Kronecker factor of the system, meaning, it is orthogonal to all eigenfunctions of the system.

\item \label{ii} If $f_1$ is real valued, then the function $F_1$  satisfies $\E(F_1|\mathcal{I})=\int F_1\, d\mu$, where $\mathcal{I}:=\{A\in \CX\colon T^{-1}A=A\}$.
\end{enumerate}
\end{proposition}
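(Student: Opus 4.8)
The plan is to deduce both statements from number-theoretic input about multiplicative functions, transported to the dynamical setting via the correspondence identity \eqref{E:correspondence}. For Part~\eqref{i}, I would first recall that the Kronecker factor of $(X,\mu,T)$ is the closed linear span of the eigenfunctions, so it suffices to show that $\int F_1 \cdot \overline{\phi}\, d\mu = 0$ for every eigenfunction $\phi$ with $T\phi = e^{2\pi i\theta}\phi$. Approximating $\phi$ and using that $F_1(x) = x_1(0)$ while $T^n F_1(x) = x_1(n)$, the quantity $\int F_1 \cdot \overline{T^n F_1}\, d\mu$ equals $\lE_{m\in\bM}\, f_1(m)\,\overline{f_1(m+n)}$ by \eqref{E:correspondence}. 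The orthogonality to eigenfunctions then follows from the fact (a consequence of Tao's work \cite{T15}, via the entropy decrement / $U^2$-uniformity estimates, or more directly from \cite[Theorem~1.5 / Corollary~1.5]{T15}) that a strongly aperiodic multiplicative function has vanishing logarithmically-averaged correlation with every linear phase $e^{2\pi i\theta n}$, i.e. $\lE_{m\in\bM}\, f_1(m)\, e^{-2\pi i \theta m} = 0$ for all $\theta\in\R$; combined with Theorem~\ref{T:Tao} this forces the spectral measure of $F_1$ to have no atoms, hence $F_1 \perp $ Kronecker factor. I expect the cleanest route is: the spectral measure $\sigma_{F_1}$ of $F_1$ satisfies $\widehat{\sigma_{F_1}}(n) = \lE_{m\in\bM} f_1(m)\overline{f_1(m+n)}$, which by Theorem~\ref{T:Tao} equals $0$ for $n\neq 0$, so $\sigma_{F_1}$ is a multiple of Lebesgue measure on the circle and in particular has no atoms — so $F_1$ is orthogonal to every eigenfunction.

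For Part~\eqref{ii}, the ergodic statement $\E(F_1\mid\mathcal{I}) = \int F_1\, d\mu$ is equivalent (by the mean ergodic theorem applied along $\bM$, or directly by expanding the $L^2$-norm) to the assertion that $\lim_{N\to\infty}\E_{n\in[N]}\big|\lE_{m\in\bM} f_1(m)\overline{f_1(m+n)} - |\lE_{m\in\bM} f_1(m)|^2\big| = 0$, i.e. that the averaged two-point correlations of $f_1$ converge to the square of its mean. This is precisely the content of the Matom\"aki--Radziwi{\l}{\l} theorem on mean values of multiplicative functions in short intervals \cite{MR15}: for a real-valued multiplicative function $f_1\colon\N\to[-1,1]$, the mean value over a typical short interval is essentially constant, which yields exactly the decoupling of the shifted correlation into the product of two mean values. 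I would phrase this as: for real-valued $f_1$, the sequence $m\mapsto f_1(m)$ is, in the relevant $U^1$/weak sense, asymptotically equal along $\bM$ to its running short-interval average, and the latter is shift-invariant in the limit; passing through \eqref{E:correspondence} this says $T^n F_1$ and $F_1$ have the same limiting correlation behaviour as the constant $\int F_1\, d\mu$, giving $\E(F_1\mid\mathcal{I}) = \int F_1\, d\mu$.

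The main obstacle is Part~\eqref{ii}: it is not a soft ergodic-theory fact but relies essentially on the short-interval results of \cite{MR15}, and one must be careful that these are available in logarithmically-averaged form along an arbitrary sequence of intervals $\bM$ on which log-correlations exist — this is why the whole framework is set up with logarithmic averages. I would isolate the needed input as a clean lemma ("for real-valued $f\in\mathcal M$ with $f\colon\N\to[-1,1]$, $\lE_{m\in\bM}\, f(m)\overline{f(m+n)}$, averaged over $n\in[N]$, tends to $|\lE_{m\in\bM}f(m)|^2$") and cite \cite{MR15}, then spend the bulk of the argument translating it into the stated conditional-expectation identity via the standard fact that $\E(F_1\mid\mathcal I)$ is the $L^2(\mu)$-limit of $\frac1N\sum_{n=1}^N T^n F_1$. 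Part~\eqref{i}, by contrast, is essentially immediate once the spectral-measure reformulation is in place and Theorem~\ref{T:Tao} is invoked.
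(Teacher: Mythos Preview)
Your ``cleanest route'' for Part~\eqref{i} is correct but genuinely different from the paper's. The paper cites \cite[Theorem~B.1]{MRT15} (Fourier/local-$U^2$ uniformity of strongly aperiodic functions) and refers to \cite{F18} for the translation into orthogonality to the Kronecker factor. You instead use Theorem~\ref{T:Tao} to compute $\widehat{\sigma_{F_1}}(n)=\lE_{m\in\bM}f_1(m)\overline{f_1(m+n)}=0$ for $n\neq 0$, so $\sigma_{F_1}$ is a multiple of Lebesgue measure and $F_1$ has no eigenfunction component. This is slick and self-contained within the paper's stated results, and yields the stronger conclusion that $F_1$ has Lebesgue spectrum; the price is that Theorem~\ref{T:Tao} rests on Tao's entropy decrement argument, which is heavier input than the averaged short-interval results of \cite{MRT15}.

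For Part~\eqref{ii} your overall strategy matches the paper's, but your first reformulation is wrong as stated. The condition
\[
\lim_{N\to\infty}\E_{n\in[N]}\Big|\lE_{m\in\bM} f_1(m)\,\overline{f_1(m+n)} - \big|\lE_{m\in\bM} f_1(m)\big|^2\Big| = 0
\]
is \emph{not} equivalent to $\E(F_1\mid\mathcal{I})=\int F_1\,d\mu$: writing $G:=F_1-\alpha$ with $\alpha:=\int F_1\,d\mu$, the displayed condition reads $\E_{n\in[N]}|\langle T^nG,G\rangle|\to 0$, which by Wiener's lemma is equivalent to $G$ having continuous spectral measure, i.e.\ $G\perp$ Kronecker factor --- strictly stronger than the claimed $\E(G\mid\mathcal{I})=0$. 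What \cite{MR15} actually gives (and what the paper uses) is
\[
\lim_{N\to\infty}\lE_{m\in\bM}\Big|\E_{n\in[N]}f_1(m+n)-\alpha\Big|^2=0,
\]
which via \eqref{E:correspondence} becomes $\lim_N\int|\E_{n\in[N]}T^nF_1-\alpha|^2\,d\mu=0$, and then the mean ergodic theorem identifies the $L^2$-limit of $\E_{n\in[N]}T^nF_1$ as $\E(F_1\mid\mathcal{I})$. Your last paragraph and your ``clean lemma'' (without the absolute value) describe exactly this, so the fix is just to drop the absolute-value formulation and run the argument as the paper does.
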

\begin{remark}
Part \eqref{ii} fails for complex valued multiplicative functions. For example, if $f_1(n):=n^{i}$, $n\in \N$, then it can be shown that $\E(F_1|\mathcal{I})=F_1$ but $\int F_1\, d\mu=0$.
\end{remark}
\begin{proof}
For  \eqref{i}  the  number theoretic input needed  is \cite[Theorem~B.1]{MRT15} and the deduction can be found in  the proof of \cite[Proposition~5.1]{F18}.

For \eqref{ii} the  number theoretic input needed is \cite[Theorem~1]{MR15}.
It implies that
$$
\lim_{N\to\infty}\limsup_{M\to\infty}\E_{m\in [M]}\big|\E_{n\in[N]}f_1(n+m)-\alpha|^2=0
$$
where $\alpha:=\E_{n\in\N}f_1(n)$ (the limit is known to exist by a result of Wirsing~\cite{W67}).
This implies that
$$
\lim_{N\to\infty}\lE_{m\in \bM}\big|\E_{n\in[N]}f_1(n+m)-\alpha|^2=0,
$$
where one sees that the limits  $\lE_{m\in \bM}$ exist by expanding the square and using our assumption that $f_1$ admits log-correlations on $\bM$.
   Expanding the square and using Proposition~\ref{P:correspondence} we get that
$$
\lim_{N\to\infty} \int\big|\E_{n\in[N]}F_1(T^nx)-\alpha|^2\, d\mu=0.
$$
Using the mean ergodic theorem we deduce that
$$
\int |\E(F_1|\mathcal{I})-\alpha|^2\, d\mu=0.
$$
Hence, $\E(F_1|\mathcal{I})=\alpha=\int F_1\, d\mu$.
\end{proof}

\subsection{Reduction to ergodic statements}
In this subsection we  show that Theorems~\ref{T:indaper''} and \ref{T:indequi}
follow from two ergodic statements that we prove subsequently.
In order to carry out the needed reduction we will make crucial use of certain  identities satisfied by F-systems of multiplicative functions. They are based on   work in \cite{T15}  and \cite{TT18} and are proved in \cite[Theorem~3.8]{FH19}. Henceforth, for $d\in \N$ we let $\P_d:=\P\cap(d\Z+1)$.
\begin{theorem}\label{T:Tao2}
	Let $\ell\in \N$ and  $f_1,\ldots, f_\ell\colon \Z\to \mathbb{U}$  be multiplicative functions. There exists $d\in \N$ such that the following holds: If
	$(X,\mu,T)$ is an F-system  of $f_1,\ldots, f_\ell$ and if    $F_1, \ldots, F_\ell$ are as in Proposition~\ref{P:correspondence}, then we have
	\begin{equation}
	\label{eq:Furstenberg-Tao}
	\int \prod_{j=1}^{\ell} T^{n_j} F_j \, d\mu=   \E_{p\in \P_d}  \int \prod_{j=1}^{\ell} T^{pn_j}F_j\, d\mu
	\end{equation}
	for all  $n_1,\ldots, n_{\ell}\in \Z$.
\end{theorem}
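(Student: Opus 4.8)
The plan is to treat this as the measure-theoretic packaging of Tao's entropy decrement identity and to reduce it, via Proposition~\ref{P:correspondence}, to a purely number-theoretic statement. First I would unwind both sides: if $(X,\mu,T)$ is the F-system of $f_1,\ldots,f_\ell$ on a sequence of intervals $\bM$ and $F_1,\ldots,F_\ell$ are the functions of Proposition~\ref{P:correspondence}, then $\int\prod_{j=1}^\ell T^{n_j}F_j\,d\mu=\lE_{m\in\bM}\prod_{j=1}^\ell f_j(m+n_j)$ and likewise $\int\prod_{j=1}^\ell T^{pn_j}F_j\,d\mu=\lE_{m\in\bM}\prod_{j=1}^\ell f_j(m+pn_j)$ for every prime $p$. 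Hence \eqref{eq:Furstenberg-Tao} is equivalent to the identity
\begin{equation*}
\lE_{m\in\bM}\prod_{j=1}^\ell f_j(m+n_j)=\E_{p\in\P_d}\,\lE_{m\in\bM}\prod_{j=1}^\ell f_j(m+pn_j),
\end{equation*}
valid for every $\bM$ on which the relevant limits exist (that the average over $p\in\P_d$ also exists being part of the assertion), for a suitable $d=d(f_1,\ldots,f_\ell)\in\N$. This identity is exactly \cite[Theorem~3.1]{FH19}, which abstracts the entropy decrement argument of Tao \cite{T15} in the two-point case and its extension by Tao and Ter\"av\"ainen \cite{TT18} to higher order and to multiplicative functions that need not be strongly aperiodic; so at this point one may simply invoke it.

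For a self-contained plan I would run the entropy decrement itself. The heuristic it makes rigorous is that dilating all shifts by a prime does not change the correlation on average: for $m$ drawn logarithmically at random from $[M_k]$ and $p$ uniform in a dyadic block, the joint distribution of the tuple $(f_j(m+n_j))_{1\le j\le\ell}$ is, in the limit, essentially independent of the residue $m\bmod p$ on average over $p$. One proves this by bounding the Shannon entropy of the distribution of $\bigl(m\bmod\prod_{q\le Q}q,\ (f_j(m+n_j))_j\bigr)$ and observing that, as $Q$ runs through a geometric sequence, this entropy cannot increase by more than a bounded amount per step, which forces the mutual information between $(f_j(m+n_j))_j$ and $m\bmod p$ to be $o(1)$ on average over $p$ in each block; the Tur\'an--Kubilius inequality and the multiplicativity of the $f_j$ are what keep the increments under control. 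Granting this, one restricts $m$ to multiples of $p$ (allowed by the near-independence), uses $f_j(pm+pn_j)=f_j(p)f_j(m+n_j)$ on the density $\ge 1-\ell/p$ set of $m$ with $p\nmid m+n_j$ for all $j$, and averages over $p$; the small-prime and the possible periodic (Dirichlet character) contributions of the $f_j$ are absorbed by passing to the progression $p\equiv 1\pmod d$, which is the refinement from \cite{TT18} that removes the need for an aperiodicity hypothesis. Sending the scales $Q$, then the dyadic range, then $k\to\infty$ turns all the $o(1)$'s into genuine equalities and yields the displayed identity.

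The main obstacle is the entropy decrement step: it is the one genuinely deep ingredient, imported essentially verbatim from \cite{T15,TT18}. Everything else---the correspondence-principle translation at the start, the elementary factorization off a sparse set, and the bookkeeping needed to pass to $\P_d$---is routine; the only point requiring a little care is to check that all the logarithmic limits along $\bM$, including the outer average over $p\in\P_d$, continue to exist at each stage, which follows by expanding the products and using the hypothesis that $f_1,\ldots,f_\ell$ admit log-correlations on $\bM$.
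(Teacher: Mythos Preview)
The paper does not prove Theorem~\ref{T:Tao2}; it simply cites it as \cite[Theorem~3.8]{FH19}, noting that it is based on the entropy decrement work of \cite{T15} and \cite{TT18}. Your proposal is therefore correct and in fact more detailed than the paper itself: you first translate the statement via Proposition~\ref{P:correspondence} to the number-theoretic identity and then invoke \cite{FH19}, which is exactly the right move, and your optional sketch of the entropy decrement mechanism is a reasonable high-level summary of what happens in \cite{T15,TT18,FH19}. One small point: you cite \cite[Theorem~3.1]{FH19} for the number-theoretic identity, whereas the paper cites \cite[Theorem~3.8]{FH19} for the ergodic formulation; just make sure the theorem number you quote actually matches the statement you need (the correspondence-principle translation is trivial either way, so this is purely a bibliographic check).
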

We will show that Theorem~\ref{T:indaper''} follows from the following result:
\begin{proposition}\label{P:ET2}
	Let  $\ell\in \N$ and 	 $S$ be a subset of $\N^\ell$ with independent elements.
	Suppose that  $(X,\mu,T)$ is a system and  $F_0,\ldots, F_\ell\in L^\infty(\mu)$ are functions at least one of which is orthogonal to the rational Kronecker factor of the system. 	We set  $n_0:=0$ and denote the coordinates of $n\in \N^\ell$ by $n_1,\ldots, n_\ell$.
	Then for every $d\in \N$ all limits $\E_{p\in\P_d}$ below exist and  we have
	\begin{equation}\label{E:l}
	\lim_{|n|\to\infty, n \in S} \Big(\E_{p\in\P_d} \int  \prod_{j=0}^\ell T^{pn_j}F_j\, d\mu\Big)=0.
	\end{equation}
\end{proposition}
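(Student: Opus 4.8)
\emph{The plan} is to pass, via the theory of characteristic factors, to the case of a nilsystem, to compute the average over $\P_d$ there by means of the equidistribution of primes in nilmanifolds, and finally to kill the surviving Fourier modes using the independence of $S$. Fix distinct $n_0=0,n_1,\dots,n_\ell$. By the ergodic theorems for multiple averages along the primes (Frantzikinakis--Host--Kra, Wooley--Ziegler) the averages $\E_{p\in[N]\cap\P_d}\prod_{j=1}^\ell T^{pn_j}F_j$ converge in $L^2(\mu)$, so all the limits $\E_{p\in\P_d}$ in the statement exist, and a Host--Kra factor $\CZ$, of order depending only on $\ell$, is characteristic for them, uniformly in the $n_j$; hence the value of $\E_{p\in\P_d}\int\prod_{j=0}^\ell T^{pn_j}F_j\,d\mu$ is unchanged if each $F_j$ is replaced by $\E(F_j\mid\CZ)$. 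The rational Kronecker factor sits inside $\CZ$, so the assumption ``$F_{j_0}\perp$ rational Kronecker factor'' is preserved; applying $T^{-pn_{j_0}}$ inside the integral and relabelling, the new shift indexed by $0$ is $n_{j_0}-n_{j_0}=0$ and carries $F_{j_0}$, while the new shift set $\{(n_j-n_{j_0})_{j\neq j_0}\colon n\in S\}$ still has independent elements (a nontrivial relation among the new coordinates is one among $n_1,\dots,n_\ell$) and has norm tending to $\infty$ whenever $|n|\to\infty$ (the coordinate $-n_{j_0}$ together with the differences ``sees'' all of $n$); so we may assume $j_0=0$. Finally, writing $\CZ$ as an inverse limit of nilsystems and using that the multilinear average is $L^2$-continuous in each coordinate uniformly in $n$, it suffices to prove the statement for a nilsystem $(X,\mu,T)=(G/\Gamma,m_{G/\Gamma},x\mapsto g\cdot x)$ with continuous $F_j$ and $F_0$ orthogonal to the rational Kronecker factor; by the standard preliminary reduction we may also assume the structure constants of $g$ (its parts above the Kronecker level) act irrationally and independently.

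\emph{Equidistribution and resonances.} For fixed $n$ the sequence $t\mapsto\int\prod_{j=0}^\ell T^{tn_j}F_j\,d\mu$ is a nilsequence, so by Green--Tao's equidistribution theorem for polynomial orbits of primes in nilmanifolds (in the form valid for primes in the residue class $1\bmod d$), $\E_{p\in\P_d}\int\prod_{j=0}^\ell T^{pn_j}F_j\,d\mu$ is the integral of $F_0\otimes\dots\otimes F_\ell$ against a homogeneous measure on $(G/\Gamma)^{\ell+1}$. Unwinding the (iterated) vertical Fourier decomposition of the $F_j$ and carrying out a Weyl-type analysis of the polynomial-in-$p$ phases generated by the shifts $pn_j$, this integral is a \emph{finite} sum of ``resonant'' terms; each term is indexed by a choice of Fourier data for the $F_j$, to which one attaches combined frequencies $v_0,\dots,v_\ell\in\R/\Z$ built from those data and from the structure constants of $g$, and the term vanishes unless the fibre cancellations $\sum_{j=0}^\ell v_j=0$ hold and, crucially, the linear coefficient in $p$ of the total phase is rational, i.e. $\big(\sum_{j=1}^\ell n_j v_j\big)\in\Q$ (this last being the defect that makes averaging $e^{2\pi i p\theta}$ over $p\in\P_d$ vanish for irrational $\theta$). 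Moreover, since $F_0$ is orthogonal to the rational Kronecker factor, in any non-vanishing term the component of $F_0$ is non-constant, so $v_0$ picks up a genuinely irrational contribution from $g$'s (Kronecker-level or higher) structure constants; with the irrationality/independence arranged above this gives $v_0\notin\Q$.

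\emph{Using independence of $S$.} Consider a term with non-vanishing coefficient. The set $K:=\{n\in\Z^\ell\colon \sum_{j=1}^\ell n_j v_j\in\Q\}$ is a subgroup of $\Z^\ell$ (the preimage of $\Q/\Z$ under a homomorphism). Since $\sum_{j=1}^\ell v_j=-v_0\notin\Q$, we have $(1,\dots,1)\notin K$; and if $K$ had finite index $N$ then $N v_j\in\Q$, hence $v_j\in\Q$, for every $j\in\{1,\dots,\ell\}$, forcing $v_0=-\sum_{j\geq1}v_j\in\Q$, a contradiction. Therefore $K$ has rank at most $\ell-1$, so $K\subseteq\{n\in\Z^\ell\colon k\cdot n=0\}$ for some non-zero $k\in\Z^\ell$. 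As $S$ has independent elements, $S\cap K$ is finite, so this term contributes for only finitely many $n\in S$. Since there are finitely many terms in all, the sum, and with it $\E_{p\in\P_d}\int\prod_{j=0}^\ell T^{pn_j}F_j\,d\mu$, tends to $0$ as $|n|\to\infty$ with $n\in S$, which is what we wanted.

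\emph{Where the difficulty lies.} The hard part is the middle step: legitimately turning the average over $\P_d$ into the clean finite sum of resonant terms. This draws on the characteristic-factor theory along primes to descend to a nilsystem, on the Green--Tao equidistribution of prime nilorbits (with the $1\bmod d$ twist), and on a careful bookkeeping of the higher-step vertical-frequency contributions and of the rational-phase phenomena that separate primes in a progression from the integers -- precisely the phenomena that orthogonality of $F_0$ to the rational Kronecker factor is there to absorb, via the assertion $v_0\notin\Q$. Once that structural reduction is in place, the combinatorial core -- that independence of $S$ eliminates along $S$ every mode that survives the prime average -- is short, as is the reduction to nilsystems.
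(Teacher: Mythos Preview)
Your outline shares with the paper the opening reduction to ergodic nilsystems via characteristic factors and the Host--Kra structure theorem, and the final combinatorial step (independence of $S$ forces each fixed linear constraint to hold only finitely often). Where the two diverge is exactly your ``middle step,'' and there the proposal has a genuine gap.

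You assert that after an iterated vertical Fourier expansion the prime average becomes a finite sum of terms indexed by frequencies $v_0,\ldots,v_\ell\in\R/\Z$, with the survival condition $\sum_{j\geq 1} n_j v_j\in\Q$. This picture is correct only at the \emph{Abelian} level: there the $v_j$ are built from the Kronecker frequencies of $T$ and the characters of the finite part, and the $\Q$-condition is precisely what makes $\E_{p\in\P_d}e(p\theta)$ nonzero. For an $s$-step nilsystem with $s\geq 2$ the top-level vertical characters are \emph{integer} vectors $k_j\in\Z^t$ (characters of the torus $G_s/(G_s\cap\Gamma)$), and the obstruction to vanishing is a system of integer equations $\sum_j n_j k_{i,j}=0$, not a single rationality condition on a real phase. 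Your ``$v_0\notin\Q$ because $F_0$ is orthogonal to the rational Kronecker'' is likewise an Abelian statement with no analogue at the higher steps; the hypothesis on $F_0$ contributes nothing there. So as written the resonance analysis does not go through beyond $s=1$, and the acknowledgment that ``careful bookkeeping'' is needed is where the actual content is missing.

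The paper organizes this missing content as follows. One inducts on the nilpotency step $s$. At step $s\geq 2$ one decomposes each $F_j$ into $s$-step nilcharacters $\Phi_j$. If all $\Phi_j$ are trivial they factor through $G/(G_s\Gamma)$ and the inductive hypothesis applies; if $\Phi_0$ alone is nontrivial the integral is identically zero; otherwise some $\Phi_j$ with $j\geq 1$ is nontrivial. In that last case one first trades the prime average for an average over integers in a fixed residue class (Proposition~\ref{P:ergid2}, which packages the Green--Tao input), and then invokes the Ziegler/Bergelson--Host--Kra description of the closure of diagonal orbits (Lemma~\ref{L:keyinv}) to show the resulting pointwise limit is an integral over a sub-nilmanifold invariant under $(u^{n_1},\ldots,u^{n_\ell})$ for every $u\in G_s$. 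That invariance, together with the nilcharacter transformation law, forces vanishing unless $\sum_j n_j k_{i,j}=0$ for all $i$ (Lemma~\ref{L:orthogonality}), which is a nontrivial integer hyperplane and therefore meets $S$ finitely often. The orthogonality hypothesis on one $F_j$ is used only at the base case $s=1$, so your preliminary shift reducing to $j_0=0$ is unnecessary.

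In short: your Abelian analysis and your subgroup argument for $K$ are fine (and match the paper's Lemma~\ref{L:Abelian}), but the higher-step case needs the induction on $s$ together with Proposition~\ref{P:ergid2} and the orbit-closure Lemma~\ref{L:orthogonality}; these are the ingredients your sketch gestures at but does not supply.
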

\begin{remark}
A function is orthogonal to the rational Kronecker factor of a system $(X,\mu,T)$ if it is orthogonal to  any function $F\in L^\infty(\mu)$  that satisfies $TF=e^{2\pi i \alpha} \, F$ for some  $\alpha \in \mathbb{Q}$.
\end{remark}


We will show that Theorem~\ref{T:indequi} follows from the following result:

\begin{proposition}\label{P:ET1}
	Let  $\ell, r\in\N$. Let also  $a_0:=0$ and suppose that $a_1,\ldots,a_\ell \colon \N^r\to \N$
	are weakly independent  sequences that are  jointly equidistributed in congruence classes. Then for every $d\in \N$, for all ergodic systems $(X,\mu,T)$ and functions  $F_0,\ldots, F_\ell\in L^\infty(\mu)$,
 all the  limits  below exist and we have
	\begin{equation}\label{E:ET1}
		\E_{n\in\N^r}\E_{p\in\P_d} \int \prod_{j=0}^\ell T^{pa_j(n)}F_j\, d\mu=\prod_{j=0}^\ell \int F_j\, d\mu.
	\end{equation}
\end{proposition}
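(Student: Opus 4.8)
The plan is to reduce the joint ergodic average over $n\in\N^r$ and $p\in\P_d$ to a product of individual averages by exploiting the weak independence and joint equidistribution in congruence classes of the sequences $a_1,\ldots,a_\ell$, together with the ergodicity of the system. First I would recall (or invoke a qualitative equidistribution result on nilmanifolds, in the spirit of the Host--Kra--Ziegler structure theory) that for an ergodic system it suffices to prove \eqref{E:ET1} for $F_0,\ldots,F_\ell$ lying in a characteristic factor for the averages $\E_{n\in\N^r}\prod_j T^{b_j(n)}F_j$; and that, since the relevant polynomial expressions here are linear in $n$ (the $a_j$ being only required to be weakly independent and equidistributed in congruence classes, not polynomial), the characteristic factor can be taken to be the Kronecker factor. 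Thus I would reduce to the case where $(X,\mu,T)$ is an ergodic rotation on a compact abelian group $Z$, so that each $F_j$ is (approximated by) a finite linear combination of characters $\chi$ of $Z$, and $T$ acts by translation by a fixed $\alpha\in Z$ with $\{n\alpha\}$ equidistributed.

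The core computation is then the following. Write $F_j=\sum_{\chi}\widehat{F_j}(\chi)\chi$. Expanding the integral $\int\prod_{j=0}^\ell T^{pa_j(n)}F_j\,d\mu$ over $Z$, only the diagonal terms survive: we get a sum over tuples $(\chi_0,\ldots,\chi_\ell)$ with $\prod_j\chi_j=1$ of $\prod_j\widehat{F_j}(\chi_j)$ times the character $\chi_0(pa_0(n)\alpha)\cdots\chi_\ell(pa_\ell(n)\alpha)$ — and since $a_0=0$, the $\chi_0$ factor contributes nothing to the oscillation. So I must understand, for each fixed nontrivial constraint $\prod_{j=1}^\ell\chi_j=\chi_0^{-1}$, the double average
\[
\E_{n\in\N^r}\E_{p\in\P_d}\ \prod_{j=1}^\ell \chi_j\big(p\,a_j(n)\,\alpha\big).
\]
Because $Z$ is a compact abelian group and each $\chi_j$ has finite order, say dividing $u$, the quantity $\prod_{j=1}^\ell\chi_j(pa_j(n)\alpha)$ depends only on $\sum_j k_j a_j(n)\bmod u$ for suitable integers $k_j$ (depending on $\alpha$ and the $\chi_j$), where $p$ enters through $p\bmod u$; but $p\in\P_d$ with $d$ a multiple of $u$ forces $p\equiv1\bmod u$, so the $p$-dependence disappears and the average collapses to $\E_{n\in\N^r}\,e^{2\pi i\,\ell(n)/u}$ for a linear expression $\ell(n)=\sum_j k_j a_j(n)$. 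By the \emph{joint equidistribution in congruence classes} hypothesis, this average is $0$ unless all $k_j$ are multiples of $u$, i.e. unless each $\chi_j$ is trivial; and by \emph{weak independence} the exceptional set of $n$ where the linear form vanishes identically has density zero, so it does not affect the average either. (I would need to handle carefully the case where $d$ is \emph{not} a multiple of $u$ by replacing $d$ with $\mathrm{lcm}(d,u)$ inside $\P_d$ and noting $\P_{\mathrm{lcm}(d,u)}$ still has a well-defined relative density in $\P_d$; this is the one genuinely fiddly point.) Hence the only surviving contribution is the all-trivial tuple $\chi_0=\cdots=\chi_\ell=1$, which gives exactly $\prod_{j=0}^\ell\int F_j\,d\mu$, and since $a_0=0$ forces $\int F_0\,d\mu$ to appear with $\chi_0=1$ freely, this matches the right-hand side $\prod_{j=0}^\ell\int F_j\,d\mu$ as claimed.

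The existence of all the limits $\E_{n\in\N^r}$ and $\E_{p\in\P_d}$ should be extracted along the way: after reduction to the Kronecker factor everything is a finite sum of genuinely convergent averages of exponentials (convergence of $\E_{p\in\P_d}e^{2\pi i kp/u}$ following from the prime number theorem in arithmetic progressions, and convergence of $\E_{n\in\N^r}$ from the equidistribution-in-congruence-classes hypothesis), and a standard approximation argument in $L^2(\mu)$ transfers convergence and the identity from trigonometric polynomials to general $F_j$. The main obstacle I anticipate is the reduction to the Kronecker factor: justifying that the Kronecker factor is characteristic for these averages in the generality of \emph{arbitrary} weakly independent sequences equidistributed in congruence classes — rather than, say, polynomial or linear sequences — requires the van der Corput / PET-type machinery together with the qualitative nilmanifold equidistribution results alluded to in the proof-strategy section, and getting the bookkeeping right there (especially interleaving the $n$-average with the prime average over $p$) is where the real work lies; by contrast, the character computation above, once one is on the Kronecker factor, is essentially routine.
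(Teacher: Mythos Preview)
There are two genuine gaps.

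First, and most seriously, the reduction to the Kronecker factor cannot be carried out as you suggest. For a fixed $n$ with the $a_j(n)$ distinct, the inner average $\E_{p\in\P_d}\int\prod_j T^{pa_j(n)}F_j\,d\mu$ is a multiple ergodic average along primes whose iterates $p\mapsto pa_j(n)$ are distinct integer multiples of $p$; the characteristic factor for such averages is $Z_\ell$, not the Kronecker factor $Z_1$ (Theorem~\ref{T:FHK}). There is no characteristic-factor theory for the outer average $\E_{n\in\N^r}$ along \emph{arbitrary} weakly independent sequences, so the remark that the expressions are ``linear in $n$'' buys nothing, and a van der Corput/PET argument in $n$ does not lower the factor either: differencing in $n$ still leaves iterates linked by the common factor $p$. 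The paper's route is to reduce via $Z_\ell$ to ergodic nilsystems (Proposition~\ref{P:red2}) and then argue by induction on the nilpotency step $s$: for $s\geq 2$ one decomposes into $s$-step nilcharacters and uses an equidistribution lemma (Lemma~\ref{L:orthogonality}, resting on Lemma~\ref{L:keyinv}) together with Proposition~\ref{P:ergid2} to show that non-trivial $s$-step nilcharacter contributions vanish after the double average, reducing to step $s-1$. That only Kronecker-level data ultimately survives is the \emph{conclusion} of this argument, not an input you may assume.

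Second, even on the Kronecker factor your computation is wrong. You assert that each character $\chi_j$ has finite order, but the Kronecker factor of an ergodic system has the form $\Z_u\times\T^v$, and characters of $\T^v$ have infinite order. Your mod-$u$ reduction and appeal to joint equidistribution in congruence classes correctly handles only the case where all torus frequencies vanish. The paper's treatment of the abelian case (Lemma~\ref{L:Abelian}) handles the remaining case by a different mechanism: if some torus frequency $l_j\neq 0$, then weak independence of the $a_j$ together with rational independence of the coordinates of the rotation $\alpha$ forces $\beta_n:=\sum_j a_j(n)(l_j\cdot\alpha)+\sum_j k_ja_j(n)/u$ to be irrational for a density-$1$ set of $n$, whence $(p\beta_n)_{p\in\P_d}$ equidistributes on $\T$ and the inner $p$-average already vanishes. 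This irrational-frequency case is entirely missing from your sketch.
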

\begin{remark}
	Examples of periodic systems show that \eqref{E:ET1} fails if we replace $\E_{n\in\N^r}$ with $\Dlim_{|n|\to\infty}$.
\end{remark}

\begin{proof}[Proof of Theorems~\ref{T:indaper''} and \ref{T:indequi} assuming Propositions~\ref{P:ET2} and \ref{P:ET1}]
First we prove Theorem~\ref{T:indaper''} assuming Proposition~\ref{P:ET2}. Let $(X,\mu,T)$ be the  $F$-system of  $f_0,\ldots, f_\ell$ on $\bM$.  By Theorem~\ref{T:Tao2} there exist $d\in \N$ and  functions $F_0,\ldots, F_\ell\in L^\infty(\mu)$ such that  for all $n_0,\ldots, n_\ell\in\Z$ we have
\begin{equation}\label{E:pd}
\lE_{m\in\bM}\prod_{j=0}^\ell f_j(m+n_j)=\E_{p\in \P_d}  \int \prod_{j=0}^{\ell} T^{pn_j}F_j\, d\mu.
\end{equation}

Suppose that $f_{j_0}$ is strongly aperiodic for some $j_0\in \{0,\ldots, \ell\}$. Then by Part~\eqref{i} of Proposition~\ref{P:nt} the function $F_{j_0}$ is orthogonal to the  Kronecker factor of the system $(X,\mu,T)$. Using this and identity \eqref{E:pd},
we deduce from Proposition~\ref{P:ET2} that Theorem~\ref{T:indaper''}  holds.

Next we prove Theorem~\ref{T:indequi}  assuming Proposition~\ref{P:ET1}.
Let $\mu=\int \mu_x\, d\mu$ be the ergodic decomposition of the measure $\mu$.
 We have that
$$
 \E_{n\in\N^r}\E_{p\in\P_d} \int \prod_{j=0}^\ell T^{pa_j(n)}F_j\, d\mu=
\int\Big( \E_{n\in\N^r}\E_{p\in\P_d} \int \prod_{j=0}^\ell T^{pa_j(n)}F_j\, d\mu_x\Big)\, d\mu,
 $$
 where   we used the bounded convergence theorem twice, the first time we used that the limits $\E_{p\in\P_d} \int \prod_{j=0}^\ell T^{pa_j(n)}F_j\, d\mu_x$ exist for every $n\in \N$ (see Theorem~\ref{T:FHK} below) and the second time we used that the limits $\E_{n\in\N^r}\big(\E_{p\in\P_d} \int \prod_{j=0}^\ell T^{pa_j(n)}F_j\, d\mu_x\big)$  exist by Proposition~\ref{P:ET1}. Using Proposition~\ref{P:ET1} once more, we get that
 $$
\E_{n\in\N^r} \E_{p\in\P_d} \int \prod_{j=0}^\ell T^{pa_j(n)}F_j\, d\mu=   \int \Big(\prod_{j=0}^\ell  \int F_j\, d\mu_x\Big)\, d\mu= \int \prod_{j=0}^\ell \E(F_j|\mathcal{I})\, d\mu,
 $$
 where we used that for every $F\in L^\infty(\mu)$ for $\mu$ almost every $x\in X$ we have that $\E(F|\mathcal{I})(x)=\int F\, d\mu_x$ (see for example \cite[Page~37]{HK18}).

By Part \eqref{ii} of Proposition~\ref{P:nt}
we have that $\E(F_j|\mathcal{I})=\int F_j\, d\mu$ for $j=0,\ldots,\ell$
(here we made crucial use of the fact that the multiplicative functions are real valued). Combining the above with identity \eqref{E:pd}  and the fact that (by \eqref{E:correspondence})
$\int F_j\, d\mu=\lE_{m\in\bM}\, f_j(m)$ for $j=1,\ldots,\ell$,
we get that Theorem~\ref{T:indequi} holds.
\end{proof}

Thus, it remains to prove the ergodic statements of  Propositions~\ref{P:ET2} and \ref{P:ET1}. We do this in the remaining  subsections.

\subsection{Nilsystems, nilcharacters, and nilfactors}\label{SS:nil}
If $G$ is a group  we let
$G_1:=G$ and $G_{j+1}:=[G,G_j]$, $j\in \mathbb{N}$. We say that $G$ is {\em $s$-step nilpotent} if  $G_{s+1}$ is the trivial
group.
An {\em $s$-step nilmanifold} is a homogeneous space $X=G/\Gamma$, where  $G$ is an $s$-step nilpotent Lie group
and $\Gamma$ is a discrete cocompact subgroup of $G$. With $e_X$ we denote  the image in $X$ of the unit element of $G$.
An {\it $s$-step  nilsystem} is
a system of the form $(X, \CX, m_X, T_b)$,  where $X=G/\Gamma$ is an $s$-step nilmanifold, $b\in G$,  $T_b\colon X\to X$ is defined by  $T_b(g\cdot e_X) \mathrel{\mathop:}= (bg)\cdot e_X$ for  $g\in G$,  $m_X$  is
the normalized Haar measure on $X$, and  $\CX$  is the completion
of the Borel $\sigma$-algebra of $G/\Gamma$.
We call the map $T_b$ or the element $b$ a {\em nilrotation}. If $T_b$ acts ergodically we call $b$ an {\em ergodic nilrotation}.

With  $G^0$ we denote the connected component of the identity element in $G$. When we are working  with an ergodic nilsystem, we can assume that the space $X$ is represented as  $X=G/\Gamma$ where
$G=\langle G^0,b \rangle$ and $\Gamma$ does not contains  any non-trivial normal subgroups of $G$
(see pages 100 and 177 in \cite{HK18} or \cite[Section 4.1]{BHK05}). Henceforth, we are going to use these properties without further reference.  When we work with  such a representation we have that  for $j\geq 2$   the  commutator subgroups $G_j$ are connected  (see \cite[Page~155]{HK18} or \cite[Theorem~4.1]{BHK05}).
Hence, in an $s$-step nilmanifold with  $s\geq 2$, the subgroup $G_s$ is connected and the Abelian group $K_s:=G_s/(G_s\cap\Gamma)$ is a finite dimensional torus (perhaps the trivial one). Let $\wh{K_s}$ be the dual group of $K_s$; it consists of the characters of $G_s$ that  are $(\Gamma\cap G_s)$-invariant.
An \emph{$s$-step nilcharacter} of $X$ (often called a {\em vertical nilcharacter}) with \emph{frequency} $\chi$,
where   $\chi\in\wh{K_s}$,
is a
function $\Phi\in \C(X)$   that  satisfies
\begin{equation}\label{E:non}
\Phi(u\cdot x)=\chi(u)\, \Phi(x), \ \text{ for every } \ u\in G_s \ \text{  and }  \ x\in X.
\end{equation}
If $\chi$ is a non-trivial character of $K_s$,  we say that  $\Phi$
is a \emph{non-trivial $s$-step nilcharacter},  otherwise we say that it is  a \emph{trivial $s$-step nilcharacter}. It follows from \eqref{E:non} that every non-trivial $s$-step nilcharacter has zero integral. It is also known that
the linear span of $s$-step nilcharacters  is dense in $C(X)$ with the uniform norm
(see for example \cite[Proof of Lemma~2.7]{GT12}).

If the nilmanifold $X$ is not connected, let  $X^0$ be the connected component of $e_X$ in $X$. Then for $s\geq 2$ the restriction of a non-trivial $s$-step nilcharacter $\Phi$ of $X$ onto $X^0$ is a non-trivial $s$-step nilcharacter of $X^0$ with  the same frequency (see \cite[Section~3.3]{F17}).

Let $(X,\mu,T)$ be an ergodic system and for  $k\in \N$ let
$(Z_k, \CZ_k, \mu_k,T)$ be the factor of order $k$ of $X$ as defined in~\cite[Chapter~9]{HK18} (we abuse notation and denote the transformation on $Z_k$ by $T$).
The following  result was  proved in \cite{HK05}:
\begin{theorem}\label{T:HK}
	If $(X,\mu,T)$ is an ergodic system, then for every $k\in \N$ the system $(Z_k,\CZ_k, \mu_k,T)$ is an inverse limit of ergodic $k$-step nilsystems.
\end{theorem}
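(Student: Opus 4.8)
Since this is the Host--Kra structure theorem, the plan is not to give a new argument but to recall the skeleton of the proof from \cite{HK05}. The starting point is the hierarchy of \emph{cube measures}: set $\mu^{[0]}:=\mu$ on $X^{[0]}:=X$ and inductively let $\mu^{[k+1]}$ be the relatively independent self-joining of two copies of $\mu^{[k]}$ over the $\sigma$-algebra $\CI^{[k]}$ of sets invariant under the diagonal transformation $T^{[k]}:=T\times\cdots\times T$ on $X^{[k]}:=X^{2^k}$. One then defines the seminorms
$$
\nnorm{F}_{k}^{2^{k}}:=\int_{X^{[k]}}\ \prod_{\epsilon\in\{0,1\}^{k}}\CC^{|\epsilon|}F\ d\mu^{[k]},
$$
where $\CC$ denotes complex conjugation and $|\epsilon|:=\epsilon_1+\dots+\epsilon_k$; a Cauchy--Schwarz--Gowers induction shows these are genuine seminorms on $L^\infty(\mu)$, nondecreasing in $k$. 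The first soft step is that for each $k$ there is a factor $(Z_k,\CZ_k,\mu_k,T)$ of $X$ characterized by $\E(F\mid\CZ_k)=0\iff \nnorm{F}_{k+1}=0$; granting this, the entire content of the theorem is the identification of the \emph{structure} of $Z_k$.

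The second step is to show that $Z_k$ is a \emph{system of order $k$}: a tower
$$
Z_0\leftarrow Z_1\leftarrow\cdots\leftarrow Z_k
$$
with $Z_0$ a point, $Z_1$ the Kronecker factor (an ergodic rotation on a compact abelian group), and each step $Z_j\to Z_{j-1}$ an isometric extension, i.e.\ an extension by a compact abelian group $U_j$ built from a cocycle $\rho_j\colon Z_{j-1}\to U_j$ that satisfies the order-$j$ \emph{Conze--Lesigne / Host--Kra equation}: for almost every increment $h$ in the relevant cube space, the derivative $\Delta_h\rho_j$ equals a constant times a coboundary. This is extracted from the defining seminorm property together with Host--Kra's analysis of ``functions of type $j$'': any function measurable with respect to $\CZ_j$ but orthogonal to $\CZ_{j-1}$ forces its associated cocycle to have type $\le j$, which is precisely that equation.

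The hard part is the converse: every ergodic system of order $k$ is an inverse limit of ergodic $k$-step nilsystems. Here the plan is induction on $k$, the case $k=1$ being the classical fact that an ergodic rotation on a compact abelian group is a pro-(one-step nilsystem). Assuming $Z_{k-1}$ is already realized as an inverse limit of $(k-1)$-step nilsystems $W=G/\Gamma$, it suffices to analyze an isometric extension $X=W\times_\rho U$ with $U$ a compact abelian Lie group and $\rho$ obeying the order-$k$ cocycle equation. From that equation one produces, for each increment $h$, a transfer function whose data assemble into a group $\widetilde G$ of transformations of a cover of $X$ that contains the original $\Z$-action and acts transitively; one then shows $\widetilde G$ is a $k$-step nilpotent Lie group and that the stabilizer of a point is a discrete cocompact subgroup, so the cover is a $k$-step nilmanifold and $X$ (hence, after passing up the inverse limit, $Z_k$) is an inverse limit of $k$-step nilsystems.

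The main obstacle is exactly this realization step: converting abstract cocycle data into a genuine nilpotent \emph{Lie} group with a cocompact lattice. One must equip $\widetilde G$ with a Lie-group topology, verify that it is $k$-step nilpotent (and not merely ``approximately'' so), prove cocompactness of the stabilizer, and handle the inverse-limit bookkeeping --- the individual factors need only be \emph{pro}-nil, and the non-connected/rational part of $Z_1$ has to be carried along so that the nilmanifolds produced are allowed to be disconnected. All of this is carried out in \cite{HK05} (see also the exposition in \cite[Chapters~9--11]{HK18}), so in the present paper the statement is simply invoked.
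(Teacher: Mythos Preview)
Your proposal is correct and matches the paper's treatment: the paper does not prove this theorem at all but simply records it as ``proved in \cite{HK05}'' and invokes it as a black box. Your sketch of the Host--Kra argument goes well beyond what the paper provides, but your closing sentence (``in the present paper the statement is simply invoked'') is exactly right.
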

We remark that  properties of inverse limits imply that if $(X,\mu,T)$ is an ergodic system, then for every $F\in L^\infty(Z_k,\mu_k)$  and  every $\varepsilon>0$, there exist a  $k$-step nilsystem $(X',m_{X'},T')$, a factor map $\pi\colon Z_k\to X'$,  and a function $F'\in L^\infty(m_{X'})$ (in fact, we can take  $F':=\E(F|X')$), such that  $\norm{F-F'\circ\pi}_{L^1(\mu)}\leq \varepsilon$.
Furthermore, the function  $F'$ can be chosen so that if $F$ is orthogonal to the (rational) Kronecker factor of the system $(Z_k, \CZ_k, \mu_k,T)$, then
$F'$ is orthogonal to the (rational) Kronecker factor of the system $(X',m_{X'},T')$.

\subsection{Reduction to statements about nilsystems}
To carry out our reductions further we  will use
the following convergence result:
\begin{theorem}
	\label{T:FHK}
	Let $(X,\mu,T)$ be a system and
	  $d, \ell \in \N$. Then
	  for every  $F_1,\ldots ,F_\ell\in L^\infty(\mu)$,  the following limit exists in $L^2(\mu)$
	$$
	\E_{p\in\P_d} \prod_{j=1}^\ell T^{pj}F_j\, d\mu.
	$$
	Furthermore, the factor  $Z_\ell$ (defined in Section~\ref{SS:nil}) is characteristic for mean convergence of these averages, meaning, if $\E(F_j|Z_\ell)=0$ for some $j\in \{1,\ldots, \ell\}$, then the  averages converge to $0$ in $L^2(\mu)$.
\end{theorem}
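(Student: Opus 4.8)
The plan is to reduce the average over primes to a von Mangoldt--weighted average over the integers, and then to combine the Host--Kra structure theory with the Green--Tao--Ziegler bounds on the Gowers uniformity norms of the primes. For a fixed $x\in X$, the prime number theorem for the residue class $1\bmod d$, the negligible contribution of prime powers, and partial summation to remove the weight $\log p$ together show that $\E_{p\in\P_d}\prod_{j=1}^\ell F_j(T^{pj}x)$ exists if and only if $\lim_{N\to\infty}\frac1N\sum_{n\le N}\Lambda_d(n)\prod_{j=1}^\ell F_j(T^{jn}x)$ does, and in that case the two coincide; here $\Lambda_d(n):=\frac{\varphi(d)}{d}\,\Lambda(n)\,\one_{d\mid n-1}$ is the von Mangoldt function restricted to the class $1\bmod d$ and normalised to have asymptotic mean value $1$. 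Since the quantities involved are bounded by $\prod_j\norm{F_j}_\infty$, this passes to convergence in $L^2(\mu)$ by the dominated convergence theorem, so it suffices to treat the averages $\frac1N\sum_{n\le N}\Lambda_d(n)\prod_{j=1}^\ell T^{jn}F_j$. I would then apply the $W$--trick: fixing a large $W$ divisible by $d$ and decomposing the range of $n$ into the residue classes $b\bmod W$ with $(b,W)=1$ and $b\equiv 1\bmod d$ (the remaining classes contributing $o(1)$), one rewrites the average, up to an overall constant and an $o(1)$ error, as an average over such $b$ of $\frac1{N'}\sum_{m\le N'}\Lambda_{b,W}(m)\prod_{j=1}^\ell T^{j(Wm+b)}F_j$, where $\Lambda_{b,W}(m):=\frac{\varphi(W)}{W}\Lambda(Wm+b)$ and $N'\approx N/W$, and one splits $\Lambda_{b,W}=1+(\Lambda_{b,W}-1)$.

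For the main term, writing $G_j:=T^{jb}F_j$, the contribution of the $1$ is $\frac1{N'}\sum_{m\le N'}\prod_{j=1}^\ell T^{jWm}G_j$, a linear multiple ergodic average for $T$ with the distinct nonzero integer coefficients $W,2W,\dots,\ell W$. By the Host--Kra theorem \cite{HK05} and its extension to distinct coefficients (reducing to the ergodic case through the ergodic decomposition, and using that the Host--Kra seminorms integrate over it) these averages converge in $L^2(\mu)$ and the factor $Z_{\ell-1}$ — hence a fortiori $Z_\ell$ — is characteristic. Since $Z_{\ell-1}\subseteq Z_\ell$ and $Z_{\ell-1}$ is $T$--invariant, hence $T^{jb}$--invariant, the hypothesis $\E(F_{j_0}|Z_\ell)=0$ forces $\E(G_{j_0}|Z_{\ell-1})=0$, so the main term converges in general and tends to $0$ whenever $\E(F_j|Z_\ell)=0$ for some $j$.

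For the error term $\frac1{N'}\sum_{m\le N'}(\Lambda_{b,W}(m)-1)\prod_{j=1}^\ell T^{j(Wm+b)}F_j$ I would run the generalised von Neumann argument — iterated van der Corput / Cauchy--Schwarz, legitimised for the unbounded weight $\Lambda_{b,W}$ through its pseudorandom majorant in the style of Green and Tao — so as to bound its $L^2(\mu)$ norm by $O_\ell\big(\norm{\Lambda_{b,W}-1}_{U^{\ell+1}[N']}\big)+o(1)$, uniformly over the bounded functions $F_1,\dots,F_\ell$. By the Green--Tao--Ziegler estimate $\norm{\Lambda_{b,W}-1}_{U^{\ell+1}[N']}\to 0$, so, after the usual double limit (first $N\to\infty$, then $W\to\infty$), the error term tends to $0$ unconditionally. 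Combining the three steps yields both the existence of the $L^2(\mu)$ limit and the stated characteristic factor property. I expect the error estimate to be the only genuinely hard point: it uses the full strength of the Gowers uniformity of the $W$--tricked von Mangoldt function together with the transference required to apply the generalised von Neumann inequality to an unbounded weight. The passage from primes to the von Mangoldt function, the main-term analysis, and the reduction to ergodic components are all routine once the Host--Kra theory is in hand.
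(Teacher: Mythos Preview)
Your approach is correct and is essentially the argument underlying \cite{FHK} and \cite{WZ11}, carried out directly for a general residue class by absorbing the congruence condition into the weight $\Lambda_d$ and insisting that $d\mid W$ in the $W$-trick. The paper proceeds quite differently and much more economically: it does not redo any of this analysis, but simply cites \cite{WZ11} and \cite{FHK} (together with \cite{GT10,GTZ12}) for the case $d=1$, and then reduces the general $d$ to $d=1$ by a product-system trick. One applies the $d=1$ statement to the system $T\times R$ on $X\times\Z/(d\Z)$, with $R$ the cyclic shift, and to the functions $F_1\otimes\mathbf{1}_{d\Z+1},\,F_2\otimes 1,\ldots,\,F_\ell\otimes 1$; since $\Z/(d\Z)$ is finite, $L^2$-convergence on the product restricts to each fibre, and the indicator singles out exactly the primes $p\equiv 1\pmod d$. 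For the characteristic-factor claim one checks that $\E(F|Z_\ell(T))=0$ forces $\E(F\otimes\mathbf{1}_{d\Z+1}|Z_\ell(T\times R))=0$, which is routine because $R$ is periodic. So the paper black-boxes the hard work you are reproducing. One caveat on your write-up: the bound ``$L^2$-error $\ll\|\Lambda_{b,W}-1\|_{U^{\ell+1}}$'' is not literally the Green--Tao generalised von Neumann theorem, which is stated for systems of pairwise non-proportional linear forms in several variables, whereas here the forms $m,Wm,2Wm,\ldots,\ell Wm$ in a single variable are all scalar multiples of one another; what is actually needed is the single-variable variant obtained via iterated van der Corput combined with the pseudorandom-majorant transference --- and that is precisely the content of the papers \cite{FHK,WZ11} that the paper is invoking.
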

\begin{remark}
	We note that for $\ell\geq 2$ the smaller factor $Z_{\ell-1}$ is characteristic for convergence of the previous averages, but we will not need this.
	\end{remark}
 We remark that for $d=1$ the convergence part of this result follows
	 from   \cite{WZ11} and the part about characteristic factors from  \cite{FHK}
	 (conditional to some conjectures obtained later in \cite{GT10, GTZ12}).
	The statement for general $d\in \N$  follows by using the $d=1$ case for product systems of the form
	$T\times R$ acting on $X\times \Z/(d\Z)$ with the product measure,
	where $R$ is the shift on $\Z/(d\Z)$, and for the functions
	$F_1\otimes {\bf 1}_{d\Z+1}, F_2,\ldots, F_\ell$.
	 For the statement on characteristic factors one also uses the fact that  for every  $\ell\in \N$ if $\E(F|Z_\ell(T))=0$, then also  $\E(F|Z_\ell(T\times R))=0$
	 and $\E(F\otimes {\bf 1}_{d\Z+1}|Z_\ell(T\times R))=0$.

\begin{proposition}\label{P:red1}
	If Proposition~\ref{P:ET2} holds for every ergodic nilsystem, then it holds for every  system.
	\end{proposition}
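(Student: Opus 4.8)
The plan is to perform two successive reductions: first from arbitrary systems to ergodic systems, and then from ergodic systems to ergodic nilsystems; composing these yields the claim. Two preliminary observations are used in both steps. The existence of the limits $\E_{p\in\P_d}\int\prod_{j=0}^\ell T^{pn_j}F_j\,d\mu$ is already guaranteed by the convergence assertion of Theorem~\ref{T:FHK} (in its form for arbitrary distinct integer coefficients $n_0,\dots,n_\ell$, which follows from the same inputs, e.g. the results of Wooley--Ziegler), so only the vanishing as $|n|\to\infty$ needs proof. Moreover, since $S$ has independent elements, all but finitely many $n\in S$ have pairwise distinct coordinates $0=n_0,n_1,\dots,n_\ell$, and those finitely many exceptions do not affect the limit; we therefore assume throughout that these coordinates are distinct.

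\emph{From arbitrary to ergodic systems.} Assume Proposition~\ref{P:ET2} holds for ergodic systems, let $(X,\mu,T)$ be arbitrary with $F_0,\dots,F_\ell$ as in the statement --- say $F_1$ orthogonal to the rational Kronecker factor --- and write $\mu=\int\mu_x\,d\mu(x)$ for the ergodic decomposition. The crucial point is that orthogonality to the rational Kronecker factor descends to $\mu$-almost every ergodic component. Indeed, $L^2$ of the rational Kronecker factor is the closure of the increasing union $\bigcup_{q\in\N}L^2(\mathcal{I}_q)$, where $\mathcal{I}_q:=\{A\in\CX\colon T^{-q}A=A\}$, so $\E(F_1|\mathcal{K}_{\mathrm{rat}})=0$ is equivalent to $\E(F_1|\mathcal{I}_q)=0$ for every $q$; and since the $T^q$-ergodic decomposition refines the $T$-ergodic one, the latter holds if and only if, for $\mu$-a.e.\ $x$, the analogous conditional expectations taken inside $(X,\mu_x,T)$ vanish for all $q$, i.e.\ if and only if $F_1$ is orthogonal to the rational Kronecker factor of $(X,\mu_x,T)$ for $\mu$-a.e.\ $x$. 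Applying the ergodic case to $(X,\mu_x,T)$ for these $x$, the quantities $c(n,x):=\E_{p\in\P_d}\int\prod_{j=0}^\ell T^{pn_j}F_j\,d\mu_x$ tend to $0$ as $|n|\to\infty$ in $S$; since $|c(n,x)|\le\prod_j\norm{F_j}_{L^\infty}$, the bounded convergence theorem first identifies $\int c(n,x)\,d\mu(x)$ with $\E_{p\in\P_d}\int\prod_{j=0}^\ell T^{pn_j}F_j\,d\mu$, and then (applied along any sequence in $S$ with $|n|\to\infty$) lets the limit over $n$ pass inside the integral, giving the conclusion for $(X,\mu,T)$.

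\emph{From ergodic systems to ergodic nilsystems.} Assume Proposition~\ref{P:ET2} holds for ergodic nilsystems and let $(X,\mu,T)$ be ergodic. By the characteristic factor theory behind Theorem~\ref{T:FHK} (which applies equally to arbitrary distinct integer coefficients), there is $k=k(\ell)$, independent of $n$, such that the Host--Kra factor $Z_k$ is characteristic for the averages $\E_{p\in\P_d}\prod_{j=0}^\ell T^{pn_j}F_j$; hence for each admissible $n$ we may replace every $F_j$ by $\E(F_j|Z_k)$ without altering $\E_{p\in\P_d}\int\prod_{j=0}^\ell T^{pn_j}F_j\,d\mu$, and $\E(F_1|Z_k)$ remains orthogonal to the rational Kronecker factor since the latter lies inside $Z_1\subseteq Z_k$. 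We may thus assume all $F_j$ are $Z_k$-measurable; by Theorem~\ref{T:HK}, $Z_k$ is an inverse limit of ergodic $k$-step nilsystems. Fix $\varepsilon>0$ and choose an ergodic $k$-step nilsystem $(X',m_{X'},T')$, a factor map $\pi$, and functions $F_j'\in L^\infty(m_{X'})$ with $\norm{F_j'}_{L^\infty}\le\norm{F_j}_{L^\infty}$ and $\norm{F_j-F_j'\circ\pi}_{L^1}\le\varepsilon$, chosen via the remark at the end of Section~\ref{SS:nil} so that $F_1'$ is orthogonal to the rational Kronecker factor of $X'$. Telescoping the product gives, uniformly in $n$ and $p$,
$$
\Big|\E_{p\in\P_d}\int\prod_{j=0}^\ell T^{pn_j}F_j\,d\mu-\E_{p\in\P_d}\int\prod_{j=0}^\ell (T')^{pn_j}F_j'\,dm_{X'}\Big|\le(\ell+1)\,\varepsilon .
$$
Since the assumed nilsystem case forces the second term to tend to $0$ as $|n|\to\infty$ in $S$, letting $\varepsilon\to0$ completes this reduction, and combining the two reductions proves the proposition.

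The steps requiring the most care are the descent of orthogonality to the rational Kronecker factor through the ergodic decomposition --- a measure-theoretic point relying on the description of that factor via the $T^q$-invariant $\sigma$-algebras $\mathcal{I}_q$ and on the refinement of ergodic decompositions when one passes from $T$ to $T^q$ --- together with the input that a single Host--Kra factor $Z_k$ is characteristic for the prime averages with arbitrary distinct integer coefficients, which is not the literal statement of Theorem~\ref{T:FHK} but follows from the same results. Everything else, namely the two invocations of bounded convergence and the $L^1$-telescoping estimate, is routine; the one thing to monitor is that every error bound is uniform in $n$ (and in $p$), since this is exactly what allows the limit $|n|\to\infty$ to be taken after the approximations.
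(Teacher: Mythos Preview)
Your proof is correct and follows essentially the same route as the paper: ergodic decomposition (with the descent of orthogonality to the rational Kronecker factor to almost every ergodic component) followed by the characteristic-factor and inverse-limit reductions to nilsystems. You supply more detail than the paper does---in particular the explicit description of the rational Kronecker factor via the $\sigma$-algebras $\mathcal{I}_q$ and the telescoping $L^1$-estimate---and you rightly flag that Theorem~\ref{T:FHK} is being used with arbitrary distinct integer coefficients rather than $1,\dots,\ell$, a point the paper passes over in silence; one cosmetic quibble is that your bound $(\ell+1)\varepsilon$ should carry the product of the $L^\infty$ norms as a constant factor, but this is harmless.
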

\begin{proof}
First we use an ergodic decomposition argument   to show that it suffices to  verify the statement for ergodic systems. The proof of this reduction is
the same
as the one used in the proof of Theorem~\ref{T:indequi} assuming Proposition~\ref{P:ET1}.
The only additional ingredient needed is the well known fact that if $\mu=\int\mu_x\, d\mu$ is the ergodic decomposition of the measure $\mu$ and a function $F$ is orthogonal to the rational Kronecker factor of the system $(X,\mu,T)$, then for $\mu$ almost every $x\in X$ the function $F$ is orthogonal
to the rational Kronecker factor of the system $(X,\mu_x,T)$.

Next, note  that since the set $S$ has independent elements, if $\ell\geq 2$, then  all but finitely many  $n\in S$ have distinct coordinates.
Hence,  Theorem~\ref{T:FHK}  applies, and implies that in proving \eqref{E:l} we can assume   that the system we work with is
$(Z_k,  \mu_k,T)$ for some $k\in\N$. Next, using
  Theorem~\ref{T:HK}
 and the approximation property mentioned immediately after this result, we get that in proving Proposition~\ref{P:ET2}
  we can assume that the system $(X,\mu,T)$ is an ergodic  $k$-step nilsystem.
\end{proof}

A very similar argument gives the following reduction.
\begin{proposition}\label{P:red2}
	If Proposition~\ref{P:ET1} holds for every ergodic nilsystem, then it holds for every ergodic system.
\end{proposition}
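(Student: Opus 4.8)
The plan is to imitate the proof of Proposition~\ref{P:red1} essentially verbatim, the one simplification being that, since Proposition~\ref{P:ET1} is already stated for ergodic systems, no ergodic decomposition step is needed here. So I would fix $d\in\N$, an ergodic system $(X,\mu,T)$, weakly independent sequences $a_1,\dots,a_\ell\colon\N^r\to\N$ that are jointly equidistributed in congruence classes (and set $a_0:=0$), together with functions $F_0,\dots,F_\ell\in L^\infty(\mu)$ normalized so that $\norm{F_j}_{L^\infty(\mu)}\le 1$; the data $d$, the sequences $a_1,\dots,a_\ell$, and the equidistribution hypothesis will be carried along unchanged through all the reductions.

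First I would discard a density-zero set of $n$. By weak independence, for each pair $i\ne j$ the set $\{n\in\N^r\colon a_i(n)=a_j(n)\}$ has zero density (apply the definition with the vector $e_i-e_j$), and since $a_j(n)\ge 1$ while $a_0(n)=0$, the set $Z\subset\N^r$ of those $n$ for which $a_0(n),a_1(n),\dots,a_\ell(n)$ are not pairwise distinct has zero density. Because the inner averages $\E_{p\in\P_d}\int\prod_{j=0}^\ell T^{pa_j(n)}F_j\,d\mu$ are bounded by $1$, the contribution of $n\in Z$ to the outer average $\E_{n\in\N^r}$ in \eqref{E:ET1} is negligible, and stays negligible after the modifications below; hence it is enough to treat the average restricted to $n\notin Z$, for which $0,a_1(n),\dots,a_\ell(n)$ are pairwise distinct integers. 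Next, by Theorem~\ref{T:FHK} — applied, after the obvious reindexing, to averages $\E_{p\in\P_d}\prod_j T^{pb_j}G_j$ with the $b_j$ distinct integers — there is $k=k(\ell)\in\N$, depending only on $\ell$, such that $Z_k$ is characteristic for all of these averages. So for every $n\notin Z$ the inner average is unchanged when each $F_j$ is replaced by $\E(F_j\mid Z_k)$; as conditional expectation preserves integrals and contracts $L^\infty$, both sides of \eqref{E:ET1} are unchanged and the new functions remain bounded by $1$. Since these functions and their $T$-translates are $Z_k$-measurable, the integrals in \eqref{E:ET1} can be computed in the (again ergodic) factor $(Z_k,\mu_k,T)$, and Proposition~\ref{P:ET1} is reduced to systems of this form.

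Then I would descend to a nilsystem. By Theorem~\ref{T:HK}, $(Z_k,\mu_k,T)$ is an inverse limit of ergodic $k$-step nilsystems, so by the approximation property recorded after Theorem~\ref{T:HK}, for any $\varepsilon>0$ there are an ergodic $k$-step nilsystem $(X',m_{X'},T')$, a factor map $\pi\colon Z_k\to X'$, and functions $F_j':=\E(F_j\mid X')$ with $\norm{F_j'}_\infty\le 1$ and $\norm{F_j-F_j'\circ\pi}_{L^1(\mu_k)}\le\varepsilon$ (a single $X'$ works for all $j$ by the inverse limit structure). Telescoping and using multilinearity, the left-hand side of \eqref{E:ET1} for $(Z_k,\mu_k,T)$ and the $F_j$ differs by at most $(\ell+1)\varepsilon$ from the corresponding quantity for $(X',m_{X'},T')$ and the $F_j'$, whereas the right-hand sides coincide since $\int F_j'\,dm_{X'}=\int F_j'\circ\pi\,d\mu_k=\int F_j\,d\mu_k$. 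Granting Proposition~\ref{P:ET1} for the ergodic nilsystem $(X',m_{X'},T')$, those two quantities are equal and all the limits involved exist; letting $\varepsilon\to 0$ then yields \eqref{E:ET1} for $(Z_k,\mu_k,T)$, the existence of the limit $\E_{n\in\N^r}$ following because the relevant $n$-averages differ uniformly by at most $(\ell+1)\varepsilon$ from $n$-averages whose limit exists.

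The step I expect to require genuine care is the uniformity of the characteristic-factor index $k$: one needs $k$ to depend only on $\ell$ and not on the numbers $a_1(n),\dots,a_\ell(n)$, which range over all of $\N$ as $n$ varies, for otherwise the passage to a single factor $(Z_k,\mu_k,T)$ would break down. This uniformity is precisely what Theorem~\ref{T:FHK} provides after reindexing, reflecting that the Host--Kra complexity of a family of $\ell$ distinct linear forms in $p$ is bounded purely in terms of $\ell$; the remaining ingredients — the density-zero removal, the telescoping estimate, and the invariance of integrals under conditional expectations and nilfactor projections — are routine.
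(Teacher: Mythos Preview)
Your proposal is correct and follows essentially the same route as the paper: discard the density-zero set of $n$ where the $a_j(n)$ are not distinct (using weak independence), invoke Theorem~\ref{T:FHK} to pass to $Z_k$, and then descend to an ergodic nilsystem via Theorem~\ref{T:HK} and the $L^1$-approximation property. The paper's own proof is a two-line sketch deferring to Proposition~\ref{P:red1}, whereas you have spelled out the telescoping and limit-existence details explicitly, but the underlying argument is the same.
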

\begin{proof}
First note  that since the collection of sequences $a_1,\ldots, a_\ell$ is weakly independent,  if  $\ell\geq 2$, then for  all $n\in \N^r$ outside a set of density $0$
	the values  $a_1(n),\ldots, a_\ell(n)$ are distinct. Hence,  Theorem~\ref{T:FHK}   applies  and shows that  in proving \eqref{E:ET1} we can assume  that the system we work with is
	$(Z_k,  \mu_k,T)$ for some $k\in \N$. We conclude the reduction as in the previous proposition.
	\end{proof}
	
\subsection{Proof of Propositions~\ref{P:ET2} and \ref{P:ET1}}
Our plan is to first prove Propositions~\ref{P:ET2} and \ref{P:ET1} in the case where the nilmanifold is Abelian and subsequently deal with the  non-Abelian case.
\begin{lemma}\label{L:Abelian}
Propositions~ \ref{P:ET2} and \ref{P:ET1} hold if $(X,\mu,T)$ is a rotation  on a compact Abelian Lie group with the Haar measure.
\end{lemma}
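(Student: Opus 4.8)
The plan is to diagonalise everything in the dual group. Write the system as $(G,m_G,x\mapsto x+\beta)$, where $G$ is a compact abelian Lie group and $m_G$ is the Haar measure; every character $\chi\in\wh G$ is an eigenfunction with $T\chi=\chi(\beta)\chi$, the closed linear span of all characters is $L^2(m_G)$, and (as the computation $F(x+\beta)=\omega F(x)\Rightarrow \wh F(\chi)=0$ unless $\chi(\beta)=\omega$ shows) the rational Kronecker factor is precisely $\overline{\operatorname{span}}\{\chi:\chi(\beta)\text{ is a root of unity}\}$. Since the shifts $T^m$ act isometrically on $L^2(m_G)$, since the left‑hand sides of \eqref{E:l} and \eqref{E:ET1} and the right‑hand side of \eqref{E:ET1} are multilinear and uniformly bounded in $(F_0,\dots,F_\ell)$, and since a Fejér approximant of a function orthogonal to the rational Kronecker factor is still orthogonal to it (it only damps Fourier coefficients), it suffices to prove both statements — including the assertion that the limits exist — when each $F_j$ is a single character $\chi_j$, with the extra information, in the setting of Proposition~\ref{P:ET2}, that $\chi_{j_0}(\beta)$ is not a root of unity for some $j_0$. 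Writing $\chi_j(\beta)=e^{2\pi i\alpha_j}$ with $\alpha_j\in\R/\Z$ and using $n_0=0$ (respectively $a_0=0$), one has
$$
\int \prod_{j=0}^\ell T^{pn_j}\chi_j\,d\mu=\one_{\{\prod_{j=0}^\ell\chi_j=1\}}\cdot e^{2\pi i\,p\sum_{j=1}^\ell n_j\alpha_j},
$$
and the analogous identity with $a_j(n)$ in place of $n_j$.

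For Proposition~\ref{P:ET2} the only case that needs work is $\prod_{j=0}^\ell\chi_j=1$; then $\chi_0(\beta)=\prod_{j=1}^\ell\chi_j(\beta)^{-1}$, so the hypothesis forces $\alpha_{j_1}\notin\Q$ for some $j_1\in\{1,\dots,\ell\}$. Put $\theta_n:=\sum_{j=1}^\ell n_j\alpha_j$, so the quantity to control is $\E_{p\in\P_d}e^{2\pi i p\theta_n}$. If $\theta_n\in\Q$ then $n$ lies in $K:=\{m\in\Z^\ell:\sum_j m_j\alpha_j\in\Q\}$; since $\alpha_{j_1}\notin\Q$, $K$ is not of finite index in $\Z^\ell$ (otherwise it would contain $N\Z^\ell$, forcing every $\alpha_j\in\Q$), hence $K$ has rank $\le\ell-1$ and so $K\subset\{m:k\cdot m=0\}$ for some $k\in\Z^\ell\setminus\{0\}$; because $S$ has independent elements this holds for only finitely many $n\in S$. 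For all other $n\in S$ we have $\theta_n\notin\Q$, whence $\E_{p\in\P_d}e^{2\pi i p\theta_n}=0$ by equidistribution of $(p\theta_n)_{p\in\P_d}$ (the prime number theorem in arithmetic progressions together with Vinogradov's estimate for $\sum_{p\le N}\chi(p)e^{2\pi i p\theta}$, $\theta$ irrational); for the finitely many exceptional $n$ the limit exists since $e^{2\pi i p\theta_n}$ depends only on $p$ modulo the order of $e^{2\pi i\theta_n}$. This gives \eqref{E:l}. (If $\prod_j\chi_j\neq1$ the integrand is identically $0$.)

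For Proposition~\ref{P:ET1} the system is ergodic, so $\beta$ generates a dense subgroup and $\chi\mapsto\chi(\beta)$ is injective on $\wh G$. If every $\chi_j$ is trivial both sides of \eqref{E:ET1} equal $1$; if $\prod_j\chi_j\neq1$ the left side vanishes and the right side $\prod_{j=0}^\ell\int\chi_j\,dm_G=\prod_{j=0}^\ell\one_{\{\chi_j=1\}}$ also vanishes, as the $\chi_j$ cannot all be trivial. In the remaining case $\prod_j\chi_j=1$ with some $\chi_j$ nontrivial, injectivity yields $j_1\in\{1,\dots,\ell\}$ with $\alpha_{j_1}\neq0$, and with $\theta_n:=\sum_{j=1}^\ell a_j(n)\alpha_j$ we must show $\E_{n\in\N^r}\E_{p\in\P_d}e^{2\pi i p\theta_n}=0$. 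If some $\alpha_j\notin\Q$ ($j\ge1$), the rank argument above shows $\theta_n\in\Q$ implies $\sum_j k_j a_j(n)=0$ for a fixed $k\in\Z^\ell\setminus\{0\}$, an event of density $0$ by weak independence; so $n\mapsto\E_{p\in\P_d}e^{2\pi i p\theta_n}$ is bounded and supported on a density‑zero set and its $\N^r$‑average is $0$. If instead all $\alpha_j\in\Q$, write $\alpha_j=c_j/q$ with common denominator $q$ and $q\nmid c_{j_1}$; then $\E_{p\in\P_d}e^{2\pi i p\theta_n}=\sum_{a\in\Z/q\Z}w_a\,e^{\frac{2\pi i}{q}\sum_j(ac_j)a_j(n)}$, where $w_a$ is the density inside $\P_d$ of primes lying in the residue class $a\bmod q$. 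Taking $\E_{n\in\N^r}$ and using joint equidistribution in congruence classes, each term with $q\nmid ac_j$ for some $j$ contributes $0$; a term with $q\mid ac_j$ for all $j$ has in particular $q\mid ac_{j_1}$, hence $\gcd(a,q)>1$, hence $w_a=0$ (a residue class modulo $q$ with $\gcd(a,q)>1$ contains at most one prime). Therefore the double average is $0$, which is \eqref{E:ET1}.

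I expect the main obstacle to be this last case, in which all the eigenvalues $\chi_j(\beta)$ are roots of unity: there the equidistribution of $(p\theta)_p$ is useless and cancellation must instead be extracted from the tension between the constraint $p\equiv1\pmod d$ and the modulus $q$ coming from the rational frequencies — concretely, the vanishing of $w_a$ whenever $\gcd(a,q)>1$ — and keeping track simultaneously of $d$, $q$ and the integers $c_j$ is the least automatic part. The only other point requiring care is the reduction of the first paragraph: one must verify that Fejér approximation preserves orthogonality to the rational Kronecker factor and that the resulting approximation of the averages is uniform in $n$ (which it is, the shifts being $L^2$‑isometries), so that for general $F_j$ the limits exist and coincide with what the single‑character computation gives.
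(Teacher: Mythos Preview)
Your proof is correct and follows essentially the same route as the paper's: reduce to characters, use equidistribution of $(p\theta_n)_{p\in\P_d}$ when $\theta_n$ is irrational, and in the purely rational case for Proposition~\ref{P:ET1} exploit that primes avoid residue classes not coprime to the modulus together with joint equidistribution in congruence classes. The only cosmetic differences are that the paper writes $X=\Z_u\times\T^v$ explicitly and uses the rational independence of $1,\alpha_1,\ldots,\alpha_v$ to extract a single linear relation on $n$ directly (where you argue via the rank of the subgroup $K$), and in the rational case the paper swaps the two averages where you expand the $p$-average as a weighted sum over residues---these are the same computation.
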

\begin{proof}
	Suppose that $T$ is an ergodic rotation on a compact  Abelian Lie group $X$. Then  $X=\Z_u\times \T^v$ for some $u\in \N$ and $v\in \Z_+$, where $\Z_u:=\Z/(u\Z)$. Moreover, we can assume that
	$$
	T(x,y)=(x+1,y+\alpha), \quad
x\in \Z_u,\,  y\in \T^v,
$$   where addition is taken $\! \! \mod{u}$ on the first coordinate, and $\alpha=(\alpha_1,\ldots, \alpha_v)$  acts ergodically on $\T^v$, or equivalently, $1,\alpha_1,\ldots, \alpha_v$ are rationally independent. Note also that    $\mu=m_X=m_{\Z_u}\times m_{\T^v}$.

First, we prove  Proposition~\ref{P:ET2}.
By approximation in $L^2(\mu)$  it suffices to verify \eqref{E:l} when $$
F_j(x,y):=e\big(k_j\frac{x}{u}\big) \, e(l_j\cdot y),\quad  k_j\in\Z,\,  l_j\in\Z^v, \, j=0,\ldots, \ell,
$$ where $x\in \Z_u, y\in \T^v,$ and $e(t):=e^{2\pi i t}$ for $t\in \R$. Furthermore, since  at least one of the functions $F_0,\ldots, F_\ell$  is orthogonal to the rational Kronecker factor of the system, we can assume  that $l_j\neq 0$ for some $j\in \{0,\ldots, \ell\}$. If $l_1=\cdots=l_\ell=0$, then $l_0\neq 0$,
and in this case (recall that $n_0=0$)
$$
\int  \prod_{j=0}^\ell T^{pn_j}F_j\, d\mu=0
$$ for every $n_1,\ldots,n_\ell\in\N$, so \eqref{E:l} clearly holds.
Suppose now  that $l_j\neq 0$ for some $j\in\{1,\ldots,\ell\}$. Without loss  of generality we can assume that $l_1\neq 0$.

Note that the limit in \eqref{E:l} is equal to
\begin{equation}\label{E:lim}
\lim_{|n|\to\infty, n\in S} \Big(\E_{p\in\P_d} \int \int  \prod_{j=0}^\ell e\Big(k_j\frac{x+pn_j}{u}\Big) \, e\big(l_j\cdot \big(y+pn_j \alpha\big)\big)\, dm_{\T^v}(y) \, dm_{\Z_u}(x)\Big).
\end{equation}
 For $n=(n_1,\ldots, n_\ell)\in \N^\ell$ let $\beta_n\in\T$ be defined by
$$
\beta_n:=\sum_{j=1}^\ell n_j \, (l_j \cdot \alpha) +\sum_{j=1}^\ell \frac{k_j n_j}{u}.
$$
Since the set $S$ has independent elements, $\alpha=(\alpha_1,\ldots, \alpha_v)$ is such that $1,\alpha_1,\ldots, \alpha_v$ are rationally independent,  and $l_1\neq 0$,
an easy computation shows that $\beta_n$ is irrational
   for all but finitely many $n\in S$.  For those values of $n\in S$, it is well known that the
 sequence  $(p\beta_n)_{p\in \P_d}$ is equidistributed on $\T$. Hence, the bounded convergence theorem gives that the averages $\E_{\P_d}$  in  \eqref{E:lim} are $0$ for all but finitely many $n\in S$, and as a consequence
\eqref{E:l} holds.

Next, we prove  Proposition~\ref{P:ET1}.
Again by  approximation  in $L^2(\mu)$, it suffices to verify \eqref{E:ET1} when $$
F_j(x,y):=e\big(k_j\frac{x}{u}\big) \, e(l_j\cdot y),\quad  k_j\in\Z,\,  l_j\in\Z^v,\,  j=0,\ldots, \ell,
$$
 where $x\in \Z_u$ and $y\in \T^v$.
Suppose first  that at least one of the $l_0,\ldots, l_\ell$ is non-zero. As before, we can assume that $l_1\neq 0$. Then the right hand side in \eqref{E:ET1} is $0$ and the left hand side
 is equal to
\begin{equation}\label{E:lim'}
\E_{n\in\N^r}\E_{p\in\P_d} \int \int \prod_{j=0}^\ell e\Big(k_j\frac{x+pa_j(n)}{u}\Big) \, e\Big(l_j\cdot \big(y+pa_j(n)\alpha\big)\Big)\, dm_{\T^v}(y) \, dm_{\Z_u}(x).
\end{equation}
As before,   we argue that for all $n\in \N^r$ outside a set of density $0$, the averages $\E_{p\in \P_d}$ are $0$ and as a consequence \eqref{E:ET1}  holds.

Suppose now that $l_0=\cdots=l_\ell=0$.
If  all $k_0,\ldots, k_\ell$ are multiples of $u$, then \eqref{E:ET1}  holds trivially. If not, as before, we can assume that  $k_1$ is not a multiple of $u$. Then the right hand side in \eqref{E:ET1} is $0$ and the left hand side
is equal to
$$
\E_{n\in\N^r}\E_{p\in\P_d}  \int \prod_{j=0}^\ell e\Big(k_j\frac{x+pa_j(n)}{u}\Big)  dm_{\Z_u}(x).
$$
If $\sum_{j=0}^\ell k_j$ is not a multiple of $u$, then all the integrals are $0$. Otherwise,
the expression becomes
$$
\E_{n\in\N^r}\E_{p\in\P_d}  \,   e\Big(\sum_{j=1}^\ell k_ja_j(n)\frac{p}{u}\Big).
$$
The average over $p$ is finite (it is equal to the average over those $k\in \{0,\ldots, u-1\}$ that satisfy $(k,u)=1$ and $k\in d\N+1$), hence we can  freely exchange  the two averages and we get the expression
$$
\E_{p\in\P_d}\E_{n\in\N^r}  \,   e\Big(\sum_{j=1}^\ell k_ja_j(n)\frac{p}{u}\Big).
$$
If the last expression is non-zero, then there exists $p\in \{1,\ldots, u-1\}$ such that
$$
\E_{n\in\N^r}  \,   e\Big(\sum_{j=1}^\ell k_ja_j(n)\frac{p}{u}\Big)\neq 0.
$$
Since $k_1$ is not a multiple of $u$, this contradicts our assumption that the sequences $a_1,\ldots, a_\ell$ are jointly equidistributed in congruence classes and completes the proof.
\end{proof}


 The next result follows from Theorem~7 and Corollary~8 of Chapter 15 in \cite{HK18}  (see also \cite[Theorem~5.4]{BHK05}) and
  was  first established in a slightly different form in \cite{Zi05}.
\begin{lemma}\label{L:keyinv}
Let $\ell,   n_1,\ldots, n_\ell\in\N$.      Let $X=G/\Gamma$ be a   nilmanifold  and  $b\in G$ be an ergodic nilrotation. Then for $m_X$-almost every $x\in X$ the sequence
  $$
  (b^{n_1m} x,\ldots,  b^{n_\ell m} x)_{m\in \N}
  $$ is equidistributed on a set $W_x=H\cdot \tilde{x}$, where $\tilde{x}:=(x,\ldots, x)$ and
   $H$ is a subgroup of $G^\ell$  such that
$(g^{n_1},\ldots, g^{n_\ell})\in H$ for every $g\in G$.
\end{lemma}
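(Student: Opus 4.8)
The plan is to pass to the product nilmanifold $\tilde{X}=G^\ell/\Gamma^\ell$ and apply the equidistribution theory for linear orbits on nilmanifolds (Leibman; \cite[Ch.~15, Thm.~7 and Cor.~8]{HK18}, see also \cite[Thm.~5.4]{BHK05} and \cite{Zi05}). Set $\tilde{G}:=G^\ell$, $\tilde{\Gamma}:=\Gamma^\ell$, $\tilde{X}:=\tilde{G}/\tilde{\Gamma}$ (identified with $X^\ell$), $a:=(b^{n_1},\ldots,b^{n_\ell})\in\tilde{G}$, and for $g\in G$ let $\sigma(g):=(g^{n_1},\ldots,g^{n_\ell})\in\tilde{G}$. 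The map $\sigma$ is continuous (but not a homomorphism), $a^m=\sigma(b^m)=\sigma(b)^m$, and $(b^{n_1m}x,\ldots,b^{n_\ell m}x)=a^m\tilde{x}=\sigma(b^m)\,\tilde{x}$ for every $m$. Since $a$ is invertible, the forward orbit closure of $\tilde{x}$ coincides with the full one, so the task is to show that the linear orbit $(a^m\tilde{x})_{m}$ equidistributes, for $m_X$-a.e.\ $x$, on a sub-nilmanifold $H\tilde{x}$ with $H\le\tilde{G}$ a closed subgroup independent of $x$ and satisfying $\sigma(G)\subseteq H$.

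Write $\tilde{x}=\tilde{g}\,\tilde{\Gamma}$ with $\tilde{g}=(g,\ldots,g)$. From $a^m\tilde{g}\,\tilde{\Gamma}=\tilde{g}\,(\tilde{g}^{-1}a\tilde{g})^m\,\tilde{\Gamma}$ together with the identity $\tilde{g}^{-1}a\tilde{g}=\sigma(b')$, where $b':=g^{-1}bg$ is again an ergodic nilrotation with $G=\langle G^0,b'\rangle$, it suffices to analyse the orbit of $\sigma(b')^m$ at the base point $e_{\tilde{X}}=\tilde{\Gamma}$. By Leibman's theorem this orbit equidistributes on a sub-nilmanifold $H'e_{\tilde{X}}$ with $H'\le\tilde{G}$ closed and $\sigma(b')\in H'$, and the structural refinement in the cited works --- which is the step that uses the ergodicity of $b'$ --- identifies $H'$ with the closed subgroup $\overline{\langle\sigma(G)\rangle}$ generated by $\sigma(G)$. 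Conjugating back, the orbit of $a^m$ at $\tilde{x}$ equidistributes on $\tilde{g}\,\overline{\langle\sigma(G)\rangle}\,\tilde{\Gamma}=(\tilde{g}\,\overline{\langle\sigma(G)\rangle}\,\tilde{g}^{-1})\tilde{x}$; and since $\tilde{g}\,\sigma(h)\,\tilde{g}^{-1}=\sigma(ghg^{-1})$ and $gGg^{-1}=G$, we get $\tilde{g}\,\overline{\langle\sigma(G)\rangle}\,\tilde{g}^{-1}=\overline{\langle\sigma(G)\rangle}=:H$. Hence $W_x=H\tilde{x}$ with the \emph{same} closed subgroup $H$ for every $x$, and $\sigma(G)\subseteq H$ is precisely the assertion that $(g^{n_1},\ldots,g^{n_\ell})\in H$ for all $g\in G$.

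The only substantial ingredient is the identification $H'=\overline{\langle\sigma(G)\rangle}$ at the base point, i.e.\ the lower bound that the orbit $(\sigma(b')^m e_{\tilde{X}})_m$ is \emph{dense} in $\overline{\langle\sigma(G)\rangle}\,e_{\tilde{X}}$ rather than concentrating on a proper sub-nilmanifold; this is exactly the content of \cite[Ch.~15, Cor.~8]{HK18} (and \cite[Thm.~5.4]{BHK05}, \cite{Zi05}). Its proof goes by induction on the nilpotency class: a van der Corput estimate reduces the matter to the maximal torus factor of the ambient sub-nilmanifold, on which the orbit equidistributes over $\sigma$ of that torus because $b'$ induces an ergodic rotation on the Kronecker factor of $X$ and $G=\langle G^0,b'\rangle$ takes care of the disconnected part; one then climbs the lower central series, ruling out at each stage that a non-trivial vertical nilcharacter of a proper intermediate sub-nilmanifold remains correlated with the orbit. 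Granting this input, the rest --- including the passage to ``$m_X$-a.e.\ $x$'', which is immediate here since the conjugation argument yields the same $H$ for \emph{every} $x$ --- is routine.
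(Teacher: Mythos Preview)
The paper does not give its own proof of this lemma; it simply invokes \cite[Ch.~15, Thm.~7 and Cor.~8]{HK18} (and \cite[Thm.~5.4]{BHK05}, \cite{Zi05}) as a black box.  Your write-up, however, attempts an argument and that argument has a real gap.

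The problem is your conjugation-to-the-base-point step together with the claim that for \emph{every} ergodic $b'$ the orbit $(\sigma(b')^m e_{\tilde X})_m$ equidistributes on $\overline{\langle\sigma(G)\rangle}\,e_{\tilde X}$.  If this were true, your argument would yield the conclusion of the lemma for \emph{every} $x\in X$, not merely $m_X$-almost every $x$; but the paper explicitly warns in the remark following the lemma that the conclusion fails for some $x$ when $s\ge 2$.  The cited references establish the almost-everywhere statement directly --- they do \emph{not} assert that the base-point orbit under an arbitrary ergodic $\sigma(b')$ fills $\overline{\langle\sigma(G)\rangle}\,e_{\tilde X}$, and your inductive sketch in the last paragraph does not repair this.

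A concrete counterexample: take $G$ the Heisenberg group, $\ell=2$, $(n_1,n_2)=(1,2)$, and write $b'=(\alpha,\beta,\gamma')$ in the usual coordinates.  One computes that the center coordinates of $((b')^m,(b')^{2m})$ are degree-$2$ polynomials in $m$, and for the pair $(k_1,k_2)=(-4,1)$ the resulting linear combination has leading coefficient $0$ and linear coefficient a rational multiple of $\alpha\beta-2\gamma'$.  Thus whenever $\alpha\beta-2\gamma'\in\mathbb{Q}$ the orbit at $e_{\tilde X}$ fails to equidistribute on $\overline{\langle\sigma(G)\rangle}\,e_{\tilde X}$.  Since $b'=g^{-1}bg$ has $\gamma'=\gamma+\alpha q-\beta p$ (with $g=(p,q,r)$), this bad set of $b'$'s corresponds exactly to a measure-zero set of $x=g\Gamma$ --- the exceptional set that the ``a.e.'' in the lemma must exclude.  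So the identification $H'=\overline{\langle\sigma(G)\rangle}$ at the base point is simply not available pointwise, and the clean conjugation trick cannot replace the genuine almost-everywhere argument in the cited sources.
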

\begin{remark}
We caution the reader  that although for Abelian nilmanifolds the conclusion holds for every $x\in X$ this is not so for $s$-step nilmanifolds when $s\geq 2$.
\end{remark}
The previous lemma is used in order to establish the following result:
\begin{lemma}\label{L:orthogonality}
		Let  $\ell\in\N$ and 	 $S$ be a subset of $\N^\ell$ with independent elements.
 Let $s\in \N$,  with  $s\geq 2$, $X=G/\Gamma$ be an $s$-step nilmanifold,  $b\in G$ be an ergodic nilrotation, and   $\Phi_1, \ldots, \Phi_\ell$ be  $s$-step nilcharacters of $X$, at least one of which is non-trivial.
   Then for all but finitely many $(n_1,\ldots, n_\ell)\in S$ we have
\begin{equation}\label{E:keypp'}
\E_{m\in\N} \prod_{j=1}^\ell \Phi_j(b^{mn_j}\, x)=0
\end{equation}
for  $m_X$-almost every $x\in X$.
\end{lemma}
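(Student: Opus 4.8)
The plan is to reduce the averages over $m$ to an integral over the orbit closures furnished by Lemma~\ref{L:keyinv}, and then to exploit the centrality of $G_s$ together with the independence of $S$. Fix $n=(n_1,\dots,n_\ell)\in S$. Since $b$ acts ergodically we may, throughout, use a representation $X=G/\Gamma$ in which $G_s$ is a connected subgroup and $K_s:=G_s/(G_s\cap\Gamma)$ is a finite-dimensional torus. As each $\Phi_j$ is continuous, the map $(w_1,\dots,w_\ell)\mapsto\prod_{j=1}^\ell\Phi_j(w_j)$ is continuous on $X^\ell$, so Lemma~\ref{L:keyinv} gives, for $m_X$-almost every $x\in X$,
\[
\E_{m\in\N}\prod_{j=1}^\ell\Phi_j(b^{mn_j}x)=\int_{W_x}\prod_{j=1}^\ell\Phi_j(w_j)\,dm_{W_x}(w),
\]
where $W_x=H\cdot\tilde x$ with $\tilde x=(x,\dots,x)$, $m_{W_x}$ is the unique $H$-invariant probability measure on $W_x$, and $H\leq G^\ell$ satisfies $(g^{n_1},\dots,g^{n_\ell})\in H$ for every $g\in G$. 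It therefore suffices to prove that the right-hand integral vanishes for all but finitely many $n\in S$.

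For this I would use the left $H$-invariance of $m_{W_x}$. Let $\chi_j\in\wh{K_s}$ be the frequency of $\Phi_j$, so $\Phi_j(u\cdot y)=\chi_j(u)\,\Phi_j(y)$ for $u\in G_s$ and $y\in X$. Given $v\in G_s$, the element $h:=(v^{n_1},\dots,v^{n_\ell})$ lies in $H$ (apply the containment above with $g=v\in G_s\subseteq G$), and since $v^{n_j}\in G_s$ and $\chi_j$ is a character, the change of variables $w\mapsto hw$ multiplies the integrand by the scalar $\prod_{j=1}^\ell\chi_j(v^{n_j})=\prod_{j=1}^\ell\chi_j(v)^{n_j}$; hence
\[
\int_{W_x}\prod_{j=1}^\ell\Phi_j(w_j)\,dm_{W_x}(w)=\Big(\prod_{j=1}^\ell\chi_j(v)^{n_j}\Big)\int_{W_x}\prod_{j=1}^\ell\Phi_j(w_j)\,dm_{W_x}(w)
\]
for every $v\in G_s$. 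Consequently the integral is $0$ whenever the character $\chi_1^{n_1}\cdots\chi_\ell^{n_\ell}$ of $K_s$ is non-trivial.

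It remains to check that $\chi_1^{n_1}\cdots\chi_\ell^{n_\ell}$ is non-trivial for all but finitely many $n\in S$, and this is where the hypothesis that $S$ has independent elements enters. Identify $\wh{K_s}$ with $\Z^d$ (here $d\geq1$, since after relabeling $\Phi_1$ is non-trivial and so $K_s$ is a non-trivial torus); then $\chi_1^{n_1}\cdots\chi_\ell^{n_\ell}$ corresponds to $\sum_{j=1}^\ell n_j\chi_j\in\Z^d$. Since $\chi_1\neq0$, choose a coordinate $i$ with $(\chi_1)_i\neq0$ and set $k:=((\chi_1)_i,\dots,(\chi_\ell)_i)\in\Z^\ell$; then $k\neq0$, and $\sum_{j=1}^\ell n_j\chi_j=0$ forces $k\cdot n=0$. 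As $S$ has independent elements, $k\cdot n=0$ has only finitely many solutions in $S$, so $\chi_1^{n_1}\cdots\chi_\ell^{n_\ell}$ is non-trivial outside a finite subset $F\subseteq S$ depending only on $\chi_1,\dots,\chi_\ell$; for every $n\in S\setminus F$ the displayed integral vanishes, and therefore \eqref{E:keypp'} holds for $m_X$-almost every $x$. The only steps requiring a little care are the verification that $m_{W_x}$ is genuinely left $H$-invariant (so that the substitution $w\mapsto hw$ is legitimate) and keeping track that the exceptional set $F$ is independent of $x$; the conceptual heart is simply extracting the non-zero integer vector $k$ from the non-vanishing of the combined vertical frequency, so I do not anticipate a serious obstacle.
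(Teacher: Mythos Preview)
Your argument is correct and follows essentially the same route as the paper: apply Lemma~\ref{L:keyinv} to replace the time average by an integral over $W_x=H\cdot\tilde x$, then translate by $(v^{n_1},\dots,v^{n_\ell})\in H$ for $v\in G_s$ to force the integral to vanish unless the combined vertical frequency $\chi_1^{n_1}\cdots\chi_\ell^{n_\ell}$ is trivial, and finally use the independence hypothesis on $S$ to show this triviality occurs for only finitely many $n$. The paper carries this out in explicit torus coordinates ($G_s\cong\T^t$, $\chi_j\leftrightarrow k_j\in\Z^t$) whereas you phrase it more abstractly via $\wh{K_s}\cong\Z^d$, but the content is identical; your two ``care'' points are handled just as in the paper (the Haar measure on the sub-nilmanifold $W_x$ is left $H$-invariant, and the exceptional set depends only on the frequencies $\chi_j$, not on $x$).
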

\begin{proof}
Henceforth, with $n_1,\ldots, n_\ell$ we denote the coordinates of $n\in \N^\ell$.
By Lemma~\ref{L:keyinv} there exists $X'\subset X$ with  $m_X(X')=1$ such that for every  $x\in X'$ and   every $n\in \N^\ell$ we have
$$
\E_{m\in\N} \prod_{j=1}^\ell \Phi_j(b^{mn_j}\, x)=\int\prod_{j=1}^\ell\Phi_j(x_j)\, dm_{W_{x,n}}(x_1,\ldots, x_\ell),
$$
where $W_{x,n}=H_n\, \tilde{x}$,  $\tilde{x}:=(x,\ldots, x)$, and
   $H_n$ is a subgroup of $G^\ell$  such that
$(g^{n_1},\ldots, g^{n_\ell})\in H_n$ for every $g\in G$. In particular, for every $x\in X'$, we have that
 \begin{equation}\label{E:invariance'}
 (u^{n_1},\ldots,  u^{n_\ell})\cdot W_{x,n}=W_{x,n}   \qquad \text{for every }  n\in \N^\ell \text{ and } u\in G_s.
\end{equation}

Since $s\geq 2$, as remarked in Section~\ref{SS:nil}
 we can assume that $G_s=\T^t$ for some $t\in \N$ ($G_s$ is non-trivial since $X$ supports a non-trivial $s$-step nilcharacter). Henceforth, we use additive notation for elements of $G_s$.
By assumption, for $j=1,\ldots, \ell$ we have
\begin{equation}\label{E:nilchar}
\Phi_j(ux)=e(k_j\cdot u)\, \Phi_j(x), \quad x\in X, u\in G_s=\T^t
\end{equation}
where $k_j\in \Z^t$. Since at least one of the nilcharacters is non-trivial, we can assume that $\Phi_1$ is, or equivalently, that $k_1\in \Z^t$ is non-zero.
Using this, the translation invariance of the measure $m_{W_{x,n}}$,  and \eqref{E:invariance'}, \eqref{E:nilchar},  we deduce that
for every $x\in X'$ and $n\in \N^\ell$ we have
 \begin{equation}\label{E:intzero}
 \int\prod_{j=1}^\ell\Phi_j(x_j)\, dm_{W_{x,n}}(x_1,\ldots, x_\ell)=0
 \end{equation}
 unless $\sum_{j=1}^\ell n_j \, (k_j\cdot u)=0   \pmod{1}$ for every $u\in \T^t$.
 Equivalently, writing $k_j=(k_{j,1},\ldots, k_{j,t})$, $j=1,\ldots, \ell$,  and $u=(u_1,\ldots, u_t)$, we get that if the previous sum is zero, then
 $$
 \sum_{i=1}^tu_i\Big(\sum_{j=1}^\ell n_jk_{i,j}\Big)=0 \! \!  \pmod{1} \quad \text{ for all }
 u_1,\ldots, u_t\in \T.
 $$
 Hence,
 $$
 \sum_{j=1}^\ell n_jk_{i,j}=0 \quad \text{ for } i=1,\ldots, t.
 $$
 Since the integers $k_{1,1},\ldots, k_{1,\ell}$ are not all zero (recall that $k_1\neq 0$), and the  set $S$  has independent elements, we get  that the first identity of the previous system  can be satisfied  only for finitely many $n\in S$.
Hence, for all but finitely many $n\in S$,  for every $x\in X'$ equation \eqref{E:intzero} holds, and as a consequence  \eqref{E:keypp'} holds.  This completes the proof.
\end{proof}

We will also use the following result, which is proved using the Gowers uniformity of the modified von Mangoldt function~\cite{GT10}. It is proved in \cite[Theorem~4.4]{FH18} for $d=1$ but the same argument gives the proof for general $d\in \N$.
\begin{proposition}\label{P:ergid2}
Let $d,r_0\in \N$ and   $(X, \mu, T)$ be a system  such that the ergodic components of the system $(X,\mu,T^{r_0})$ are totally ergodic. Let
$$
A_{d,r_0}:=\big\{m\in \N\colon (m,r_0)=1 \ \text{ and }\  m\equiv 1\! \! \! \pmod{d}\big\}.
$$
Then  all the  limits  below exist and we have
\begin{equation}\label{E:primeform}
		\E_{p\in \P_d} \int  \prod_{j=0}^\ell T^{pj}F_j\, d\mu= \E_{m\in A_{d,r_0}} \int \prod_{j=0}^\ell T^{mj}F_j\, d\mu
	\end{equation}
	for all $\ell \in \N$ and $F_0,\ldots, F_\ell\in L^\infty(\mu)$.
\end{proposition}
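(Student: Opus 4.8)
The plan is to follow the proof of \cite[Theorem~4.4]{FH18}, which establishes the case $d=1$, and to observe that the residue restriction $m\equiv 1\!\pmod d$ is handled in exactly the same way as the coprimality restriction $(m,r_0)=1$, so that only routine bookkeeping with arithmetic progressions is added. Write $c(n):=\int\prod_{j=0}^\ell T^{nj}F_j\,d\mu$. First I would pass to the $T$-ergodic components of $\mu$, as in the proof of Theorem~\ref{T:indequi}; since the $T^{r_0}$-ergodic decomposition refines the $T$-ergodic one, the hypothesis descends to almost every component, so we may assume $(X,\mu,T)$ is ergodic with $T^{r_0}$-ergodic components totally ergodic. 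Theorem~\ref{T:FHK}, together with its routine analogue for averages along finite unions of arithmetic progressions, shows that both averages $\E_{p\in\P_d}c(p)$ and $\E_{m\in A_{d,r_0}}c(m)$ depend only on the conditional expectations $\E(F_j\mid Z_\ell)$; and because $Z_\ell$ is an inverse limit of ergodic nilsystems (Theorem~\ref{T:HK}), while the telescoping bound $\sup_n|c(n)-c'(n)|\le\sum_j\norm{F_j-F_j'}_{L^1(\mu)}$ for uniformly bounded functions lets one approximate \emph{uniformly in $n$} (this uniformity is essential, since $L^2$-small or Ces\`aro-null perturbations need not have small prime averages), we reduce to $(X,\mu,T)$ an ergodic nilsystem $G/\Gamma$, $T=T_b$, with continuous $F_j$. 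Since factors inherit the hypothesis, the $T_b^{r_0}$-ergodic components of this nilsystem are totally ergodic, and a short computation with the connected components $X^0,bX^0,\dots,b^{p-1}X^0$, $p:=|X/X^0|$, shows this forces $p\mid r_0$.

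On such a nilsystem the sequence $n\mapsto c(n)$ is a basic nilsequence (a van der Corput reduction together with Leibman's equidistribution theorem on nilmanifolds, as in \cite{BHK05}; there is no null error, since we are on an honest nilmanifold). Tracking the torsion and using $p\mid r_0$, one obtains a decomposition $c(n)=\sum_{a\,(r_0)}\one_{n\equiv a\,(r_0)}\,c_a(n)$ in which each $c_a$, after reparametrising the progression $\{n\equiv a\,(r_0)\}$, is a \emph{totally equidistributed} nilsequence on a connected nilmanifold; in particular its average along every infinite arithmetic progression equals a single constant $I_a$. Now set $q:=dr_0$ and let $R$ be the set of residues $a\!\pmod q$ with $a\equiv 1\,(d)$ and $(a,r_0)=1$ (such $a$ are automatically coprime to $q$). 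Up to finitely many elements, $\P_d$ and $A_{d,r_0}$ both lie in $\bigcup_{a\in R}\{n\equiv a\,(q)\}$ and are equidistributed among these classes — trivially for $A_{d,r_0}$, and by the prime number theorem in arithmetic progressions for $\P_d$. Splitting each average over the classes $a\in R$, inserting the decomposition above, and using for the prime average the Gowers uniformity of the modified von Mangoldt function \cite{GT10,GTZ12} (equivalently, that primes do not correlate with equidistributed nilsequences, the only surviving contribution being the rational part already isolated in the index $a$) and for $A_{d,r_0}$ the elementary equidistribution of $c_a$ along progressions, we arrive at
\[
\E_{p\in\P_d}c(p)=\frac1{|R|}\sum_{a\in R}I_a=\E_{m\in A_{d,r_0}}c(m);
\]
the same computations show that all the limits exist, which proves \eqref{E:primeform}.

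The number-theoretic inputs — the prime number theorem in arithmetic progressions and the Gowers-uniformity/prime-nilsequence estimate of \cite{GT10} — are used as black boxes, so the step I expect to be the main obstacle is the structural one: verifying that, after the reduction, the nilsequence $c(n)$ genuinely splits with rational period dividing $r_0$, so that once $n$ is fixed modulo $r_0$ (the finest arithmetic information the primes retain) what remains is totally equidistributed. This is precisely where the hypothesis on the $T^{r_0}$-ergodic components is used, and making it rigorous requires careful control of the connected components of the nilmanifolds appearing in the $Z_\ell$-reduction and of their interaction with the prolongation (diagonal-orbit) construction that realises $c(n)$ as a nilsequence.
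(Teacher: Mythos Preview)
Your proposal is correct and follows exactly the route the paper indicates: the paper does not give an independent proof of Proposition~\ref{P:ergid2} but simply states that it ``is proved in \cite[Theorem~4.4]{FH18} for $d=1$ but the same argument gives the proof for general $d\in\N$,'' invoking the Gowers uniformity of the modified von Mangoldt function from \cite{GT10}. Your sketch is precisely an unpacking of that argument---reduction to ergodic nilsystems via Theorems~\ref{T:FHK} and~\ref{T:HK}, identification of $c(n)$ as a nilsequence whose rational period divides $r_0$, and comparison of prime and integer averages via the Green--Tao theorem---with the extra congruence $m\equiv 1\pmod d$ absorbed into the same residue-class bookkeeping; so there is nothing to add.
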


\begin{proof}[Proof of Propositions~ \ref{P:ET2} and \ref{P:ET1}]
	By Propositions~\ref{P:red1} and \ref{P:red2} we can assume that the system is an ergodic nilsystem.
	So let $X=G/\Gamma$ be an $s$-step nilmanifold with the Haar measure $m_X$ and $Tx=bx$, $x\in X$, for some $b\in G$.
	
		We prove the statements by induction on $s\in \N$ .
If $s=1$, then $X$ is a compact Abelian Lie group so we are covered  by Lemma~\ref{L:Abelian}.

Suppose that $s\geq 2$ and Propositions~ \ref{P:ET2} and \ref{P:ET1} hold for all $(s-1)$-step nilsystems. Since linear combinations of $s$-step nilcharacters are dense in $C(X)$, using an approximation argument, we can assume that
for $j=0,\ldots, \ell$ we have $F_j=\Phi_j$ where $\Phi_j$ is an $s$-step nilcharacter of $X$.
If   $\Phi_0,\ldots, \Phi_\ell$ are all   trivial $s$-step nilcharacters of $X$, then they factorize through the nilmanifold
$$
X':=G/(G_s\Gamma)=(G/G_s)/((\Gamma\cap G_s)/G_s).
$$
The group $G/G_s$ is $(s-1)$-step nilpotent and $X'$ is an $(s-1)$-step nilmanifold.  So in this case the result follows from the induction hypothesis. Hence, we can assume that at least one of the $s$-step nilcharacters
$\Phi_0,\ldots, \Phi_\ell$ is non-trivial.

Suppose first that  $\Phi_0$ is a non-trivial $s$-step nilcharacter and $\Phi_1,\ldots,\Phi_\ell$ are trivial $s$-step nilcharacters.  Then for every  $n_0,\ldots, n_\ell \in\Z$ the function
$\prod_{j=0}^\ell T^{pn_j}\Phi_j$ is  also a non-trivial $s$-step nilcharacter (with the same frequency as $\Phi_0$). Hence, $\int\Phi_0(x)\, dm_X=0$ and
$$
\int \prod_{j=0}^\ell \Phi_j(T^{pn_j}x)\, dm_X=0 \quad \text{ for every } n_0,\ldots, n_\ell \in\Z.
$$
 In this case, equations \eqref{E:l} and \eqref{E:ET1}  clearly hold.

Hence,  without loss of generality, we can assume that  $\Phi_1$ is a non-trivial $s$-step nilcharacter. In this case we
 prove  Proposition~\ref{P:ET2}, the proof of Proposition~\ref{P:ET1} is very similar. We have to show that
$$
\lim_{|n|\to \infty, n\in S}\Big( \E_{p\in\P_d}\int \prod_{j=0}^\ell \Phi_j(T^{pn_j}x)\, dm_X\Big)=0.
$$
Since $(X,\mu,T)$ is an ergodic nilsystem,  there exists $r_0\in \N$ such that $T^{r_0}$ acts ergodically on $X^0$ and  the ergodic components of $(X,\mu,T^{r_0})$ are totally ergodic (see for example Corollaries 7 and 8 on page 182 in \cite{HK18}).
By Proposition~\ref{P:ergid2} it
suffices to show that for every $k\in\N$ we have
$$
\lim_{|n|\to \infty, n\in S}\Big(\E_{m\in\N} \int \prod_{j=0}^\ell \Phi_j(T^{(dr_0m+k)n_j}x)\, dm_X\Big)=0.
$$

 Recall that $n_0=0$. By the bounded convergence theorem (from  \cite{L05} the limits $\E_{m\in\N}$  exist for every $x\in X$) it suffices to show that for every $k\in\N$, for $m_X$-almost every $x\in X$, we have
$$
\lim_{|n|\to \infty, n\in S} \Big(\E_{m\in\N} \prod_{j=1}^\ell \Phi_j(T^{(dr_0m+k)n_j}x)\Big)=0.
$$
Hence, it suffices to show that for every $k\in\N$, for $m_X$-almost every $x\in X$, we have
$$
\lim_{|n|\to \infty, n\in S} \Big(\E_{m\in\N} \prod_{j=1}^\ell \Phi'_j(b'^{mn_j}\cdot x)\Big)=0,
$$
where $b':=b^{dr_0}$ and for $j=1,\ldots, \ell$ we let $\Phi_j'(x):=\Phi_j(b^{kn_j}x)$ for  $x\in X$. Then $b'$ acts ergodically on $X_0$  and as explained in Section~\ref{SS:nil}, for $j=1,\ldots, \ell$,  the restriction of $\Phi'_j$ to $X_0$ is  an $s$-step  nilcharacter of $X_0$ with same frequency as $\Phi_j$; hence, $\Phi_1$  is non-trivial. The validity of the last
identity  follows from Lemma~\ref{L:orthogonality} (here we used crucially that $s\geq 2$). This completes the proof.
\end{proof}

\end{document}